\documentclass{amsart}


\usepackage{geometry}

\usepackage[T1]{fontenc}
\usepackage[utf8]{inputenc}

\usepackage{color}
\usepackage[colorlinks=true]{hyperref}
\hypersetup{urlcolor=blue, citecolor=blue}

\usepackage{amsmath,amssymb,amsfonts,amsthm} 
\usepackage{graphicx}
\usepackage{pgfplots}
\usepackage{cleveref}
\usepackage{thmtools}
\usepackage{stmaryrd}
\usepackage{mathtools}
\usepackage{mathrsfs}  
\usepackage{enumerate}
\usepackage{dsfont}

\usepackage[normalem]{ulem} 
\usepackage{cancel}
\usepackage[color=red!65!yellow]{todonotes}

\usepackage{subfig}
\usepackage{booktabs}

\newtheorem{thm}{Theorem}[section]
\newtheorem{lem}[thm]{Lemma}
\newtheorem{prop}[thm]{Proposition}

\newcounter{cst}
\newcommand{\ctel}[1]{C_{\refstepcounter{cst}\label{#1}\thecst}}
\newcommand{\cter}[1]{C_{\ref{#1}}}

\def\ds{\displaystyle}

\def\R{\mathbb{R}}

\def\be{\begin{equation}}
\def\ee{\end{equation}}

\def\n{{\boldsymbol n}}
\def\p{\partial}
\def\grad{{\nabla}}
\def\div{\grad\cdot}

\def\O{\Omega}

\def\Sig{\Sigma}
\def\sig{\sigma}

\def\k{\kappa}

\def\eps{\epsilon}
\def\x{{\boldsymbol x}}
\def\y{{\boldsymbol y}}
\def\d{{\rm d}}
\def\ov#1{\overline{#1}}

\def\wt#1{\widetilde{#1}}

\def\bphi{{\boldsymbol{\phi}}}

\def\bA{{\boldsymbol  A}}

\def\bF{{\boldsymbol  F}}
\def\bG{{\boldsymbol  G}}
\def\bH{{\boldsymbol  H}}

\def\bJ{{\boldsymbol  J}}

\def\bM{{\boldsymbol  M}}

\def\bS{{\boldsymbol  S}}

\def\bm{{\boldsymbol m}}
\def\bu{{\boldsymbol  u}}

\def\bv{{\boldsymbol v}}

\def\bh{{\boldsymbol h}}
\def\bmu{{\boldsymbol \mu}}
\def\bxi{{\boldsymbol \xi}}
\def\0{{\bf 0}}
\def\1{{\bf 1}}
\def\bGg{{\boldsymbol{\mathcal{G}}}}

\def\bbF{{\mathbb F}}

\def\bbP{{\mathbb P}}

\def\Ee{\mathcal{E}}

\def\Gg{\mathcal{G}}

\def\Kk{\mathcal{K}}

\def\Nn{\mathcal{N}}

\def\Pp{\mathcal{P}}

\def\Tt{\mathcal{T}}

\def\Zz{\mathcal{Z}}

\def\brho{{\boldsymbol\rho}}
\def\bpi{{\boldsymbol\pi}}

\def\dt{{\Delta t}}
\def\bff{{\boldsymbol f}}

\def\rhoe{\varrho}

\begin{document}
\title{A variational finite volume scheme for Wasserstein gradient flows}
\author[C. Cancès]{Clément Cancès}
\address{Cl\'ement Canc\`es (\href{mailto:clement.cances@inria.fr}{\tt clement.cances@inria.fr}):  Inria, Univ. Lille, CNRS, 
UMR 8524 - Laboratoire Paul Painlev\'e, F-59000 Lille 
} 
\author[T. O. Gallou\"et]{Thomas O. Gallou\"et}
\address{Thomas O. Gallou\"et:  (\href{mailto:thomas.gallouet@inria.fr}{\tt thomas.gallouet@inria.fr}):  INRIA, Project team Mokaplan, Universit\'e Paris-Dauphine, PSL Research University, UMR CNRS 7534-Ceremade
} 
\author[G. Todeschi]{Gabriele Todeschi}
\address{Gabriele Todeschi:  (\href{mailto:todeschi@ceremade.dauphine.fr}{\tt thomas.gallouet@inria.fr}):  INRIA, Project team Mokaplan, Universit\'e Paris-Dauphine, PSL Research University, UMR CNRS 7534-Ceremade
} 

\begin{abstract}
We propose a variational finite volume scheme to approximate the solutions to Wasserstein gradient flows. 
The time discretization is based on an implicit linearization of the Wasserstein distance expressed thanks 
to Benamou-Brenier formula, whereas space discretization relies on upstream mobility two-point flux approximation 
finite volumes. Our scheme is based on a first discretize then optimize approach in order to preserve the 
variational structure of the continuous model at the discrete level. Our scheme can be applied to a wide range of energies, 
guarantees non-negativity of the discrete solutions as well as decay of the energy. 
We show that our scheme admits a unique solution whatever the convex energy involved in the continuous problem, 
and we prove its convergence in the case of the linear Fokker-Planck equation with positive initial density. 
Numerical illustrations show that it is first order accurate in both time and space, and 
robust with respect to both the energy and the initial profile. 
\end{abstract}
\maketitle


\section{A strategy to approximate Wasserstein gradient flows}\label{sec:intro}

\subsection{Generalities about Wasserstein gradient flows}\label{ssec:GF}

Given a convex and bounded open subset $\O$ of $\R^d$, a strictly convex and proper energy functional $\mathcal{E}: L^1(\O;\R_+) \to [0,+\infty]$,
and given an initial density $\rho^0 \in L^1(\O;\R_+)$ with finite energy, i.e. such that $\Ee(\rho^0) < +\infty$, we want to solve problems of the form:
\be\label{eq:pde}
\begin{cases}
\p_t \rhoe - \div(\rhoe \grad \frac{\delta\Ee}{\delta\rho}[\rhoe]) = 0 & \text{in } Q_T = \O\times (0,T), \\
\rhoe \grad \frac{\delta\Ee}{\delta\rho}[\rhoe] \cdot \n = 0  & \text{on } \Sigma_T = \partial \O \times (0,T), \\
 \rhoe(\cdot,0) = \rho^0 & \text{in}\; \O.
 \end{cases}
\ee
\Cref{eq:pde} expresses the continuity equation for a time evolving density $\rhoe$, starting from the initial condition $\rho^0$, convected by the velocity field $-\nabla \frac{\delta \mathcal{E}}{\delta \rho}[\rhoe]$. The mixed boundary condition the system is subjected to represents a no flux condition across the boundary of the domain for the mass: the total mass is therefore preserved.

It is now well understood since the pioneering works of Otto~\cite{JKO98, Otto98, Otto01} that equations of the form of \eqref{eq:pde} can be interpreted as the 
gradient flow in the Wasserstein space w.r.t. the energy $\mathcal{E}$ \cite{AGS08}. A gradient flow is an evolution stemming from an initial condition and evolving at each time following the steepest decreasing direction of a prescribed functional.
Consider the space $\mathbb{P}(\O)$ of nonnegative measures defined on the bounded and convex domain $\Omega$ with prescribed total mass that are absolutely continuous w.r.t. the Lebesgue measure (hence $\mathbb{P}(\O) \subset L^1(\O;\R_+)$). 
The Wasserstein distance $W_2$ between two densities $\rho,\mu \in \mathbb{P}(\O)$ is the cost to transport one into the other in an optimal way with respect to the cost given by the squared euclidean distance, namely the optimization problem
\begin{equation}\label{W2}
W_2^2(\rho,\mu) = \displaystyle \min_{\gamma \in \Gamma(\rho,\mu)} \iint_{\Omega\times \Omega} |\y-\x|^2 d\gamma(\x,\y),
\end{equation}
with the set $\Gamma(\rho,\mu)$ of admissible transport plans given by
\[
\Gamma(\rho,\mu) = \Big\{ \gamma \in \mathbb{P}(\O\times\O) : \gamma^1 = \rho, \gamma^2=\mu \Big\},
\]
where $\gamma^1, \gamma^2$ denote the first and second marginal measure, respectively. 

A typical example of problem entering the framework of~\eqref{eq:pde} is the 
linear Fokker-Planck equation
\begin{equation}\label{eq:FokkerPlanck}
\partial_t \rhoe = \Delta \rhoe + \nabla \cdot (\rhoe \nabla V) \quad \text{in}\; Q_T,
\end{equation}
complemented with no-flux boundary conditions and an initial condition. In~\eqref{eq:FokkerPlanck}, $V\in W^{1,\infty}(\O)$ 
denotes a Lipschitz continuous exterior potential. 
In this case, the energy functional is 
\be\label{eq:NRJ_FP}
\mathcal{E}(\rho) = \int_{\Omega} [\rho \log \frac{\rho}{e^{-V}} - \rho + e^{-V}] \d \x.
\ee
The potential $V$ is defined up to an additive constant, which can be adjusted so that the 
densities $e^{-V}$ and $\rho^0$ have the same mass. 
Beside this simple example studied for instance in~\cite{JKO98, BGG12}, many problems have been proven to exhibit the same variational structure. 
Porous media flows \cite{Otto01, LM13_Muskat, CGM17}, magnetic fluids~\cite{Otto98}, supraconductivity~\cite{AS08, AMS11}, crowd motions~\cite{MRS10}, 
aggregation processes in biology~\cite{CDFLS11, Bla14_KS}, 
semiconductor devices modelling~\cite{KMX17}, or multiphase mixtures~\cite{CMN19, JKM_arXiv} are just few examples of problems that can be represented 
as gradient flows in the Wasserstein space. Designing efficient numerical schemes for approximating their solutions is therefore a major issue and our leading motivation.

\subsection{JKO semi-discretization}\label{ssec:JKO}

An intriguing question is how to solve numerically a gradient flow. Problem \eqref{eq:pde} can of course be directly 
discretized and solved using one of the many tools available nowadays for the numerical approximation of partial differential equations. 
The development of energy diminishing numerical methods based on classical ODE solvers for the march in time has been the purpose of many 
contributions in the recent past, see for instance~\cite{BC12, CG16_MCOM, CG_VAGNL, Cances_OGST, SCS18, ABPP_arxiv, CNV_HAL}. 
Nevertheless, the aforementioned methods disregard the fact that the trajectory aims at optimizing the energy decay, in opposition 
to methods based on minimizing movement scheme (often called JKO scheme after~\cite{JKO98}). 
This scheme can be thought as a generalization to the space $\bbP(\O)$  (the mass being defined by the initial data $\rho^0$) 
equipped with the metric $W_2$ of the backward Euler scheme and writes:
\begin{equation}\label{JKO}
\begin{cases}
 \rho_{\tau}^0 = \rho^0, \\
 \rho_{\tau}^n \in {\text{argmin}}_{\rho}\; \frac{1}{2\tau} W_2^2(\rho,\rho_{\tau}^{n-1}) + \mathcal{E}(\rho).
 \end{cases}
\end{equation}
The parameter $\tau$ is the time discretization step.
Scheme \eqref{JKO} generates a sequence of measures $\left(\rho_{\tau}^n\right)_{n\geq 1}$. 
Using this sequence it is possible to construct a time dependent measure by gluing them together in a piecewise constant (in time) fashion: $\rho_{\tau}(t) = \rho_{\tau}^n$, for $t \in (t^{n-1} = (n-1)\tau, t^n = n\tau]$. Under suitable assumptions on the functional $\mathcal{E}$, it is possible to prove the uniform convergence in time 
of this measure to weak solutions $\rhoe$ of \eqref{eq:pde} (see for instance~\cite{AGS08} or \cite{Santambrogio_OTAM}).

Lagrangian numerical methods appear to be very natural (especially in dimension 1) to approximate the Wasserstein distance and thus 
the solution to~\eqref{JKO}. This was already noticed in~\cite{KW99}, and motivated numerous contributions, see for instance~\cite{MO14, CG16, MO17, JMO17, 
CDMM18,CCP19,LMSS_arXiv}. In our approach, we rather consider an Eulerian method based on Finite Volumes for the space discretization. 
The link between monotone Finite Volumes and optimal transportation was simultaneously highlighted by  Mielke~\cite{Mie11} and 
Maas~\cite{Maas11, GM13, EM14, MM16, GKM_arXiv}. 
But these works only focuses on the space discretization, whereas we are interested in the fully discrete setting. Moreover, 
the approximation based on upstream mobility we propose in Section~\ref{ssec:scheme} does not enter their framework. 
Last but not least, let us mention the so-called ALG2-JKO scheme~\cite{BCL16, CGLM_preprint} where the optimization problem~\eqref{JKO}
is discretized and then solved thanks to an augmented Lagrangian iterative method. Our approach is close to the one of~\cite{BCL16}, 
with the goal to obtain a faster numerical solver. 
	
Thanks to formal calculations, let us highlight the connection of the minimization problem involved at each step of \eqref{JKO} with a 
system coupling a forward in time conservation law with a backward in time Hamilton-Jacobi (HJ) equation. 
The problem can be rewritten thanks to Benamou-Brenier dynamic formulation of optimal transport~\cite{BB00} as
\begin{equation}\label{JKO-BB}
\inf_{\rho, \bv} \frac{1}{2} \int_{t^{n-1}}^{t^n} \int_{\Omega} \rho |\bv|^2 \d \x \d t + \mathcal{E}(\rho(t^n)),
\end{equation}
where the density and velocity curves satisfy weakly
\begin{equation}\label{eq:continuity}
\begin{cases}
\partial_t \rho + \div (\rho \bv) = 0 & \text{in} \; \Omega \times (t^{n-1},t^n), \\
\rho \bv \cdot \n = 0 & \text{on } \; \partial \Omega \times (t^{n-1},t^n), \\
\rho(t^{n-1}) = \rho_\tau^{n-1}&\text{in}\;\Omega.
\end{cases}
\end{equation}
The next value $\rho_\tau^n$ is chosen equal to $\rho(t^n)$ for the optimal $\rho$ in~\eqref{JKO-BB}--\eqref{eq:continuity}. 
Using the momentum $\bm = \rho\bv$ instead of $\bv$ as a variable, and incorporating the constraint~\eqref{eq:continuity} in~\eqref{JKO-BB} 
yields the saddle-point problem 
\begin{multline}\label{JKO-primal}
\inf_{\rho, \bm } \sup_{\phi} \int_{t^{n-1}}^{t^n} \int_{\Omega} \frac{|\bm|^2}{2 \rho} \d \x\d t + 
\int_{t^{n-1}}^{t^n} \int_{\O} (\rho\partial_t \phi  + \bm \cdot \grad \phi) \d \x \d t \\
+ \int_{\Omega} [\phi(t^{n-1}) \rho_\tau^{n-1} - \phi(t^n) \rho(t^n)] \d \x + \mathcal{E}(\rho(t^n)).
\end{multline}
We will refer to \eqref{JKO-primal} as the primal problem.
The dual problem is obtained by exchanging inf and sup in~\eqref{JKO-primal}. 
Strong duality can be proven and the problem hence does not change. 
Optimizing first w.r.t. $\bm$ leads to $\bm =- \rho \grad \phi$, so that the dual problem writes
\be\label{JKO-dual}
\sup_{\phi} \inf_{\rho} \int_{t^{n-1}}^{t^n} \int_{\Omega} (\partial_t \phi -\frac{1}{2} |\grad \phi|^2) \rho \d \x \d t 
+ \int_{\Omega} [\phi(t^{n-1}) \rho_\tau^{n-1} -\phi(t^{n}) \rho(t^{n})]  \d \x  + \mathcal{E}(\rho(t^n)).
\ee
Because of the first term in~\eqref{JKO-dual}, the infimum is equal to $-\infty$ unless 
$-\partial_t \phi + \frac{1}{2} |\grad \phi|^2 \le 0$ a.e. in $\O\times(t^{n-1},t^n)$, with equality $\rho$-almost everywhere 
since $\rho \geq 0$. 
Moreover, optimizing w.r.t. $\rho(t^n)$ provides that $\phi(t^n) \leq \frac{\delta \Ee}{\delta \rho}[\rho(t^n)]$ with equality 
$\rho(t^n)$-almost everywhere. 
Hence the dual problem can be rewritten as
\be\label{eq:JKO-dual2}
\sup_{\phi(t^{n-1})}  \int_{\Omega} \phi(t^{n-1}) \rho_\tau^{n-1} \d \x + \inf_{\rho(t^n)} \left[\mathcal{E}(\rho(t^n)) - \int_{\Omega} \phi(t^n) \rho(t^n) \d \x\right],
\ee
subject to the constraints
\begin{equation}\label{eq:JKO-dual-oc}
\begin{cases}
-\partial_t \phi + \frac{1}{2} |\grad \phi|^2 \le 0 & \text{in}\; \O\times(t^{n-1},t^n), \\
\phi(t^n) \le \frac{\delta \mathcal{E}}{\delta \rho}[\rho(t^n)] & \text{in}\; \O. 
\end{cases}
\end{equation}

On the one hand the monotonicity of the backward HJ equation $- \p_t \phi + \frac{1}{2} |\grad \phi|^2 = f$ with respect to its right-hand side $f\leq 0$ implies that given $\phi(t^n)$, the solution (which exists) of $- \p_t \phi + \frac{1}{2} |\grad \phi|^2 = 0$ gives a bigger value at $\phi(t^{n-1})$ and thus a better competitor for \eqref{eq:JKO-dual-oc}. On the other given $\phi$, among all the $\rho$ satisfying the constraint, the competitor  $\bar{\rho} = \frac{\delta \mathcal{E}}{\delta \rho}^{-1} (\phi(t^n))$ is optimal. Indeed let $G_{\phi(t^n)} (\rho) = \mathcal{E}(\rho)-\int_{\Omega} \phi(t^n) \rho$, the convexity of $\mathcal{E}$ at $ \bar{\rho}$ implies
\[
\begin{aligned}
 G_{\phi(t^n)}(\rho)= \mathcal{E}({\rho}) - \int_{\Omega} \phi(t^n) \rho \d \x  &\ge \int_{\Omega} \frac{\delta \mathcal{E}(\bar{\rho})}{\delta \rho} (\rho - \bar{\rho}) \d \x + \mathcal{E}(\bar{\rho}) - \int_{\Omega} \phi(t^n) \rho \d \x  \\
&= \int_{\Omega} \phi(t^n) (\rho-\bar{\rho}) \d \x +  \mathcal{E}(\bar{\rho})  -\int_{\Omega} \phi(t^n)  \bar{\rho} \d \x =G_{\phi(t^n)}(\bar{\rho}).
\end{aligned}
\]

At the end of the day, the primal-dual optimality conditions of problem \eqref{JKO} finally amounts to the mean field game
\be\label{eq:MFG}
\begin{cases}
\p_t \phi - \frac{1}{2} |\grad \phi|^2 =  0, \\
\p_t \rho - \div(\rho \grad \phi) = 0, 
\end{cases}
\text{in}\; \O\times(t^{n-1},t^n), \; \text{with}\;
\qquad
\begin{cases}
\rho(t^{n-1}) = \rho_\tau^{n-1}, \\
\phi(t^n) = \frac{\delta \Ee}{\delta \rho}[\rho(t^n)],
\end{cases}
\text{in}\; \O.
\ee
The optimal $\rho_\tau^n$ of~\eqref{JKO} is then equal to $\rho(t^n)$.
The no-flux boundary condition reduces to $\grad \phi \cdot \n = 0$  on $\p\O\times(t^{n-1},t^n)$.

The approximation of the system~\eqref{eq:MFG} is a natural strategy to approximate the solution to~\eqref{eq:pde}. 
This approach was for instance at the basis of the works~\cite{BCL16,CCWW_ArXiv}. These methods require a sub-time stepping 
to solve system~\eqref{eq:MFG} on each interval $(t^{n-1},t^n)$, yielding a possibly important computational cost. 
The avoidance of this sub-time stepping is the main motivation of the time discretization we propose now.

\subsection{Implicit linearization of the Wasserstein distance and LJKO scheme}\label{ssec:LJKO}
Let us introduce in the semi-discrete in time setting the time discretization to be used in the fully discrete setting later on. 
The following ansatz is at the basis of our approach: when $\tau$ is small, $\rho_\tau^n$ is close to $\rho_\tau^{n-1}$. 
Then owing to~\cite[Section 7.6]{Villani-Topics} (see also~\cite{Peyre18}), the Wasserstein distance between two densities $\rho$ and $\mu$ of $\bbP(\O)$
is close to some weighted $H^{-1}$ distance, namely
\be\label{eq:W2_H-1}
\left\| \rho - \mu \right\|_{\dot H_\rho^{-1}} = W_2(\rho, \mu) + o(W_2(\rho, \mu)), \qquad \forall \rho, \mu \in \bbP(\O). 
\ee
In the above formula, we denoted by 
\be\label{eq:H-1}
\left\| h\right\|_{\dot{H}^{-1}_{\rho}} = \left\{ \sup_{\varphi} \int_{\Omega} h \varphi \, \d\x \;\middle|\;  {\|\varphi\|}_{\dot{H}^1_{\rho}} \le 1 \right\}, 
\quad\text{with}\; {\|\varphi\|}_{\dot{H}^1_{\rho}} = \left(\int_\O \rho |\grad \varphi|^2 \d\x\right)^{1/2}, 
\ee
so that $\| \rho - \mu\|_{\dot{H}^{-1}_{\rho}} = \|\psi\|_{\dot{H}^{1}_{\rho}}$ with $\psi$ solution to
\begin{equation}\label{eq:monge-ampere-lin}
\begin{cases} \rho - \mu- \div (\rho \grad \psi) = 0 & \text{in}\; \O,\\
\grad \psi \cdot \n = 0 & \text{on}\; \p\O. 
\end{cases}\end{equation}
Indeed, in view of~\eqref{eq:H-1}--\eqref{eq:monge-ampere-lin}, there holds
\[
\int_{\Omega} (\rho-\mu) \varphi \, \d\x = - \int_{\Omega} \div (\rho \grad \psi) \varphi \, \d \x 
= \int_{\Omega} \rho \grad \psi \cdot \grad \varphi \, \d\x \leq \|\psi\|_{\dot{H}^1_\rho} \|\varphi\|_{\dot{H}^1_\rho},
\]
with equality if $\varphi = \psi /\|\psi\|_{\dot{H}^1_\rho}$. Equation~\eqref{eq:monge-ampere-lin} can be thought as 
a linearization of the Monge-Amp\`ere equation. 

In view of \eqref{eq:W2_H-1}, a natural idea is to replace the Wasserstein distance by the weighted 
$\dot{H}_\rho^{-1}$ norm in~\eqref{JKO}, leading to what we call the implicitly linearized JKO (or LJKO) scheme: 
\begin{equation}\label{LJKO}
\rho_{\tau}^n \in \underset{\rho \in \bbP(\O)}{\text{argmin}} \frac{1}{2\tau} \left\|\rho - \rho_\tau^{n-1}\right\|^2_{\dot{H}^{-1}_{\rho}(\Omega)} + \mathcal{E}(\rho), 
\qquad n \geq 1. 
\end{equation}
The choice of an implicit weight $\rho$ in~\eqref{LJKO} appears to be particularly important when $\{\rho^{n-1}_\tau=0\}$ has a non-empty interior set, 
which can not be properly invaded by the $\rho_\tau^n$ if one chooses the explicit (but computationally cheaper) weight $\rho_\tau^{n-1}$ as in~\cite{MW19}.
Our time discretization is close to the one that was proposed very recently in~\cite{LLW19} where the introduction on inner time stepping was also avoided. 
In \cite{LLW19}, the authors introduce a regularisation term based on Fisher information, which mainly amounts to stabilize the scheme 
thanks to some additional non-degenerate diffusion. In our approach, we manage to avoid this additional stabilization term by taking advantage of the 
monotonicity of the involved operators. 

At each step $n\geq 1$, \eqref{LJKO} can be formulated as a constrained optimization problem. To highlight its convexity, 
we perform the change of variables $(\rho,\psi) \mapsto (\rho,\bm = -\rho \grad \psi)$, in analogy with \eqref{JKO-BB}, and rewrite step $n$ as:
\begin{equation}\label{LJKO-dyn}
\inf_{\rho,\bm} \int_{\O} \frac{|\bm|^2}{2\tau \rho} \d \x + \mathcal{E}(\rho), \quad \text{subject to: }
\left\{
\begin{aligned}
&\rho - \rho_\tau^{n-1} + \div \bm = 0 &&\text{in } \, \Omega, \\
&\bm \cdot \n = 0 &&\text{on } \, \partial \Omega.
\end{aligned}
\right.
\end{equation}
Incorporating the constraint in the above formulation yields the following inf-sup problem:
\begin{equation}\label{LJKO-infsupm}
\inf_{\rho,\bm} \sup_{\phi} \int_{\O} \frac{|\bm|^2}{2\tau \rho} \d \x -\int_{\O} (\rho-\rho_\tau^{n-1}) \phi \,\d \x + \int_{\O} \bm \cdot
 \grad \phi \,\d \x + \mathcal{E}(\rho),
\end{equation}
the supremum w.r.t. $\phi$ being $+\infty$ unless the constraint is satisfied. Problem \eqref{LJKO-infsupm} is strictly convex in $(\rho,\bm)$ 
and concave (since linear) in $\phi$.
Exploiting Fenchel-Rockafellar duality theory it is possible to show that strong duality holds, so that \eqref{LJKO-infsupm} is equivalent to its dual 
problem where the inf and the sup have been swapped.
Optimizing w.r.t. to $\bm$ yields the optimality condition $\bm = -\tau \rho \grad \phi$, hence
the problem reduces to
\begin{equation}\label{LJKO-supinf}
\sup_{\phi}   \int_{\Omega} \rho_\tau^{n-1} \phi \,\d \x +\inf_{\rho} \int_{\O} (-\phi -\frac{\tau}{2} |\grad \phi|^2) \rho \,\d \x + \mathcal{E}(\rho).
\end{equation}
The problem is now strictly convex in $\rho$ and concave in $\phi$. Optimizing w.r.t. $\rho$ leads to the optimality condition
\be\label{eq:HJ_disc_leq}
\phi_\tau^n + \frac{\tau}{2} |\grad \phi_\tau^n|^2 \le \frac{\delta \mathcal{E}}{\delta \rho}[\rho_\tau^n], 
\ee
with equality on $\{\rho_\tau^n>0\}$. In the above formula, $\phi_\tau^n$ denote the optimal $\phi$ realizing the sup in~\eqref{LJKO-supinf}.
Similarly to was was done in the previous section for the JKO scheme, it is possible to show again that saturating inequality~\eqref{eq:HJ_disc_leq} 
on $\{\rho_\tau^n=0\}$ is optimal since the mapping $f \mapsto \phi$ solution to $\phi + \frac{\tau}2 |\grad \phi|^2 = f$ is monotone. 
Finally, the optimality conditions for the LJKO problem~\eqref{LJKO} write
\be\label{eq:LJKO-oc}
\begin{cases}
\displaystyle \phi_\tau^n + \frac{\tau}{2} |\grad \phi_\tau^n|^2 = \frac{\delta \mathcal{E}}{\delta \rho}[\rho_\tau^n], \\
\displaystyle\frac{\rho_\tau^n-\rho_\tau^{n-1}}{\tau} - \nabla \cdot (\rho_\tau^n \grad \phi_\tau^n) = 0, 
\end{cases}
\ee
set on $\O$, complemented with homogeneous Neumann boundary condition $\grad \phi_\tau^n \cdot \n = 0$ on $\p\O$.
We can interpret \eqref{eq:LJKO-oc} as the one step resolvent of the mean-field game \eqref{eq:MFG}. 
Both the forward in time continuity equation and the backward in time HJ equation are discretized thanks to one step of  
backward Euler scheme.

\subsection{Goal and organisation of the paper}\label{ssec:goal}

As already noted, most of the numerical methods based on backward Euler scheme disregard the optimal 
character of the trajectory $t \mapsto \rhoe(t)$ of the exact solution to~\eqref{eq:pde}. 
Rather than discretizing directly the PDE~\eqref{eq:pde}, which can be thought as the Euler-Lagrange equation 
for the steepest descent of the energy, we propose to first discretize w.r.t. space the functional appearing in the optimization problem~\eqref{LJKO}, 
and then to optimize. The corresponding Euler-Lagrange equations will then encode the optimality of the trajectory. 
The choice of the LJKO scheme~\eqref{LJKO} rather than the classical JKO scheme~\eqref{JKO} is motivated by the fact that 
solving~\eqref{eq:LJKO-oc} is computationally affordable. Indeed, it merely demands to approximate two functions $\rho_\tau^n, \phi_\tau^n$ 
rather than time depending trajectories in function space as for the JKO scheme~\eqref{eq:MFG}. This allows in particular to avoid 
inner time stepping as in~\cite{BCL16,CCWW_ArXiv}, making our approach much more tractable to solve complex problems. 

Two-Point Flux Approximation (TPFA) Finite Volumes are a natural solution for the space discretization. 
The are naturally locally conservative thus well-suited to approximate the conservation laws. Moreover, 
they naturally transpose to the discrete setting the monotonicity properties of the continuous operators. 
Monotonicity was crucial in the derivation of the optimality conditions~\eqref{eq:LJKO-oc}, as it will 
also be the case in the fully discrete framework later on. 
This led us to use upstream mobilities in the 
definition of the discrete counterpart of the squared $\dot H^1_\rho$ norm. 
The system~\eqref{eq:LJKO-oc} thus admits a discrete counterpart~\eqref{LJKOh}.
The derivation of the fully discrete Finite Volume scheme based on the LJKO time discretization is 
performed in Section~\ref{sec:FV}, where we also establish the well-posedness of the scheme, 
as well as the preservation at the discrete level of fundamental properties of the continuous model, 
namely the non-negativity of the densities and the decay of the energy along time. 
In Section~\ref{sec:conv}, we show that our scheme converges in the case of the Fokker-Planck equation~\eqref{eq:FokkerPlanck} 
under the assumption that the initial density is bounded from below by a positive constant. Even though 
we do not treat problem~\eqref{eq:pde} in its full generality, this results shows for the consistency 
of the scheme. Finally, Section~\ref{sec:num} is devoted to numerical results, where our scheme is tested 
on several problems including systems.


\section{A variational Finite Volume scheme}\label{sec:FV}

The goal of this section is to define the fully discrete scheme to solve~\eqref{eq:pde}, and to 
exhibit some important properties of the scheme. But at first, let us give some assumptions and 
notations on the mesh. 

\subsection{Discretization of $\O$}\label{ssec:mesh}

The domain $\O\subset \R^d$ is assumed to be polygonal if $d=2$ or polyhedral if $d=3$.
The specifications on the mesh are classical for TPFA Finite Volumes~\cite{EGH00}. 
More precisely, an \emph{admissible mesh of $\O$} is a triplet $\left(\Tt, \ov \Sigma, {(\x_K)}_{K\in\Tt}\right)$ such that the following conditions are fulfilled. 
\begin{enumerate}[(i)]
\item Each control volume (or cell) $K\in\Tt$ is non-empty, open, polyhedral and convex. We assume that 
$K \cap L = \emptyset$ if $K, L \in \Tt$ with $K \neq L$, while $\bigcup_{K\in\Tt}\ov K = \ov \O$. 
The Lebesgue measure of $K\in\Tt$ is denoted by $m_K>0$.
\item Each face $\sig \in \ov \Sigma$ is closed and is contained in a hyperplane of $\R^d$, with positive 
$(d-1)$-dimensional Hausdorff (or Lebesgue) measure denoted by $m_\sig = \mathcal{H}^{d-1}(\sig) >0$.
We assume that $\mathcal{H}^{d-1}(\sig \cap \sig') = 0$ for $\sig, \sig' \in \ov \Sigma$ unless $\sig' = \sig$.
For all $K \in \Tt$, we assume that 
there exists a subset $\ov \Sigma_K$ of $\ov \Sigma$ such that $\p K =  \bigcup_{\sig \in \ov \Sigma_K} \sig$. 
Moreover, we suppose that $\bigcup_{K\in\Tt} \ov \Sigma_K = \ov \Sigma$.
Given two distinct control volumes $K,L\in\Tt$, the intersection $\ov K \cap \ov L$ either reduces to a single face
$\sig  \in \ov \Sigma$ denoted by $K|L$, or its $(d-1)$-dimensional Hausdorff measure is $0$. 
\item The cell-centers $(\x_K)_{K\in\Tt} \subset \O$ are pairwise distinct 
and are such that, if $K, L \in \Tt$ 
share a face $K|L$, then the vector $\x_L-\x_K$ is orthogonal to $K|L$ and has the same orientation as 
the normal $\n_{KL}$ to $K|L$ outward w.r.t. $K$.
\end{enumerate} 
Cartesian grids, Delaunay triangulations or Vorono\"i tessellations are typical examples of admissible meshes in the above sense.
Since no boundary fluxes appear in our problem, the boundary faces $\Sigma_{\rm ext} = \{ \sig \subset \p\O\}$ are not involved in our computations. 
Nonzeros fluxes may only occur across internal faces $\sig \in \Sigma = \ov \Sigma \setminus \Sigma_{\rm ext}$. 
We denote by $\Sigma_{K} = \ov \Sigma_{K}\cap \Sigma$ the internal faces belonging to $\p K$, and by $\Nn_K$ the neighboring 
cells of $K$, i.e., $\Nn_K = \{ L \in \Tt \;|\; K|L \in \Sigma_{K}\}$.
For each internal face $\sigma = K|L \in \Sigma$, we refer to the diamond cell $\Delta_\sig$ as the polyhedron whose edges 
join $\x_K$ and $\x_L$ to the vertices of $\sigma$. The diamond cell $\Delta_\sig$ is convex if $\x_K \in K$ and $\x_L \in L$. 
Denoting by $d_\sig = |\x_K-\x_L|$, the measure $m_{\Delta_{\sigma}}$ of $\Delta_\sig$ is then equal to $m_\sig d_\sig/d$, where 
$d$ stands for the space dimension. The transmissivity of the face $\sig \in \Sigma$ is defined by $a_\sig = m_\sig / d_\sig$.

The space $\R^\Tt$ is equipped with the scalar product 
\[
\langle \bh, \bphi \rangle_\Tt = \sum_{K\in\Tt} h_K \phi_K m_K, \qquad \forall \bh = {(h_K)}_{K\in\Tt}, \bphi =  {(\phi_K)}_{K\in\Tt},
\]
which mimics the usual scalar product on $L^2(\O)$.

\subsection{Upstream weighted dissipation potentials}\label{ssec:H1rhoTt}

Since the LJKO time discretization presented in Section~\ref{ssec:LJKO} relies on weighted $\dot H^1_\rho$ and $H^{-1}_\rho$ norms, 
we introduce the discrete counterparts to be used in the sequel. As it will be explained in what follows, the upwinding 
yields to problems to introduce discrete counterparts to the norms. To bypass this difficulty, we adopt a formalism based 
on dissipation potentials inspired from the one of generalized gradient flows introduced by Mielke in~\cite{Mie11}.
This framework was used for instance to study the convergence of the semi-discrete in space squareroot Finite Volume approximation of the 
Fokker-Planck equation, see~\cite{Heida18}.

Let $\brho = \left(\rho_K\right)_{K\in\Tt} \in \R_+^\Tt$, and let $\bphi = \left(\phi_K\right)_{K\in\Tt} \in \R^\Tt$, 
then we define the upstream weighted discrete counterpart of $\frac12\|\phi\|_{\dot H^1_\rho}^2$ by 
\be\label{eq:H1rhoTt}
\Psi_{\Tt}^*(\brho; \bphi) 
=  \frac12\sum_{\substack{\sig \in \Sig\\\sig = K|L}} a_\sig \rho_\sig \left(\phi_K - \phi_L\right)^2 \geq 0, 
\ee
where $\rho_\sig$ denotes the upwind value of $\brho$ on $\sig \in \Sig$:
\be\label{eq:upwind-1}
\rho_\sig = \begin{cases}
\rho_K & \text{if}\; \phi_K > \phi_L, \\
\rho_L & \text{if}\; \phi_K < \phi_L, 
\end{cases}
\qquad \forall \sig = K|L \in \Sig. 
\ee
Because of the upwind choice of the mobility~\eqref{eq:upwind-1}, the functional~\eqref{eq:H1rhoTt} is not symmetric, 
i.e., $\Psi_\Tt^*(\brho;\bphi) \neq \Psi_\Tt^*(\brho;-\bphi)$ in general, which prohibits to define a semi-norm from $\Psi_{\Tt}^*(\brho;\cdot)$. 
But one easily checks that $\bphi \mapsto \Psi_\Tt^*(\brho,\bphi)$ is convex, continuous thus lower semi-continuous (l.s.c.) and proper. 

Let us now turn to the definition of the discrete counterpart of $\frac12\|\cdot\|^2_{\dot H^{-1}_\rho}$.
To this end, we introduce the space $\bbF_\Tt \subset \R^{2\Sig}$ of conservative fluxes. 
An element $\bF$ of $\bbF_\Tt$ is made of two outward fluxes $F_{K\sig}, F_{L\sig}$ for each $\sig = K|L \in \Sig$, 
and one flux $F_{K\sig}$ per boundary face $\sig \in \Sig_K$. 
We impose the conservativity across each internal faces
\be\label{eq:cons_sig}
F_{K\sig}+F_{L\sig} = 0, \qquad \forall \sig = K|L \in \Sigma.
\ee
In what follows, we denote by $F_\sig = |F_{K\sig}| = |F_{L,\sig}|$.  
There are no flux across the boundary faces.
The space $\bbF_\Tt$ is then defined as 
\[
\bbF_\Tt = \left\{ \bF = \left(F_{K\sig}, F_{L\sig}\right)_{\sig=K|L  \in \Sig} \in \R^{2\Sig}\; \middle| \; \text{\eqref{eq:cons_sig} holds} \right\}.
\]
Now, we define the subspace 
\[
\R_0^\Tt = \left\{ \bh = \left(h_K\right)_{K\in\Tt} \in \R^\Tt \; \middle| \; \langle \bh, \1 \rangle_\Tt = 0 \right\}
\]
and
\be\label{eq:H-1rhoTt}
\Psi_\Tt(\brho; \bh) =  \inf_{\bF} \sum_{\sig \in \Sig} \frac{(F_\sig)^2}{2 \rho_\sig} d_\sig m_\sig \geq 0, \qquad \forall \bh \in \R_0^\Tt,
\ee
where the minimization over $\bF$ is restricted to the linear subspace of $\bbF_\Tt$ such that 
\be\label{eq:cons_K}
h_Km_K = \sum_{\sig \in \Sigma_K}m_\sig F_{K\sig}, \qquad \forall K \in \Tt.
\ee
In~\eqref{eq:H-1rhoTt}, $\rho_\sig$ denotes the upwind value w.r.t. $\bF$, i.e., 
\be\label{eq:upwind}
\rho_\sig = \begin{cases}
\rho_K & \text{if}\;F_{K\sig} >0, \\
\rho_L & \text{if}\;F_{L\sig} >0, 
\end{cases}
\qquad \forall \sig = K|L \in \Sigma.
\ee
In the case where some $\rho_\sig$ vanish, we adopt the following convention in~\eqref{eq:H-1rhoTt} and in what follows:
\[
 \frac{(F_\sig)^2}{2\rho_\sig} = \begin{cases}
 0 & \text{if}\; F_\sig = 0 \; \text{and}\; \rho_\sig =0, \\
 +\infty & \text{if}\; F_\sig > 0 \; \text{and}\; \rho_\sig =0, 
 \end{cases}
 \qquad \forall \sig \in \Sig.
\]
Remark that this condition is similar to the one implicitly used in \eqref{JKO-primal} and \eqref{LJKO-dyn}. 
Summing~\eqref{eq:cons_K} over $K\in\Tt$ and using the conservativity across the edges~\eqref{eq:cons_sig}, one notices that 
there is no $\bF \in \bbF_\Tt$ satisfying~\eqref{eq:cons_K} unless $\bh \in \R^\Tt_0$. 
But when $\bh \in \R^\Tt_0$, the minimization set in~\eqref{eq:H-1rhoTt} is never empty. 
Note that $\Psi_\Tt(\brho;\bh)$ may take infinite values 
when $\brho$ vanishes on some cells, for instance $\Psi_\Tt(\brho;\bh)=+\infty$  if $h_K>0$ and $\rho_K=0$ for some $K\in\Tt$.

Formula \eqref{eq:H-1rhoTt} deserves some comments. This sum is built to approximate $\int_\O \frac{|\bm|^2}{2\rho} \d\x$.
The flux $F_\sig$ approximates $|\bm \cdot \n_\sig|$, and thus encodes the information on $\bm$ only in the one direction (normal to the face $\sig$) 
over $d$. But on the other hand, the volume $d_\sig m_\sig$ is equal to $d m_{\Delta_\sig}$ which allows to hope that the sum is a consistent 
approximation of the integral. This remark has a strong link with the notion of inflated gradients introduced in~\cite{CHLP03,EG03}. 
The convergence proof carried out in Section~\ref{sec:conv} somehow shows the non-obvious consistency of this formula. 

At the continuous level, the norms $\|\cdot\|_{\dot H^1_\rho}$ and $\|\cdot\|_{H^{-1}_\rho}$ are in duality.
This property is transposed to the discrete level in the following sense. 
\begin{lem}\label{lem:Legendre}
Given $\brho\geq\0$, the functionals $\bh\mapsto\Psi_\Tt(\brho;\bh)$ and $\bphi\mapsto \Psi^*_\Tt(\brho;\bphi)$ 
are one another Legendre transforms in the sense that 
\be\label{eq:Legendre}
\Psi_\Tt(\brho;\bh) = \sup_{\bphi} \langle \bh, \bphi \rangle_\Tt - \Psi_\Tt^*(\brho;\bphi), \qquad \forall \bh \in \R_0^\Tt.
\ee
In particular, both are proper convex l.s.c. functionals. Moreover, if $\Psi_\Tt(\brho;\bh)$ is finite, then 
there exists a discrete Kantorovitch potential $\bphi$ solving 
\be\label{eq:Kanto_disc}
h_K m_K  = \sum_{\substack{\sig \in \Sig_K\\\sig = K|L}} a_\sig \rho_\sig (\phi_K - \phi_L), \qquad \forall K \in \Tt, 
\ee
such that 
\be\label{eq:=Dissip}
\Psi_\Tt(\brho;\bh) =\Psi_\Tt^*(\brho;\bphi) = \frac12 \langle \bh, \bphi \rangle_\Tt.
\ee
\end{lem}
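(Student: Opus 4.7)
The plan is to apply Fenchel--Rockafellar duality to the constrained minimization defining $\Psi_\Tt(\brho;\bh)$ in \eqref{eq:H-1rhoTt}, with Lagrange multiplier $\bphi$ enforcing the conservation constraint \eqref{eq:cons_K}. A preliminary step is to confirm that, despite the upwinding, both functionals are genuinely convex in their second argument: the identity
\[
\tfrac12 a_\sig \rho_\sig (\phi_K-\phi_L)^2 = \tfrac12 a_\sig \rho_K [(\phi_K-\phi_L)_+]^2 + \tfrac12 a_\sig \rho_L [(\phi_L-\phi_K)_+]^2
\]
exposes $\Psi^*_\Tt(\brho;\cdot)$ as a sum of $C^1$ convex functions, hence convex, continuous, and therefore proper and lower semicontinuous. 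An analogous decomposition in the signed flux $F_{K\sig}$ shows that the edgewise cost $\frac{F_\sig^2}{2\rho_\sig(\bF)}d_\sig m_\sig$ is convex in $\bF$, so \eqref{eq:H-1rhoTt} is a convex program under linear constraints.

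The core step is a single Sion-type swap. Writing
\[
\Psi_\Tt(\brho;\bh) = \inf_{\bF \in \bbF_\Tt} \sup_{\bphi \in \R^\Tt} \sum_{\sig \in \Sig} \frac{F_\sig^2}{2\rho_\sig(\bF)} d_\sig m_\sig + \sum_{K \in \Tt} \phi_K\Bigl(h_K m_K - \sum_{\sig \in \Sig_K} m_\sig F_{K\sig}\Bigr),
\]
the sup equals $0$ on the feasible set \eqref{eq:cons_K} and $+\infty$ otherwise. Convexity in $\bF$, linearity in $\bphi$, and finite-dimensionality justify exchanging $\inf$ and $\sup$. Using conservativity to rewrite the linear term as $-\sum_{\sig = K|L} m_\sig(\phi_K-\phi_L) F_{K\sig}$, the inner infimum in $\bF$ decouples edgewise into a one-dimensional problem: for $\alpha = \phi_K-\phi_L$,
\[
\inf_{t\in\R}\Bigl[\frac{t^2}{2\rho_\sig^{\mathrm{up}}(t)} d_\sig m_\sig - m_\sig \alpha t\Bigr] = -\tfrac12 a_\sig\, \rho_\sig^{\mathrm{up}}(\alpha)\, \alpha^2,
\]
where $\rho_\sig^{\mathrm{up}}$ denotes the upwind value of $\brho$, equal to $\rho_K$ or $\rho_L$ according to the sign of its argument. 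This is verified by splitting the real line into $\{t \ge 0\}$ and $\{t \le 0\}$, the minimizer being $t^\ast = \rho_\sig^{\mathrm{up}}(\alpha)\alpha/d_\sig$; the paper's conventions for $F_\sig^2/\rho_\sig$ handle the degenerate cases $\rho_K = 0$ or $\rho_L = 0$. Summing over edges produces $-\Psi^*_\Tt(\brho;\bphi)$ and establishes \eqref{eq:Legendre}. Since $\Psi_\Tt(\brho;\cdot)$ is now a Legendre transform of a proper convex l.s.c. function, it is itself proper convex l.s.c.

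For the final claim, when $\Psi_\Tt(\brho;\bh) < +\infty$ the primal admits a minimizer $\bF^\ast$ by the direct method (the finite-cost condition forces $F_\sig = 0$ on edges with vanishing upwind density, and coercivity holds on the remaining subspace of $\bbF_\Tt$), and Fenchel--Rockafellar then furnishes a maximizer $\bphi$ of the dual. The first-order condition in $\phi_K$ applied to the concave dual $\bphi \mapsto \langle \bh, \bphi\rangle_\Tt - \Psi^*_\Tt(\brho;\bphi)$, or equivalently the primal-dual relation $F^\ast_{K\sig} = \rho_\sig(\phi_K-\phi_L)/d_\sig$ combined with \eqref{eq:cons_K}, yields exactly \eqref{eq:Kanto_disc}. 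The two equalities in \eqref{eq:=Dissip} then follow by substitution: plugging $F^\ast$ into the primal gives $\Psi_\Tt(\brho;\bh) = \tfrac12\sum_\sig a_\sig \rho_\sig (\phi_K-\phi_L)^2 = \Psi^*_\Tt(\brho;\bphi)$, while multiplying \eqref{eq:Kanto_disc} by $\phi_K$ and summing reads $\langle \bh, \bphi\rangle_\Tt = \sum_\sig a_\sig \rho_\sig (\phi_K-\phi_L)^2 = 2\Psi^*_\Tt(\brho;\bphi)$. The most delicate issue I anticipate is handling the case where $\brho$ vanishes on parts of $\Tt$: the upwinding combined with the $+\infty$-convention restricts where an optimal flux may be nonzero and forces the coercivity arguments needed for the existence of both $\bF^\ast$ and $\bphi$ to live on suitable quotient spaces, which must be identified before the direct method can be invoked cleanly.
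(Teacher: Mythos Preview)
Your proposal is correct and follows essentially the same route as the paper: both introduce $\bphi$ as a Lagrange multiplier for the constraint~\eqref{eq:cons_K}, invoke convexity in $\bF$ and linearity in $\bphi$ to swap $\inf$ and $\sup$, carry out the inner minimization in $\bF$ to recover $F_{K\sig}=\rho_\sig(\phi_K-\phi_L)/d_\sig$, and then read off~\eqref{eq:Legendre},~\eqref{eq:Kanto_disc}, and~\eqref{eq:=Dissip}. Your treatment is slightly more explicit---you spell out the edgewise one-dimensional minimization and flag the degenerate-$\brho$ case---whereas the paper simply asserts strong duality and that the supremum in~\eqref{eq:Legendre} is attained when $\Psi_\Tt(\brho;\bh)$ is finite, without dwelling on quotient spaces.
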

\begin{proof} Let $\brho \geq \0$ be fixed. 
Incorporating the constraint~\eqref{eq:cons_K} in~\eqref{eq:H-1rhoTt}, and using the definition of $\rho_\sig$ and the twice
conservativity constraint~\eqref{eq:cons_sig}, we obtain the saddle point primal problem
\begin{multline*}
\Psi_\Tt(\brho; \bh) = \inf_{\bF} \sup_{\bphi} \sum_{\substack{\sig \in \Sig\\\sig = K|L}} 
\left[\frac{\left((F_{K\sig})^+\right)^2}{2\rho_K}  + \frac{\left((F_{K\sig})^-\right)^2}{2\rho_L}\right]m_\sig d_\sig  \\
 + \sum_{K\in\Tt} h_K \phi_K m_K - \sum_{\substack{\sig \in \Sig\\\sig = K|L}} m_\sig F_{K\sig} (\phi_K - \phi_L).
\end{multline*}
The functional in the right-hand side is convex and coercive w.r.t. $\bF$ and linear w.r.t. $\bphi$, so that strong duality holds. 
We can exchange the sup and the inf in the above formula to obtain the dual problem, and we minimize first w.r.t. $\bF$, leading to 
\[
F_{K\sig}= \rho_\sig \frac{\phi_K - \phi_L}{d_\sig}, \qquad \forall \sig = K|L \in \Sig.
\]
Substituting $F_{K\sig}$ by $\rho_\sig \frac{\phi_K - \phi_L}{d_\sig}$ in the dual problem leads to~\eqref{eq:Legendre}, 
while the constraint~\eqref{eq:cons_K} turns to \eqref{eq:Kanto_disc}. The fact that $\Psi_\Tt^*(\brho,\cdot)$ is also the Legendre transform 
of $\Psi_\Tt(\brho,\cdot)$ follows from the fact that it is convex l.s.c., hence equal to its relaxation. 

When $\Psi_\Tt(\brho; \bh)$ is finite, then the supremum 
in~\eqref{eq:Legendre} is achieved, ensuring the existence of the corresponding discrete Kantorovitch potentials $\bphi$.
Finally,  multiplying~\eqref{eq:Kanto_disc} by 
the optimal $\phi_K$ and by summing over $K\in\Tt$ yields $\langle\bh, \bphi \rangle_\Tt = 2 \Psi^*_\Tt(\brho; \bphi)$. 
Substituting this relation in~\eqref{eq:Legendre} shows the relation $\Psi_\Tt(\brho;\bh) =\Psi_\Tt^*(\brho;\bphi)$.
\end{proof}

Our next lemma can be seen as an adaptation to our setting of a well known 
properties of optimal transportation, namely $\rho \mapsto \frac12W_2^2(\rho,\mu)$ is convex, which is key in the study of Wasserstein gradient flows.
\begin{lem}\label{lem:convex}
Let $\bmu \in \R^\Tt_+$, 
the function $\brho\mapsto \Psi_\Tt(\brho;\bmu-\brho)$ is proper and convex on 
$
(\bmu +\R_0^\Tt)\cap \R_+^\Tt.
$
\end{lem}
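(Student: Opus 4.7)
The plan is to mimic the classical continuous argument for the convexity of $\rho \mapsto \frac12 W_2^2(\rho,\mu)$, which relies on the Benamou--Brenier formulation and the joint convexity of the action density $(\rho,\bm) \mapsto |\bm|^2/\rho$. The main obstacle is that the upwind convention~\eqref{eq:upwind} makes the edge mobility $\rho_\sig$ depend on the sign of $\bF$, so one cannot directly invoke the perspective of a plain quadratic. The fix, already hinted at in the proof of \Cref{lem:Legendre}, is to decompose
\[
\frac{F_\sig^2}{2 \rho_\sig} = \frac{((F_{K\sig})^+)^2}{2\rho_K} + \frac{((F_{K\sig})^-)^2}{2 \rho_L}, \qquad \sig = K|L,
\]
valid for any $\bF \in \bbF_\Tt$ regardless of orientation. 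Each summand is the perspective w.r.t.\ $\rho_K$ (resp.\ $\rho_L$) of the convex functions $F \mapsto (F^+)^2$ (resp.\ $F \mapsto (F^-)^2$), and is therefore jointly convex in $(F_{K\sig},\rho_K,\rho_L) \in \R \times \R_+ \times \R_+$ (with the usual $+\infty$ convention when the denominator vanishes against a nonzero numerator). Summing over $\sig$, the functional
\[
\Jj(\brho,\bF) := \sum_{\sig \in \Sig} \left[\frac{((F_{K\sig})^+)^2}{2\rho_K} + \frac{((F_{K\sig})^-)^2}{2\rho_L}\right] d_\sig m_\sig
\]
is jointly convex on $\R_+^\Tt \times \bbF_\Tt$, and by construction $\Psi_\Tt(\brho; \bh) = \inf_{\bF} \Jj(\brho,\bF)$, where the infimum is over $\bF \in \bbF_\Tt$ satisfying the linear constraint~\eqref{eq:cons_K}.

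Now fix $\brho_0, \brho_1 \in (\bmu + \R_0^\Tt) \cap \R_+^\Tt$ and $t \in [0,1]$. Assuming both values $\Psi_\Tt(\brho_i; \bmu - \brho_i)$ are finite (else convexity is automatic), \Cref{lem:Legendre} provides optimal conservative fluxes $\bF_0, \bF_1$ realizing them, i.e.\ $\Jj(\brho_i, \bF_i) = \Psi_\Tt(\brho_i;\bmu-\brho_i)$ and $(\bmu - \brho_i)_K m_K = \sum_{\sig \in \Sig_K} m_\sig (F_i)_{K\sig}$. Set $\brho_t = (1-t)\brho_0 + t \brho_1$, which remains in $(\bmu + \R_0^\Tt) \cap \R_+^\Tt$ by convexity of this set, and $\bF_t = (1-t)\bF_0 + t \bF_1 \in \bbF_\Tt$. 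By linearity, $\bF_t$ satisfies the constraint~\eqref{eq:cons_K} with $\bh = \bmu - \brho_t$, so it is admissible in the definition of $\Psi_\Tt(\brho_t; \bmu - \brho_t)$.

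Chaining the variational bound with the joint convexity of $\Jj$ established above gives
\[
\Psi_\Tt(\brho_t; \bmu - \brho_t) \le \Jj(\brho_t, \bF_t) \le (1-t)\Jj(\brho_0,\bF_0) + t\Jj(\brho_1,\bF_1) = (1-t)\Psi_\Tt(\brho_0;\bmu-\brho_0) + t \Psi_\Tt(\brho_1;\bmu-\brho_1),
\]
which is the desired convexity. Properness is immediate: $\bmu$ itself lies in $(\bmu + \R_0^\Tt) \cap \R_+^\Tt$, and the zero flux $\bF = \0$ is admissible for $\bh = \0$, so $\Psi_\Tt(\bmu;\0) = 0 < +\infty$.
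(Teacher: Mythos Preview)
Your proof is correct but takes a genuinely different route from the paper's. The paper exploits the dual formula of \Cref{lem:Legendre} to write
\[
\Psi_\Tt(\brho;\bmu-\brho)=\sup_{\bphi}\;\langle\bmu-\brho,\bphi\rangle_\Tt-\Psi_\Tt^*(\brho;\bphi),
\]
and then observes that, for fixed $\bphi$, the upwind mobility $\rho_\sig$ in~\eqref{eq:H1rhoTt} is simply one coordinate of $\brho$ (selected by the sign of $\phi_K-\phi_L$), so $\brho\mapsto\Psi_\Tt^*(\brho;\bphi)$ is \emph{linear}. Hence the bracketed expression is affine in $\brho$ and the supremum of affine functions is convex---a two-line argument once duality is in hand. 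Your approach stays on the primal side: you rewrite the upwind action via the perspective decomposition $((F_{K\sig})^+)^2/\rho_K+((F_{K\sig})^-)^2/\rho_L$, get joint convexity of $\Jj(\brho,\bF)$, and combine optimal fluxes along the segment. This mirrors the continuous Benamou--Brenier argument more transparently and makes the joint convexity of the discrete action explicit (a fact that may be useful in its own right). The paper's proof is shorter because it piggybacks on the already-established duality; yours is essentially self-contained---indeed you could even drop the appeal to \Cref{lem:Legendre} for existence of minimizing fluxes, since partial minimization of a jointly convex function over an affine constraint set (here $\{(\brho,\bF):\text{discrete divergence of }\bF=\bmu-\brho\}$) is automatically convex whether or not the infimum is attained.
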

\begin{proof}
The function $\brho\mapsto \Psi_\Tt(\brho;\bmu-\brho)$ is proper since it is equal to $0$ at $\brho = \bmu$. 
Then it follows from~\eqref{eq:Legendre} that 
\be\label{eq:PsiTt}
\Psi_\Tt(\brho;\bmu-\brho) = \sup_{\bphi} \langle \bmu-\brho, \bphi \rangle_\Tt - \Psi_\Tt^*(\brho;\bphi).
\ee
Since  $\brho\mapsto \Psi_\Tt^*(\brho;\bphi)$ is linear, $\Psi_\Tt(\brho;\bmu-\brho)$ is defined as the supremum of linear functions, whence it is convex.
\end{proof}

\subsection{A variational upstream mobility Finite Volume scheme}\label{ssec:scheme}

The finite volume discretization replaces the functions $\rho_\tau^n, \phi_\tau^n$ at time step $n \geq 1$ defined on $\Omega$ with the 
vectors $\brho^n \in \R_+^\Tt$ and $\bphi^n \in \mathbb{R}^{\Tt}$. In each cell $K$, the restriction of each of these functions is 
approximated by a single real number $\rho_K^n, \phi_K^n$, which can be thought as its mean value located 
in the cell center $\x_K$. Given $\brho^0 \in \R_+^\Tt$, 
the space $\bbP_\Tt$ which is the discrete counterpart of $\bbP(\O)$ is then defined by 
\[
\bbP_\Tt = \left\{ \brho \in \R_+^\Tt \; \middle| \; \langle \brho, \1 \rangle_\Tt = \langle \brho^0, \1 \rangle_\Tt \right\}
=(\brho^0 + \R_0^\Tt) \cap \R_+^\Tt.
\]
It is compact. 
The energy $\Ee$ is discretized into a strictly convex functional $\Ee_\Tt \in C^1(\R_+^\Tt ;\R_+)$ that we do not 
specify yet. We refer to Sections~\ref{sec:conv} and~\ref{sec:num} for explicit examples.

We have introduced all the necessary material to introduce our numerical scheme, which combines
upstream weighted Finite Volumes for the space discretization and the LJKO time discretization:
\be\label{eq:LJKO_0}
\brho^n \in \underset{\brho \in \bbP_\Tt}{\text{argmin}}\frac1{\tau} \Psi_\Tt(\brho;\brho^{n-1}-\brho) + \Ee_\Tt(\brho), \qquad n \geq 1. 
\ee
A further characterization of the scheme is needed for its practical implementation, but the condensed expression~\eqref{eq:LJKO_0} 
already provides crucial informations gathered in the following theorem. Note in particular that our scheme 
automatically preserves mass and the positivity since the solutions $\left(\brho^n\right)_{n\geq 1}$ belong to $\bbP_\Tt$.
\begin{thm}\label{thm:LJKO}
For all $n \geq 1$, there exists a unique solution $\brho^n \in \bbP_\Tt$ to~\eqref{eq:LJKO_0}.
Moreover, energy is dissipated along the time steps. More precisely, 
\be\label{eq:NRJ_0}
\Ee_\Tt(\brho^n)\leq \Ee_\Tt(\brho^n) + \frac1{\tau} \Psi_\Tt(\brho^n;\brho^{n-1}-\brho^{n}) \leq \Ee_\Tt(\brho^{n-1}), \qquad \forall n \geq 1.
\ee
\end{thm}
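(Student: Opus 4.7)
The plan is to apply the direct method of the calculus of variations to the functional
\[
J_n(\brho) := \tfrac1{\tau} \Psi_\Tt(\brho;\brho^{n-1}-\brho) + \Ee_\Tt(\brho), \qquad \brho \in \bbP_\Tt,
\]
and then deduce the energy inequality by comparing the minimum to the value at the trivial competitor $\brho^{n-1}$. I proceed by induction on $n$: assuming $\brho^{n-1} \in \bbP_\Tt$ (true for $n=1$ by hypothesis on $\brho^0$), I work on the convex compact polytope $\bbP_\Tt = (\brho^{n-1} + \R^\Tt_0)\cap \R_+^\Tt$, which is exactly the domain on which Lemma~\ref{lem:convex} applies with $\bmu = \brho^{n-1}$.

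For existence and uniqueness, the first step is to verify that $J_n$ is convex, lower semi-continuous, proper, and strictly convex on $\bbP_\Tt$. Convexity of $\brho \mapsto \Psi_\Tt(\brho;\brho^{n-1}-\brho)$ comes directly from Lemma~\ref{lem:convex}. Lower semi-continuity is read off the proof of that lemma: from the representation~\eqref{eq:PsiTt} and the fact that $\brho \mapsto \Psi_\Tt^*(\brho;\bphi)$ is linear (hence continuous) in $\brho$ for fixed $\bphi$, the function $\brho \mapsto \Psi_\Tt(\brho;\brho^{n-1}-\brho)$ is a supremum of continuous affine functions, hence convex and l.s.c. on $\bbP_\Tt$. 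Properness follows because $\Psi_\Tt(\brho^{n-1};\0)=0$ (achieved by the zero flux in~\eqref{eq:H-1rhoTt}), so $J_n(\brho^{n-1}) = \Ee_\Tt(\brho^{n-1}) < +\infty$. Strict convexity of $J_n$ is inherited from the strict convexity of $\Ee_\Tt$ plus the mere convexity of the dissipative term. Since $\bbP_\Tt$ is compact and $J_n$ is l.s.c., Weierstrass gives existence of a minimizer, and strict convexity makes it unique. This minimizer $\brho^n$ automatically belongs to $\bbP_\Tt$, which closes the induction step and ensures mass conservation and non-negativity.

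The energy inequality~\eqref{eq:NRJ_0} is then essentially free. By the minimization property of $\brho^n$ tested against the competitor $\brho^{n-1}\in \bbP_\Tt$,
\[
\Ee_\Tt(\brho^n) + \tfrac1{\tau}\Psi_\Tt(\brho^n;\brho^{n-1}-\brho^n) = J_n(\brho^n) \leq J_n(\brho^{n-1}) = \Ee_\Tt(\brho^{n-1}),
\]
using again $\Psi_\Tt(\brho^{n-1};\0)=0$. The left-hand inequality of~\eqref{eq:NRJ_0} is nothing but the non-negativity of $\Psi_\Tt$, stated in~\eqref{eq:H-1rhoTt}.

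The only delicate point, and the one that requires a bit of attention, is the lower semi-continuity of the dissipation in the first argument: because of the upwind convention~\eqref{eq:upwind} and the convention $(F_\sig)^2/(2\rho_\sig) = +\infty$ when $\rho_\sig=0$ and $F_\sig>0$, the infimum defining $\Psi_\Tt(\brho;\cdot)$ is not obviously l.s.c.\ jointly in $(\brho,\bF)$. I would bypass this entirely by never touching the primal formulation and relying only on the dual identity from Lemma~\ref{lem:Legendre}, which expresses $\Psi_\Tt(\brho;\brho^{n-1}-\brho)$ as a supremum of affine functions of $\brho$ as above; this also sidesteps the need to argue separately about cells where the discrete density vanishes.
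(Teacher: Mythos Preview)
Your proof is correct and follows essentially the same approach as the paper: direct minimization of a proper, l.s.c., strictly convex functional on the compact set $\bbP_\Tt$ (via Lemma~\ref{lem:convex} and the assumptions on $\Ee_\Tt$), followed by testing the minimizer against the competitor $\brho^{n-1}$. You are in fact slightly more careful than the paper, since Lemma~\ref{lem:convex} only states properness and convexity, and you correctly extract lower semi-continuity from the dual representation~\eqref{eq:PsiTt} as a supremum of affine functions.
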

\begin{proof} The functional $\brho \mapsto \frac1{\tau} \Psi_\Tt(\brho;\brho^{n-1}-\brho) + \Ee_\Tt(\brho)$ l.s.c. and strictly convex on 
the compact set $\bbP_\Tt$ in view of Lemma~\ref{lem:convex} and of the assumptions on $\Ee_\Tt$. 
Moreover, it is proper since $\brho^{n-1}$ belongs to its domain.
Therefore, it admits a unique minimum on $\bbP_\Tt$. The energy / energy dissipation estimate~\eqref{eq:NRJ_0} is 
obtained by choosing $\brho = \brho^{n-1}$ as a competitor in~\eqref{eq:LJKO_0}.
\end{proof}

In view of~\eqref{eq:PsiTt}, and after rescaling the dual variable $\bphi \leftarrow \frac\bphi\tau$, 
solving~\eqref{eq:LJKO_0} amounts to solve the saddle point problem 
\be\label{eq:LJKO_primal}
\inf_{\brho\geq \0} \sup_{\bphi} \left\langle \brho^{n-1} - \brho, \bphi \right\rangle_\Tt - 
\frac\tau2 \sum_{\substack{\sig \in \Sig\\\sig = K|L}} a_\sig \rho_\sig (\phi_K - \phi_L)^2 + \Ee_\Tt(\brho).
\ee
which is equivalent to its dual problem
\be\label{eq:LJKO_dual}
\sup_{\bphi}\inf_{\brho\geq \0}  \left\langle \brho^{n-1} - \brho, \bphi \right\rangle_\Tt - 
\frac\tau2 \sum_{\substack{\sig \in \Sig\\\sig = K|L}} a_\sig \rho_\sig (\phi_K - \phi_L)^2 + \Ee_\Tt(\brho).
\ee
Our strategy for the practical computation of the solution to~\eqref{eq:LJKO_0} is to solve the system 
corresponding to the optimality conditions of~\eqref{eq:LJKO_dual}. So far, we did not take advantage 
of the upwind choice of the mobility~\eqref{eq:upwind-1} (we only used the linearity of $(\brho, \bphi) \mapsto \left(\rho_\sig\right)_{\sig \in \Sig}$ 
in the proofs of Lemmas~\ref{lem:Legendre} and~\ref{lem:convex}, which also holds true for a centered choice of the mobilities). 
The upwinding will be key in the proof of the following theorem, which, roughly speaking, 
states that there is no need of a Lagrange multiplier for the constraint $\brho \geq \0$.

\begin{thm}\label{thm:existence}
The unique solution $(\brho^n,\bphi^n)$ to system 
\begin{equation}\label{LJKOh}
\begin{cases}
\ds m_K \phi^n_K +\frac{\tau}{2} \sum_{\sigma \in \Sigma_{K}} a_\sig \big((\phi^n_K-\phi^n_L)^+\big)^2= \frac{\p\Ee_\Tt}{\partial \rho_K}(\brho^n),  \\[15pt]
\ds (\rho^n_K-\rho^{n-1}_K) m_K + \tau\sum_{\sigma \in \Sigma_K} a_\sig \rho^n_{\sigma} (\phi^n_K-\phi^n_L) = 0,
\end{cases}
\quad \forall K \in \Tt,
\end{equation}
where $\rho_\sig^n$ denotes the upwind value, i.e.,
\[\rho^n_{\sigma} =
\begin{cases}
\rho^n_K &\text{if}\,\,  \phi^n_K > \phi^n_L, \\
\rho^n_L &\text{if}\,\,  \phi^n_K < \phi^n_L, 
\end{cases}
\quad
\forall \sigma = K|L \in \Sigma,
\]
is a saddle point of~\eqref{eq:LJKO_dual}.
\end{thm}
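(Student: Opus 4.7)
The plan is to exhibit a candidate pair $(\brho^n, \bphi^n)$ by combining the primal minimizer of Theorem~\ref{thm:LJKO} with a Kantorovich potential given by Lemma~\ref{lem:Legendre}, identify the two equations of~\eqref{LJKOh} as the KKT system of the primal/dual pair~\eqref{eq:LJKO_primal}--\eqref{eq:LJKO_dual}, and verify the saddle point inequalities using the convexity/concavity of the Lagrangian
$$\Phi(\brho, \bphi) := \langle \brho^{n-1} - \brho, \bphi \rangle_\Tt - \tau \Psi_\Tt^*(\brho; \bphi) + \Ee_\Tt(\brho)$$
that stem from the upwind choice of mobility. The central algebraic identity is
$$\sum_{\sig = K|L} a_\sig \rho_\sig(\phi_K - \phi_L)^2 = \sum_{K\in\Tt} \rho_K \sum_{\substack{\sig \in \Sig_K\\\sig=K|L}} a_\sig ((\phi_K - \phi_L)^+)^2,$$
which makes $\Phi(\cdot, \bphi)$ strictly convex on $\R_+^\Tt$ (the upstream cell being frozen by $\bphi$, the $\rho_\sig$'s are linear in $\brho$, and $\Ee_\Tt$ is strictly convex) and $\Phi(\brho, \cdot)$ concave in $\bphi$ (sum of $-(\cdot)^+$-squares).

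Theorem~\ref{thm:LJKO} provides a unique minimizer $\brho^n \in \bbP_\Tt$ of~\eqref{eq:LJKO_0}, with $\Psi_\Tt(\brho^n; \brho^{n-1} - \brho^n) < +\infty$ since $\brho^{n-1}$ is a competitor. Lemma~\ref{lem:Legendre} then yields a discrete Kantorovich potential $\wt\bphi^n$ solving~\eqref{eq:Kanto_disc}, from which $\bphi^n := \wt\bphi^n/\tau$ (modulo an additive constant fixed below) gives the second equation of~\eqref{LJKOh}. To obtain the first equation, I would use $J(\brho) := \frac{1}{\tau}\Psi_\Tt(\brho; \brho^{n-1}-\brho) + \Ee_\Tt(\brho) = \sup_\bphi \Phi(\brho, \bphi)$ together with the fact that $\Phi(\cdot, \bphi^n)$ supports $J$ from below at $\brho^n$ with equality, so that $\nabla_\brho \Phi(\brho^n, \bphi^n) \in \partial J(\brho^n)$. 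The Kuhn-Tucker conditions of the minimization of $J$ on $\bbP_\Tt$ then read
$$\frac{\partial \Ee_\Tt}{\partial \rho_K}(\brho^n) - m_K \phi_K^n - \frac{\tau}{2} \sum_{\sig \in \Sig_K} a_\sig ((\phi_K^n - \phi_L^n)^+)^2 = \lambda m_K + \mu_K \quad \forall K,$$
with $\lambda \in \R$ a multiplier for the mass constraint, absorbed into $\bphi^n$ by the additive-constant freedom of~\eqref{eq:Kanto_disc}, and $\mu_K \geq 0$ satisfying $\mu_K \rho_K^n = 0$, equal to $0$ automatically on cells where $\rho_K^n > 0$.

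Once~\eqref{LJKOh} is established, the saddle point property is straightforward: for fixed $\bphi^n$, the strict convexity of $\Phi(\cdot, \bphi^n)$ on $\R^\Tt$ combined with the first equation of~\eqref{LJKOh} (which is exactly its unconstrained critical point equation) yields that $\brho^n$ is its global minimum, hence \emph{a fortiori} on $\R_+^\Tt$; for fixed $\brho^n$, the concavity of $\Phi(\brho^n, \cdot)$ combined with the second equation of~\eqref{LJKOh} (its critical point equation) yields that $\bphi^n$ is a global maximum. Uniqueness of $\brho^n$ follows from strict convexity of the primal, and uniqueness of $\bphi^n$ from the second equation (which determines $\bphi^n$ up to an additive constant on the mass-carrying part of the mesh) together with the first equation (which fixes the constant).

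The main obstacle lies in justifying that the multiplier $\mu_K$ vanishes on every cell, so that the first equation of~\eqref{LJKOh} holds as an equality everywhere. This is precisely where the upwinding is decisive: if $\rho_K^n = 0$, a direct inspection of the Kantorovich equation~\eqref{eq:Kanto_disc} forces $\rho_K^{n-1} = 0$ and $\phi_K^n \geq \phi_L^n$ for each neighbor $L$ with $\rho_L^n > 0$, leaving $\phi_K^n$ free to be raised on empty cells without breaking~\eqref{eq:Kanto_disc}; the strict monotonicity of the left hand side of the first equation in $\phi_K^n$ then allows one to tune $\phi_K^n$ so that $\mu_K = 0$. Without the upwind choice of the mobility, this freedom would not exist, and~\eqref{LJKOh} would have to be written with an additional Lagrange multiplier on cells of vanishing density.
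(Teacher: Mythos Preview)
Your approach is the same as the paper's: start from the primal minimizer $\brho^n$ of Theorem~\ref{thm:LJKO}, pick a Kantorovich potential via Lemma~\ref{lem:Legendre}, write the KKT system with a sign-constrained multiplier on the vacuum set $\Zz^n=\{K:\rho_K^n=0\}$, and then use the upwind structure to raise the potential on $\Zz^n$ until the multiplier disappears. Your saddle-point verification via convexity/concavity of $\Phi$ is also exactly the right mechanism.

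The one genuine gap is the ``tuning'' step. You invoke strict monotonicity of the left-hand side of the HJ equation in the single variable $\phi_K^n$, but when $\Zz^n$ contains several adjacent cells the HJ equations there are \emph{coupled}: raising $\phi_K^n$ changes the term $((\phi_L^n-\phi_K^n)^+)^2$ in the equation for a neighbouring $L\in\Zz^n$, so the $\phi_K^n$'s cannot be adjusted one at a time. What is needed is that the nonlinear map $\bGg$ with components $\Gg_K(\bphi)=\phi_K+\frac{\tau}{2m_K}\sum_{\sig=K|L}a_\sig((\phi_K-\phi_L)^+)^2$ is globally invertible with a \emph{monotone} inverse (a discrete comparison principle), so that the restricted HJ system on $\Zz^n$ with right-hand side $\frac1{m_K}\frac{\partial\Ee_\Tt}{\partial\rho_K}(\brho^n)$ admits a unique solution that dominates the original Kantorovich values. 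The paper isolates exactly this as Lemma~\ref{lem:HJ} and applies it on $\Zz^n$; the same lemma then yields uniqueness of $\bphi^n$ on all of $\Tt$, whereas your uniqueness argument via the continuity equation only controls $\bphi^n$ on the support $\Pp^n$.

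A minor remark: the mass-constraint multiplier $\lambda$ you introduce is not present in the paper's treatment because the saddle formulation~\eqref{eq:LJKO_dual} is posed over $\brho\ge\0$ rather than $\bbP_\Tt$; mass conservation is enforced automatically by the optimality condition in $\bphi$. Your device of absorbing $\lambda$ into the additive constant of $\bphi^n$ is of course equivalent.
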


System~\eqref{LJKOh} is the discrete counterpart of~\eqref{eq:LJKO-oc}, whose derivation relied on the monotonicity 
of the inverse of the operator $\phi \mapsto \phi  + \frac\tau2|\grad\phi|^2$. 
Before proving Theorem~\ref{thm:existence}, let us show that the space discretization preserves 
this property at the discrete level. To this end, we introduce the functional $\bGg = \left(\Gg_K\right)_K \in C^1(\R^{\Tt};\R^{\Tt})$ defined by 
\[
\Gg_K(\bphi) := \phi_K + \frac{\tau}{2m_K} \sum_{\substack{\sig \in\Sigma_K\\\sig = K|L}} a_\sig \left( (\phi_K - \phi_L)^+\right)^2 , 
\qquad \forall K\in\Tt.
\]
\begin{lem}\label{lem:HJ}
Given $\bff \in \R^{\Tt}$, there exists a unique solution to $\bGg(\bphi) = \bff$, and it satisfies 
\be\label{eq:max_principle}
\min \bff \leq \bphi \leq \max \bff.
\ee
Moreover, let $\bphi, \wt\bphi$ be the solutions corresponding to $\bff$ and $\wt\bff$ respectively, 
then 
\be\label{eq:monotonie}
\bff\geq \wt\bff \quad \implies \quad \bphi\geq\wt\bphi.
\ee
\end{lem}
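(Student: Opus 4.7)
My plan is to tackle the three claims separately, exploiting a convex variational formulation for existence and uniqueness, direct inspection of extreme cells for the bounds, and a discrete maximum principle argument for monotonicity.

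For existence and uniqueness, I would introduce the functional
\[
J(\bphi) := \frac12 \sum_{K\in\Tt} m_K \phi_K^2 + \frac{\tau}{6} \sum_{\substack{\sig\in\Sig\\\sig=K|L}} a_\sig \bigl((\phi_K-\phi_L)^+\bigr)^3 - \sum_{K\in\Tt} m_K f_K \phi_K.
\]
A direct computation (using $\partial_x (x^+)^3 = 3(x^+)^2$ and pairing each face contribution to both endpoints) shows $\partial_{\phi_K} J(\bphi) = m_K(\Gg_K(\bphi)-f_K)$, so critical points of $J$ solve $\bGg(\bphi)=\bff$. The quadratic term makes $J$ strictly convex and coercive on $\R^\Tt$; hence $J$ admits a unique minimizer, giving existence and uniqueness.

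For the bounds~\eqref{eq:max_principle}, let $K^\star\in\Tt$ achieve $\phi_{K^\star}=\max_K \phi_K$. Then $(\phi_{K^\star}-\phi_L)^+ \geq 0$ on every face incident to $K^\star$, so $f_{K^\star}=\Gg_{K^\star}(\bphi)\geq \phi_{K^\star}=\max \bphi$, which yields $\max\bphi\leq \max\bff$. Symmetrically, if $K_\star$ achieves $\phi_{K_\star}=\min_K \phi_K$, then $(\phi_{K_\star}-\phi_L)^+ = 0$ on every incident face, so $f_{K_\star}=\phi_{K_\star}=\min\bphi$, and hence $\min\bff\leq \min\bphi$.

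The main step is the monotonicity~\eqref{eq:monotonie}, which I would prove by contradiction in the spirit of a discrete comparison principle. Assume $\bff\geq\wt\bff$ but $\bphi\not\geq\wt\bphi$, and pick $K^\star\in\Tt$ that minimizes $K\mapsto \phi_K-\wt\phi_K$, so that $\phi_{K^\star}-\wt\phi_{K^\star}<0$ and $\phi_L-\phi_{K^\star}\geq \wt\phi_L-\wt\phi_{K^\star}$ for every neighbor $L\in\Nn_{K^\star}$. The second inequality rewrites as $\phi_{K^\star}-\phi_L \leq \wt\phi_{K^\star}-\wt\phi_L$, and since $t\mapsto (t^+)^2$ is non-decreasing on $\R$, this implies $((\phi_{K^\star}-\phi_L)^+)^2 \leq ((\wt\phi_{K^\star}-\wt\phi_L)^+)^2$ on each face $\sig=K^\star|L\in\Sig_{K^\star}$. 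Plugging these bounds into
\[
f_{K^\star}-\wt f_{K^\star}=\Gg_{K^\star}(\bphi)-\Gg_{K^\star}(\wt\bphi)=(\phi_{K^\star}-\wt\phi_{K^\star})+\frac{\tau}{2m_{K^\star}}\!\!\sum_{\substack{\sig\in\Sig_{K^\star}\\\sig=K^\star|L}}\!\! a_\sig\bigl[((\phi_{K^\star}-\phi_L)^+)^2-((\wt\phi_{K^\star}-\wt\phi_L)^+)^2\bigr]
\]
gives $f_{K^\star}-\wt f_{K^\star}\leq \phi_{K^\star}-\wt\phi_{K^\star}<0$, contradicting $\bff\geq\wt\bff$. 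Hence $\bphi\geq\wt\bphi$. The sign-preservation of the squaring via the upwind $(\cdot)^+$ truncation is the crucial structural feature here; a centered discretization would not yield such a clean sign chain, which is exactly the role of upwinding announced before the statement.
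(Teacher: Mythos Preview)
Your monotonicity argument is correct and essentially identical to the paper's; your direct derivation of the bounds~\eqref{eq:max_principle} at extreme cells is also correct (the paper instead deduces them from monotonicity by comparing with the constants $(\min\bff)\1$ and $(\max\bff)\1$, which are fixed points of $\bGg$).

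The genuine gap is in your existence/uniqueness argument. The operator $\bphi\mapsto(m_K\Gg_K(\bphi))_K$ is \emph{not} a gradient, so no functional $J$ can have $\partial_{\phi_K}J=m_K(\Gg_K-f_K)$. Indeed, for a face $\sigma=K|L$,
\[
\partial_{\phi_L}\bigl(m_K\Gg_K\bigr)=-\tau a_\sigma(\phi_K-\phi_L)^+,\qquad
\partial_{\phi_K}\bigl(m_L\Gg_L\bigr)=-\tau a_\sigma(\phi_L-\phi_K)^+,
\]
and these are generically different (one vanishes exactly when the other does not), so the Jacobian is not symmetric. Concretely, if in your sum each face is counted once with a fixed ordering $K|L$, then $\partial_{\phi_L}$ of $\frac{\tau}{6}a_\sigma((\phi_K-\phi_L)^+)^3$ equals $-\frac{\tau}{2}a_\sigma((\phi_K-\phi_L)^+)^2$, not the required $\frac{\tau}{2}a_\sigma((\phi_L-\phi_K)^+)^2$; summing over ordered pairs instead yields the odd expression $\frac{\tau}{2}a_\sigma(\phi_K-\phi_L)|\phi_K-\phi_L|$, again wrong. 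The upwind truncation that makes the comparison principle work is precisely what breaks the variational structure.

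The paper circumvents this: uniqueness is an immediate corollary of monotonicity (apply~\eqref{eq:monotonie} with $\bff=\wt\bff$ in both directions), and existence follows from a Leray--Schauder fixed-point argument, the a~priori bounds~\eqref{eq:max_principle} being uniform in the homotopy parameter $\tau\geq 0$. You already have all the ingredients: simply reorder your proof to establish monotonicity first, deduce uniqueness and the bounds, and then invoke a degree/fixed-point argument for existence.
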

\begin{proof}
Given $\bff\geq\wt\bff$ and $\bphi, \wt\bphi$ corresponding solutions, let $K^*$ be the cell such that
\[
\phi_{K^*} - \tilde{\phi}_{K^*} = \min_{K\in \mathcal{T}} \big( \phi_K - \tilde{\phi}_K \big).
\]
Then, for all the neighboring cells $L$ of $K^*$, it holds $\phi_{K^*} - \tilde{\phi}_{K^*} \le \phi_L - \tilde{\phi}_L$ and therefore $\phi_{K^*} -\phi_L  \le  \tilde{\phi}_{K^*} - \tilde{\phi}_L$ which implies
\be\label{eq=justela}
\frac{\tau}{2m_K} \sum_{\substack{\sig \in\Sig_{K^*}\\\sig = K^*|L}} a_\sig \left( (\phi_{K^*} -\phi_L )^+\right)^2 \leq 
\frac{\tau}{2m_K} \sum_{\substack{\sig \in\Sig_{K^*}\\\sig = K^*|L}}a_\sig \left( (\tilde{\phi}_{K^*} - \tilde{\phi}_L)^+\right)^2.
\ee
Recall $\bff\geq\wt\bff$ so $\Gg_{K^*}(\bphi) \ge \Gg_{K^*}(\tilde{\bphi})$ together with \eqref{eq=justela} it yields $\phi_{K^*} \ge \tilde{\phi}_{K^*}$.
Finally as in $K^*$ the difference $\phi_K - \tilde{\phi}_K$ is minimal, we obtain $\phi_K \ge \tilde{\phi}_K$ for all $K \in \mathcal{T}$. The uniqueness of the solution $\bphi$ of $\bGg(\bphi) = \bff$ follows directly.
The maximum principle \eqref{eq:max_principle} is also a straightforward consequence of
\eqref{eq:monotonie} as one can compare $\bphi$ to $(\min \bff)\1$ and $(\max \bff)\1$ 
which are fixed points of $\bGg$. Finally, existence follows from Leray-Schauder fixed-point 
theorem~\cite{LS34} as the bounds \eqref{eq:max_principle} are uniform whatever $\tau \geq0$.
\end{proof}

With Lemma~\ref{lem:HJ} at hand, we can now prove Theorem~\ref{thm:existence}.

\begin{proof}[Proof of Theorem~\ref{thm:existence}]
Uniqueness of the solution $\brho^n$ to~\eqref{eq:LJKO_0} was already proved in Theorem~\ref{thm:LJKO}.
Owing to~\eqref{eq:NRJ_0}, $\Psi_\Tt(\brho^n; \brho^{n-1}- \brho^n )$ is finite. So Lemma~\ref{lem:Legendre} 
ensures the existence of a discrete Kantorovitch potential $\bphi^n$ satisfying (after a suitable rescaling by $\tau^{-1}$)
\be\label{eq:LJKOh-oc2}
(\rho^n_K-\rho^{n-1}_K) m_K + \tau \sum_{\sigma \in \Sigma_K} a_{\sigma} \rho^n_{\sigma} (\phi^n_K-\phi^n_L) = 0, 
\qquad \forall K \in \Tt.
\ee
The above condition is the optimality condition w.r.t. $\bphi$ in~\eqref{eq:LJKO_dual}. The optimality condition w.r.t. $\brho$ 
writes 
\be\label{eq:LJKOh-oc1}
m_K \phi^n_K +\frac{\tau}{2} \sum_{\sigma \in \Sigma_{0,K}} a_{\sigma} \big((\phi^n_K-\phi^n_L)^+\big)^2 
= \frac{\partial \Ee_\Tt}{\partial \rho_K}(\brho^n) + m_K \pi_K^n, \qquad \forall K \in \Tt
\ee
where the Lagrange multiplier $\bpi^n = \left(\pi_K^n\right)_{K} \leq \0$  for the constraint $\brho \geq \0$ satisfies $\langle \bpi^n,\brho^n\rangle_\Tt = 0$. 
To prove Theorem~\ref{thm:existence}, we thus has to prove that one can set $\bpi^n=\0$ and that the solution to system~\eqref{LJKOh} is unique. 

Let $(\brho^n,\wt\bphi^n)$ be a solution to~\eqref{eq:LJKOh-oc2}--\eqref{eq:LJKOh-oc1} with some possibly 
non-zero Lagrange multiplier $\bpi^n\leq\0$.
Denote by 
\[
\Zz^n = \{K\in\Tt\;|\;\rho_K^n=0\}, \qquad \Pp^n = \{K\in\Tt\;|\;\rho_K^n>0\} = {(\Zz^n)}^c.
\] 
Then we deduce from~\eqref{eq:LJKOh-oc2} and from the upstream choice of the mobility that 
\[
0 \leq \sum_{L\in\Nn_K} a_{KL} \rho_L^n (\wt\phi_L^n - \wt\phi_K^n)^+ = -\rho_K^{n-1} \frac{m_K}{\dt} \leq 0, 
\qquad \forall K\in\Zz^n.
\]
Hence the following alternative holds 
for all the neighbours $L\in\Nn_K$ of $K\in \Zz^n$:
\be\label{eq:L_Zz^n}
L \in \Zz^n \quad \text{or}\quad \wt\phi_K^n \geq \wt\phi_L^n.
\ee

Let $\bphi^n$ be such that $\bphi^n \geq \wt\bphi^n$ with $\phi_K^n = \wt\phi_K^n \;\text{on}\;\Pp^n$, 
then it follows from the upwinding that~\eqref{eq:LJKOh-oc2} holds for all $K\in\Tt$, while 
\eqref{eq:LJKOh-oc1} holds for $K\in\Pp^n$. 

Reproducing the proof of Lemma~\ref{lem:HJ}, there exists a unique solution 
$\left(\ov \phi_K^n\right)_{K\in\Zz^n}$ to the system 
\[
\Gg_K\left(\left(\ov \phi_L^n\right)_{L\in\Zz^n}\right) =\frac1{m_K} \frac{\p\Ee_\Tt}{\p\rho_K}(\brho^n), \qquad \forall K\in\Zz^n, 
\]
and, since $ \frac{\p\Ee_\Tt}{\p\rho_K}(\brho^n) \geq  \frac{\p\Ee_\Tt}{\p\rho_K}(\brho^n)+\pi_K^n$, the monotonicity of $\bGg$ yields
$\ov\phi_K^n \geq \wt\phi_K^n$ for all $K\in\Zz^n$.
Then define $\bphi^n\in\R^\Tt$ by 
\[
\phi_K^n = \begin{cases}
\ov \phi_K^n & \text{if}\;K\in\Zz^n, \\
\wt\phi_K^n&\text{if}\;K\in\Pp^n, 
\end{cases}
\]
 it follows from~\eqref{eq:L_Zz^n} that $(\brho^n,\bphi^n)$ fulfills~\eqref{eq:LJKOh-oc2}. 
Moreover, it follows from the upwinding in~\eqref{eq:LJKOh-oc1} and from~\eqref{eq:L_Zz^n} that  
Equation \eqref{eq:LJKOh-oc1} for $K\in\Pp^n$ does not depend on $\left(\bphi_K^n\right)_{K\in\Zz^n}$, 
so that $(\brho^n,\bphi^n)$ satisfies also~\eqref{eq:LJKOh-oc1}, but with $\bpi^n=\0$. 
Finally, owing to Lemma~\ref{lem:HJ}, $\bphi^n$ is unique, concluding the proof of Theorem~\ref{thm:existence}.
\end{proof}

\subsection{Comparison with the classical backward Euler discretization}\label{ssec:Euler}

The scheme~\eqref{eq:LJKO_0} is based on a ``first discretize then optimize'' approach. 
We have built a discrete counterpart of $\frac12 W_2^2$ and a discrete energy $\Ee_\Tt$, 
then the discrete dynamics is chosen in an optimal way by~\eqref{eq:LJKO_0}.
In opposition, the continuous equation~\eqref{eq:pde} can be thought as the 
Euler-Lagrange optimality condition for the steepest descent of the energy. 
A classical approach to approximate the optimal dynamics is to discretize directly~\eqref{eq:pde}, 
leading to what we call a ``first optimize then discretize'' approach. 
It is classical for the semi-discretization in time of~\eqref{eq:pde} to use a backward Euler scheme. 
If one combines this technic with upstream weighted Finite Volumes, we obtain the following fully discrete scheme:
\begin{equation}\label{eq:FV}
 (\check\rho^n_K-\rho^{n-1}_K)m_K + \tau \sum_{\sigma \in \Sigma_{K}}a_{\sigma}   \check \rho^n_{\sigma} (\check\phi^n_K-\check\phi^n_L) = 0, 
 \quad \text{with}\quad
\check\phi^n_K = \frac1{m_K}\frac{\partial \Ee_\Tt}{\partial \rho_K}(\check\brho^n),
\qquad \forall K \in \mathcal{T}.
\end{equation}
This scheme has no clear variational structure in the sense that, to our knowledge, $\check \brho^n$ is no longer the solution 
to an optimization problem. However, it shares some common features with our scheme~\eqref{eq:LJKO_0}: 
it is mass and positivity preserving as well as energy diminishing.
\begin{prop}\label{prop:FV}
Given $\brho^{n-1}\in\bbP_\Tt$, there exists at least one solution $(\check\brho^n,\check\bphi^n)\in\bbP_\Tt\times\R^\Tt$ to system~\eqref{eq:FV}, which satisfies 
\be\label{eq:EDI_FV}
\Ee_\Tt(\check \brho^n) + \frac1\tau \Psi_\Tt(\check\brho^n;\brho^{n-1}- \check \brho^n) + 
\tau \Psi_\Tt^*(\check\brho^n; \check \bphi^n) \leq \Ee_\Tt(\brho^{n-1}).
\ee
\end{prop}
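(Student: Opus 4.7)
The plan is to first derive the energy dissipation inequality~\eqref{eq:EDI_FV} as an a priori estimate on any solution of~\eqref{eq:FV}, and then to deduce existence by a topological degree argument based on the resulting uniform bounds. Note that, unlike the LJKO scheme~\eqref{eq:LJKO_0}, system~\eqref{eq:FV} is not the optimality condition of a convex optimization problem, so a direct variational argument is unavailable and the existence proof has to be non-constructive.

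For the a priori estimate, I would combine three ingredients. First, the convexity of $\Ee_\Tt$ together with the relation $m_K\check\phi^n_K = \partial\Ee_\Tt/\partial\rho_K(\check\brho^n)$ gives
\[
\Ee_\Tt(\brho^{n-1}) - \Ee_\Tt(\check\brho^n) \geq \langle \brho^{n-1}-\check\brho^n, \check\bphi^n\rangle_\Tt.
\]
Second, testing the discrete continuity equation of~\eqref{eq:FV} against $\check\bphi^n$ and reorganizing the double sum edge by edge using the antisymmetry encoded in~\eqref{eq:cons_sig}, each edge $\sigma = K|L$ contributes $a_\sigma\check\rho^n_\sigma(\check\phi^n_K-\check\phi^n_L)^2$, so
\[
\langle \brho^{n-1}-\check\brho^n, \check\bphi^n\rangle_\Tt = 2\tau\Psi^*_\Tt(\check\brho^n;\check\bphi^n).
\]
Third, the discrete continuity equation in~\eqref{eq:FV} is exactly~\eqref{eq:Kanto_disc} read for the pair $(\bh,\bphi)=(\brho^{n-1}-\check\brho^n,\tau\check\bphi^n)$ with weight $\check\brho^n$, since upwinding with respect to $\tau\check\bphi^n$ coincides with upwinding with respect to $\check\bphi^n$. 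Lemma~\ref{lem:Legendre} combined with the quadratic homogeneity of $\Psi^*_\Tt(\check\brho^n;\cdot)$ then delivers
\[
\frac{1}{\tau}\Psi_\Tt(\check\brho^n;\brho^{n-1}-\check\brho^n) = \tau\Psi^*_\Tt(\check\brho^n;\check\bphi^n).
\]
Writing $2\tau\Psi^*_\Tt = \tau\Psi^*_\Tt + \frac{1}{\tau}\Psi_\Tt$ and chaining the three relations produces~\eqref{eq:EDI_FV}.

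For existence, I would introduce a homotopy parameter $\lambda\in[0,1]$ by multiplying the flux term in~\eqref{eq:FV} by $\lambda$; at $\lambda=0$ the unique solution is $\check\brho^n=\brho^{n-1}$ with $\check\phi^n_K = \frac{1}{m_K}\partial\Ee_\Tt/\partial\rho_K(\brho^{n-1})$, and at $\lambda=1$ one recovers~\eqref{eq:FV}. The a priori estimate derived above holds uniformly in $\lambda$, pinning $\check\brho^n$ in the compact set $\bbP_\Tt$; continuity of $\brho \mapsto \partial\Ee_\Tt/\partial\rho_K$ on $\bbP_\Tt$ then yields a uniform bound on $\check\bphi^n$. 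A Leray-Schauder continuation argument in the spirit of the proof of Lemma~\ref{lem:HJ} concludes existence at $\lambda=1$. The hard part of this step is the positivity preservation $\check\brho^n \geq \0$ along the homotopy: the admissible set of~\eqref{eq:FV} is not enforced by a variational constraint as in Theorem~\ref{thm:LJKO}, so nonnegativity has to be read off directly from the upwind structure. Inspecting the equation at a cell $K^\star$ realizing $\min_K\check\rho^n_K$, the upwind choice makes the outgoing contributions proportional to $\check\rho^n_{K^\star}$ and couples the incoming ones to neighbor values with $\check\rho^n_L \geq \check\rho^n_{K^\star}$, which, combined with $\rho^{n-1}_{K^\star}\geq 0$, is what should force $\check\rho^n_{K^\star}\geq 0$; making this argument fully rigorous in the presence of signs changes in $\check\phi^n_K-\check\phi^n_L$ is the genuine technical obstacle of the proof.
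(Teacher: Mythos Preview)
Your derivation of the energy dissipation inequality~\eqref{eq:EDI_FV} is correct and matches the paper's argument exactly: the three ingredients (convexity of $\Ee_\Tt$, testing the conservation law against $\check\bphi^n$, and identifying $\tau\check\bphi^n$ as a discrete Kantorovich potential via Lemma~\ref{lem:Legendre}) are precisely what the paper uses.

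The gap is in the positivity step, and it propagates into the existence argument. Your proposed cell $K^\star$ realizing $\min_K\check\rho^n_K$ does not close the contradiction: for an incoming edge $\sigma=K^\star|L$ with $\check\phi^n_{K^\star}<\check\phi^n_L$, the upwind value is $\check\rho^n_L$, and minimality of $\check\rho^n_{K^\star}$ only gives $\check\rho^n_L\geq\check\rho^n_{K^\star}$, which may still be negative; the corresponding flux term $a_\sigma\check\rho^n_L(\check\phi^n_{K^\star}-\check\phi^n_L)$ is then positive and can destroy the sign of the sum. The paper's trick is different: assume $\Kk^n=\{K:\check\rho^n_K<0\}\neq\emptyset$ and pick $K^\star\in\Kk^n$ with \emph{maximal} $\check\phi^n_{K^\star}$ among $\Kk^n$. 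Then any neighbour $L$ with $\check\phi^n_L>\check\phi^n_{K^\star}$ is forced out of $\Kk^n$, hence $\check\rho^n_L\geq 0$, and every term in the flux sum at $K^\star$ is nonpositive, yielding $\check\rho^n_{K^\star}\geq\rho^{n-1}_{K^\star}\geq 0$, a contradiction.

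This also affects the logical order you propose. The EDI cannot serve as the primary a~priori bound for the degree argument, because both $\Psi_\Tt$ and $\Psi^*_\Tt$ are only defined (and nonnegative) for $\brho\geq\0$, and the convexity inequality for $\Ee_\Tt$ needs $\check\brho^n$ in the domain of $\Ee_\Tt$. The paper therefore establishes mass conservation and positivity \emph{first}, uses these (which are uniform in the homotopy parameter) as the compactness input for Leray--Schauder, and only afterwards derives~\eqref{eq:EDI_FV} as a property of the solution. Your homotopy in $\lambda$ is fine and equivalent to the paper's homotopy in $\tau$, but the bounds feeding it must be mass and positivity, not the EDI.
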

\begin{proof}
Summing~\eqref{eq:FV} over $K\in\Tt$ provides directly the conservation of mass, i.e., $\langle \check \brho^n, \1\rangle_\Tt = \langle \brho^{n-1}, \1\rangle_\Tt$.
Assume for contradiction that $\Kk^n = \left\{K\in\Tt\; \middle| \; \check \rho_K^n < 0\right\} \neq \emptyset,$ then choose 
$K^\star \in \Kk^n$ such that $\check \phi_{K^\star}^n \geq \check \phi_{K}^n$ for all $K\in\Kk^n$. Then it follows from 
the upwind choice of the mobility in~\eqref{eq:FV} that 
\[
 \sum_{\substack{\sigma \in \Sigma_{K^\star}\\\sig=K|L}}a_{\sigma}   \check \rho^n_{\sigma} (\check\phi^n_{K^\star}-\check\phi^n_L) \leq 0,
 \]
 so that $\check \rho_{K^\star}^n \geq \rho_{K^\star}^{n-1}\geq 0$, showing a contradiction. Therefore, $\Kk^n = \emptyset$ and $\check \brho^n \geq \0$.
 These two {\em a priori} estimates (mass and positivity preservation) are uniform w.r.t. $\tau\geq 0$, thus they are sufficient to prove the existence 
 of a solution $(\check \brho^n,\check \bphi^n)$ to~\eqref{eq:FV} thanks to a topological degree argument~\cite{LS34}.
 
 Let us now turn to the derivation of the energy / energy dissipation inequality~\eqref{eq:EDI_FV}.
 Multiplying~\eqref{eq:FV} by $\check \phi_K^n$ and summing over $K\in\Tt$ provides 
 \[
 \langle \check \brho^n - \brho^{n-1}, \check\bphi^n \rangle_\Tt + 2 \tau  \Psi_\Tt^*(\check\brho^n; \check \bphi^n) = 0. 
 \]
 The definition of $\check \bphi^n$ and the convexity of $\Ee_\Tt$ yield
$
  \langle \check \brho^n - \brho^{n-1}, \check\bphi^n \rangle_\Tt  \geq \Ee_\Tt(\check \brho^n) - \Ee_\Tt(\brho^{n-1}).
$
Thus to prove~\eqref{eq:EDI_FV}, it remains to check that 
\be\label{eq:=Dissip_FV}
\frac1\tau \Psi_\Tt(\check\brho^n; \brho^{n-1}-\check \brho^n) = \tau \Psi_\Tt^*(\check\brho^n; \check \bphi^n)=  \frac1\tau \Psi_\Tt^*(\check\brho^n; \tau\check \bphi^n).
\ee
In view of~\eqref{eq:Kanto_disc}, $\tau \check \bphi^n$ is a discrete Kantorovitch potential sending $\brho^{n-1}$ on $\check \brho^{n}$ for the 
mobility corresponding to $\check \brho^n$. Therefore \eqref{eq:=Dissip_FV} holds as a consequence of~\eqref{eq:=Dissip}.
\end{proof}

Next proposition provides a finer energy / energy dissipation estimate than~\eqref{eq:NRJ_0}, 
which can be thought as discrete counterpart to the energy / energy dissipation inequality (EDI) which is a characterization 
of generalized gradient flows~\cite{AGS08,Mie11}.
\begin{prop}\label{prop:EDI}
Given $\brho^{n-1} \in \bbP_\Tt$, let $\brho^n$ be the unique solution to~\eqref{eq:LJKO_0} and let $\check \brho^n$ be a solution to~\eqref{eq:FV}, 
then 
\[
\Ee_\Tt(\brho^n) + \tau \Psi_\Tt^*\left(\brho^n; \bphi^n\right) + \tau \Psi_\Tt^*\left(\check \brho^n; \check \bphi^n\right) \leq \Ee_\Tt(\brho^{n-1}),
\]
where $\check \bphi^n$ is defined by $m_K \check \phi_K^n = \frac{\p\Ee_\Tt}{\p \rho_K}(\check \brho^n)$ for all $K\in\Tt$.
\end{prop}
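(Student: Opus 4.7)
The plan is to exploit the variational characterization of $\brho^n$ by plugging $\check\brho^n$ as a competitor, and then combine the resulting inequality with the energy / energy dissipation estimate already established for the backward Euler solution in Proposition~\ref{prop:FV}.

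First I would observe that, thanks to Proposition~\ref{prop:FV}, $\check\brho^n$ lies in $\bbP_\Tt$ (mass conservation and non-negativity), so it is admissible in the minimization problem~\eqref{eq:LJKO_0}. Using $\brho = \check\brho^n$ as a competitor yields
\[
\frac{1}{\tau}\Psi_\Tt(\brho^n;\brho^{n-1}-\brho^n) + \Ee_\Tt(\brho^n)
\le \frac{1}{\tau}\Psi_\Tt(\check\brho^n;\brho^{n-1}-\check\brho^n) + \Ee_\Tt(\check\brho^n).
\]
By~\eqref{eq:EDI_FV} of Proposition~\ref{prop:FV}, the right-hand side is bounded by $\Ee_\Tt(\brho^{n-1}) - \tau\Psi_\Tt^*(\check\brho^n;\check\bphi^n)$, whence
\[
\frac{1}{\tau}\Psi_\Tt(\brho^n;\brho^{n-1}-\brho^n) + \Ee_\Tt(\brho^n) + \tau\Psi_\Tt^*(\check\brho^n;\check\bphi^n) \le \Ee_\Tt(\brho^{n-1}).
\]

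Next I would rewrite the dissipation term involving $\brho^n$ in its self-dual form. Theorem~\ref{thm:existence} provides the Kantorovitch-type relation~\eqref{eq:LJKOh-oc2}, which, after rescaling the potential to $\tau\bphi^n$, is exactly~\eqref{eq:Kanto_disc} for the mobility $\brho^n$ and the source $\brho^{n-1}-\brho^n$. Applying~\eqref{eq:=Dissip} from Lemma~\ref{lem:Legendre}, one gets $\Psi_\Tt(\brho^n;\brho^{n-1}-\brho^n) = \Psi_\Tt^*(\brho^n;\tau\bphi^n)$. Since $\Psi_\Tt^*(\brho^n;\cdot)$ is quadratic in its second argument (see~\eqref{eq:H1rhoTt}), this gives $\Psi_\Tt(\brho^n;\brho^{n-1}-\brho^n) = \tau^2\,\Psi_\Tt^*(\brho^n;\bphi^n)$, hence
\[
\frac{1}{\tau}\Psi_\Tt(\brho^n;\brho^{n-1}-\brho^n) = \tau\,\Psi_\Tt^*(\brho^n;\bphi^n).
\]
Substituting this identity in the previous inequality yields exactly the claimed estimate.

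I do not expect any major obstacle: the proof is essentially a bookkeeping exercise that combines the minimizing property of $\brho^n$, the already-proved EDI for the Euler solution, and the duality identity of Lemma~\ref{lem:Legendre}. The only delicate point is to keep track of the factor $\tau$ when switching between $\Psi_\Tt(\brho^n;\brho^{n-1}-\brho^n)$ (written in terms of fluxes) and $\Psi_\Tt^*(\brho^n;\bphi^n)$ (written in terms of potentials), since $\bphi^n$ corresponds to the scheme~\eqref{LJKOh} where a factor $\tau$ has been absorbed in the definition, consistently with the rescaling performed just before writing~\eqref{eq:LJKO_primal}.
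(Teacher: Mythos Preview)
Your proposal is correct and follows essentially the same approach as the paper: use $\check\brho^n$ as a competitor in~\eqref{eq:LJKO_0}, combine with~\eqref{eq:EDI_FV}, and invoke~\eqref{eq:=Dissip} to rewrite $\frac{1}{\tau}\Psi_\Tt(\brho^n;\brho^{n-1}-\brho^n)$ as $\tau\Psi_\Tt^*(\brho^n;\bphi^n)$. Your discussion of the $\tau$-rescaling is more explicit than the paper's, which simply cites~\eqref{eq:=Dissip}, but the argument is the same.
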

\begin{proof}
Since $\check \brho^n$ belongs to $\bbP_\Tt$, it is an admissible competitor for~\eqref{eq:LJKO_0}, thus
\be\label{eq:NRJ_1}
\Ee_{\Tt}(\brho^n) + \frac1\tau \Psi_\Tt(\brho^n; \brho^{n-1}-\brho^n) \leq \Ee_{\Tt}(\check \brho^n) + \frac1\tau \Psi_\Tt(\check \brho^n;  \brho^{n-1}-\check \brho^n).
\ee
Combining this with~\eqref{eq:EDI_FV} and bearing in mind that $\frac1\tau\Psi_\Tt(\brho^n; \brho^{n-1}-\brho^{n}) = \tau \Psi_\Tt^*\left(\brho^n; \bphi^n\right)$ thanks to~\eqref{eq:=Dissip}, we obtain the desired inequality~\eqref{eq:NRJ_1}.
\end{proof}

\section{Convergence in the Fokker-Planck case}\label{sec:conv}
In this section, we investigate the limit of the scheme when the time step $\tau$ and the size of the mesh $h_\Tt$ tend to $0$ 
in the specific case of the Fokker-Planck equation~\eqref{eq:FokkerPlanck}. The size of the mesh is defined by
$h_\Tt = \max_{K\in\Tt} h_K$ with $h_K = {\rm diam}(K)$. To this end, we consider a sequence $\left(\Tt_m, \ov\Sig_m, \left(\x_K\right)_{K\in\Tt_m}\right)_{m\geq 1}$ 
of admissible discretizations of $\O$ in the sense of Section~\ref{ssec:mesh} and a sequence $\left(\tau_m\right)_{m\geq 1}$ of time steps 
such that $\lim_{m\to\infty}\tau_m = \lim_{m\to\infty}h_{\Tt_m} = 0.$
We also make the further assumptions on the mesh sequence: there exists $\zeta>0$ such that, for all $m\geq 1$, 
\begin{subequations}\label{eq:reg_mesh}
\be\label{eq:reg_1}
h_K \leq \zeta d_\sig \leq \zeta^2 h_K, \qquad \forall \sig \in \Sig_K, \; \forall K\in\Tt_m, 
\ee
\be\label{eq:reg_2}
{\rm dist}(\x_K,\ov K) \leq \zeta h_K, \qquad \forall K \in \Tt_m, 
\ee
and
\be\label{eq:reg_3}
\sum_{\sig \in \sig_K} m_{\Delta_\sig} \leq \zeta m_K, \qquad \forall K \in \Tt_m.
\ee
\end{subequations}

Let $T>0$ be an arbitrary finite time horizon, then we assume for the sake of simplicity that 
$\tau_m = T/N_m$ for some integer $N_m$ tending to $+\infty$ with $m$.
For the ease of reading, we remove the subscript $m\geq 1$ when it appears to be unnecessary for understanding. 

Given $V \in C^2(\ov\O)$, we define the discrete counterpart of the energy~\eqref{eq:NRJ_FP} by 
\[
\Ee_{\Tt}(\brho)= \sum_{K\in\Tt} m_K \left[\rho_K \log\frac{\rho_K}{e^{-V_K}} - \rho_K + e^{-V_K}\right], \qquad \forall \brho \in \R_+^{\Tt}, 
\]
where $V_K = V(\x_K)$ for all $K\in\Tt$. In view of the above formula, there holds
\be\label{eq:FP_dE}
\frac{\p\Ee_{\Tt}}{\p\rho_K}(\brho) = m_K (\log(\rho_K) + V_K) \qquad \forall K\in\Tt.
\ee
Given an initial condition $\rhoe^0 \in \bbP(\O)$ with positive mass, i.e. $\int_\O \rhoe^0 \d\x >0$, and such that $\Ee(\rhoe^0) < \infty$, it 
is discretized into $\brho^0 = \left(\rho_K^0\right)_{K\in\Tt}$ defined by 
\be\label{eq:rhoK0}
\rho_K^0 = \frac1{m_K} \int_K \rhoe^0 \d\x \geq 0, \qquad \forall K\in \Tt.
\ee
Note that the energy $\Ee_\Tt$ is not in $C^1(\R^\Tt_+)$ since its gradient blows up on $\p\R_+^\Tt$. However, the functional $\Ee_\Tt$ 
is continuous and strictly convex on $\R^\Tt_+$, hence the scheme~\eqref{eq:LJKO_0} still admits a unique solution $\brho^n$ 
for all $n \geq 1$ thanks to Theorem~\ref{thm:LJKO}, since its proof does not use the differentiability of the energy. 
Thanks to the conservativity of the scheme and definition~\eqref{eq:rhoK0} of $\brho^0$, one has 
\[
\langle \brho^n, \1\rangle_{\Tt} = \langle \brho^0, \1\rangle_{\Tt} = \int_\O \rhoe^0 \d\x>0, \qquad \forall n \geq 1.
\]
Let us show that $\brho^n >\0$ for all $n\geq 1$.
To this end, we proceed as in~\cite[Lemma 8.6]{Santambrogio_OTAM}.
\begin{lem}\label{lem:rho_pos}
Assume that $\rhoe^0$ has positive mass, then the iterated solutions $\left(\brho^n\right)_{n\geq1}$ to scheme~\eqref{eq:LJKO_0} 
satisfy $\brho^n>\0$ for all $n\geq 1$. Moreover, there exists a unique sequence $\left(\bphi^n\right)_{\geq1}$ of discrete Kantorovitch potentials such that 
the following optimality conditions are satisfied for all $K\in\Tt$ and all $n \geq 1$:
\begin{align}
& \phi_K^n +\frac\tau{2m_K} \sum_{\sig =K|L\in \Sig_K} a_\sig \left( (\phi_K^n - \phi_L^n)^+ \right)^2 = \log(\rho_K^n) + V_K, \label{eq:FP_HJ}
\\
& (\rho_K^n-\rho_K^{n-1}) m_K + \tau \sum_{\sig=K|L \in \Sig} a_\sig \rho_\sig^n (\phi_K^n - \phi_L^n) = 0.\label{eq:FP_cons}
\end{align} 
\end{lem}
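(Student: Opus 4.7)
The plan is to first prove strict positivity $\brho^n > \0$ by a perturbation argument inspired by~\cite[Lemma 8.6]{Santambrogio_OTAM}, then derive existence and uniqueness of $\bphi^n$ from Theorem~\ref{thm:existence} and Lemma~\ref{lem:HJ}. Since $\brho^n \in \bbP_\Tt$, the total mass $\langle \brho^n, \1\rangle_\Tt = \int_\O \rhoe^0\d\x > 0$ is preserved. Assume for contradiction that $\Zz := \{K\in\Tt : \rho_K^n = 0\}$ is non-empty; it is then a proper subset of $\Tt$, and by connectedness of the mesh I can choose an internal face $\sig_0 = K_0|L_0$ with $K_0 \in \Zz$ and $\rho_{L_0}^n > 0$. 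The idea is to show that transferring a small amount of mass from $L_0$ to $K_0$ strictly decreases the objective in~\eqref{eq:LJKO_0}, contradicting the minimality of $\brho^n$ established in Theorem~\ref{thm:LJKO}.

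For $\eps > 0$ small, I consider $\brho_\eps \in \bbP_\Tt$ defined by $\rho_{K_0}^\eps = \eps/m_{K_0}$, $\rho_{L_0}^\eps = \rho_{L_0}^n - \eps/m_{L_0}$, and $\rho_K^\eps = \rho_K^n$ otherwise; this is mass-preserving. A direct Taylor expansion, together with~\eqref{eq:FP_dE}, gives
\[
\Ee_\Tt(\brho_\eps) - \Ee_\Tt(\brho^n) = \eps\log(\eps/m_{K_0}) + O(\eps) \quad \text{as } \eps \to 0^+,
\]
the dominant negative term $\eps\log\eps$ coming from the blow-up of the logarithm at the previously-empty cell $K_0$. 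For the dissipation, let $\bF^*$ be an optimal flux in the definition~\eqref{eq:H-1rhoTt} of $\Psi_\Tt(\brho^n; \brho^{n-1}-\brho^n)$, which is finite since $\brho^n$ realizes the minimum in~\eqref{eq:LJKO_0} and the objective is finite at $\brho = \brho^{n-1}$. I route the mass imbalance across the single face $\sig_0$ by setting $\de F_{K_0, \sig_0} = -\eps/m_{\sig_0}$, $\de F_{L_0, \sig_0} = \eps/m_{\sig_0}$, and $\de F_{K\sig} = 0$ elsewhere. Then $\bF^* + \de\bF$ satisfies the conservativity~\eqref{eq:cons_sig} and the balance~\eqref{eq:cons_K} with $\bh = \brho^{n-1} - \brho_\eps$, so it is admissible as a competitor in $\Psi_\Tt(\brho_\eps; \brho^{n-1}-\brho_\eps)$. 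A case-by-case analysis then yields
\[
\Psi_\Tt(\brho_\eps; \brho^{n-1}-\brho_\eps) - \Psi_\Tt(\brho^n; \brho^{n-1}-\brho^n) \leq C\eps
\]
for some constant $C$ independent of $\eps$. Combining the two estimates, the change in the objective of~\eqref{eq:LJKO_0} is $\eps\log(\eps/m_{K_0}) + O(\eps)$, strictly negative for $\eps$ small enough, contradicting the minimality of $\brho^n$. Hence $\Zz = \emptyset$ and $\brho^n > \0$.

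The main obstacle is the dissipation bound, particularly the contribution of the face $\sig_0$. On edges not incident to $K_0$ or $L_0$ the upwind density is unchanged; on edges incident to $L_0$ but distinct from $\sig_0$ it stays bounded below by $\rho_{L_0}^n - \eps/m_{L_0} > 0$; and on edges incident to $K_0$ other than $\sig_0$, finiteness of $\Psi_\Tt(\brho^n; \cdot)$ forces $F_\sig^* = 0$ wherever $K_0$ would be upwind. On $\sig_0$ itself, either $F^*_{\sig_0}$ vanishes and the new cost scales as $(\eps/m_{\sig_0})^2 / \rho_{L_0}^\eps = O(\eps^2)$, or $F^*_{\sig_0}$ is non-zero and must already be directed towards $K_0$ (so that $L_0$ is upwind); in the latter case the perturbation preserves this orientation and the variation is $O(\eps)$. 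Once $\brho^n > \0$ is established, $\Ee_\Tt$ is smooth at $\brho^n$, the conclusion of Theorem~\ref{thm:existence} applies verbatim and yields a pair $(\brho^n, \bphi^n)$ solving~\eqref{LJKOh}, which reads~\eqref{eq:FP_HJ}--\eqref{eq:FP_cons} after substitution of~\eqref{eq:FP_dE}. Finally, \eqref{eq:FP_HJ} is exactly $\bGg(\bphi^n) = \bff^n$ with $f_K^n = \log\rho_K^n + V_K$, a well-defined vector in $\R^\Tt$, and Lemma~\ref{lem:HJ} ensures the uniqueness of $\bphi^n$.
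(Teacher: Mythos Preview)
Your argument is correct and reaches the same conclusion, but the route differs from the paper's in two essential respects. For the competitor, the paper perturbs \emph{globally} via the convex combination $\brho^n_\eps=\eps\ov\brho+(1-\eps)\brho^n$ with $\ov\brho=\ov\rho\,\1$, while you perturb \emph{locally} by pushing mass across a single interface $\sig_0=K_0|L_0$. For the dissipation bound, the paper invokes the convexity of $\brho\mapsto\Psi_\Tt(\brho;\brho^{n-1}-\brho)$ from Lemma~\ref{lem:convex}, which immediately yields
\[
\Psi_\Tt(\brho^n_\eps;\brho^{n-1}-\brho^n_\eps)\leq\eps\,\Psi_\Tt(\ov\brho;\brho^{n-1}-\ov\brho)+(1-\eps)\,\Psi_\Tt(\brho^n;\brho^{n-1}-\brho^n),
\]
whereas you construct an explicit admissible flux $\bF^*+\de\bF$ and bound the cost edge by edge. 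Both lead to the same decisive $\eps\log\eps$ term in the energy that forces $\Zz=\emptyset$. The paper's approach is shorter and structure-free (no mesh connectedness, no existence of a minimizing flux, no case analysis on $\sig_0$); your approach is more elementary in that it bypasses Lemma~\ref{lem:convex}, at the price of some bookkeeping. Note incidentally that in your setting the balance at $K_0$ together with $F^*_{K_0\sig}\leq 0$ already forces $\rho_{K_0}^{n-1}=0$ and $F^*_{K_0\sig}=0$ on \emph{all} faces of $K_0$, so your second subcase on $\sig_0$ is in fact vacuous. The concluding step --- differentiability of $\Ee_\Tt$ at $\brho^n>\0$ and the appeal to Theorem~\ref{thm:existence} and Lemma~\ref{lem:HJ} for existence and uniqueness of $\bphi^n$ --- matches the paper exactly.
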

\begin{proof}
Define $\ov \rho =\frac1{|\O|}\int_\O \rhoe^0\d\x$ and  $\ov\brho = \ov \rho \1 \in \bbP_\Tt$, and by $\brho^n_\eps = \left(\rho_{K,\eps}^n\right)_{K\in\Tt}
= \eps \ov \brho + (1-\eps) \brho^n \in \bbP_\Tt$ for some arbitrary $\eps \in (0,1)$.
Since $\brho^n$ is optimal in~\eqref{eq:LJKO_0}, there holds 
\begin{multline}\label{eq:toto}
\sum_{K\in\Tt} m_K\left[ \rho_K^n \log\rho_K^n - \rho_{K,\eps}^n \log\rho_{K,\eps}^n\right]\leq \sum_{K\in\Tt}m_K \left(\rho_{K,\eps}^n - \rho_K^n\right) V_K \\
+ \Psi_\Tt(\brho_\eps^n;  \brho^{n-1}-\brho^n_{\eps}) - \Psi_\Tt(\brho^n;\brho^{n-1}- \brho^n).
\end{multline}
The convexity of $\brho\mapsto \Psi_\Tt(\brho, \brho^{n-1}-\brho)$ implies that 
\[
\Psi_\Tt(\brho_\eps^n;  \brho^{n-1}-\brho^n_{\eps}) \leq \eps \Psi_\Tt(\ov \brho;\brho^{n-1}- \ov \brho ) + (1-\eps) \Psi_\Tt(\brho^n;  \brho^{n-1}-\brho^n), 
\]
while the boundedness of $V$ provides 
\[
\sum_{K\in\Tt}m_K \left(\rho_{K,\eps}^n - \rho_K^n\right) V_K \leq \eps \|V\|_{L^\infty(\O)} \|\rhoe^0\|_{L^1(\O)}.
\]
Therefore, the right-hand side in~\eqref{eq:toto} can be overestimated by 
\[
\sum_{K\in\Tt} m_K\left[ \rho_K^n \log\rho_K^n - \rho_{K,\eps}^n \log\rho_{K,\eps}^n\right]\\ 
\leq C \eps
\]
for some $C$ depending on $\brho^n, \brho^{n-1}$ and $V$ but not on $\eps$. Setting $\Zz^n = \{K\in \Tt\;| \; \rho_K^n = 0\}$ 
and $\Pp^n =  \{K\in \Tt\;|\; \rho_K^n > 0\} = {(\Zz^n)}^c$, we have 
\[
\sum_{K\in\Zz^n} m_K\left[ \rho_K^n \log\rho_K^n - \rho_{K,\eps}^n \log\rho_{K,\eps}^n\right] = 
\eps \sum_{K\in\Zz^n} m_K \ov \rho  \log \eps \ov \rho, 
\]
and, thanks to the convexity of $\rho\mapsto \rho \log \rho$ and to the monotonicity of $\rho\mapsto \log\rho$, 
\begin{align*}
\sum_{K\in\Pp^n} m_K\left[ \rho_K^n \log\rho_K^n - \rho_{K,\eps}^n \log\rho_{K,\eps}^n\right] \geq & 
\;\eps \sum_{K\in\Pp^n} m_K (\rho_K^n - \ov \rho)(1+ \log(\rho_{K,\eps}^n)) \\
\geq & \;\eps\sum_{K\in\Pp^n} m_K (\rho_K^n - \ov \rho)(1+ \log(\ov \rho)) \geq - C \eps. 
\end{align*}
Then dividing by $\eps$ and letting $\eps$ tend to $0$, we obtain that 
\[
\underset{\eps\to0}{\text{limsup}} \sum_{K\in\Zz^n} m_K \ov \rho  \log \eps \ov \rho \leq C, 
\]
which is only possible if $\Zz^n = \emptyset$, i.e., $\brho^n > \0$. This implies that $\Ee_\Tt$ is differentiable at $\brho^n$, 
hence the optimality conditions~\eqref{LJKOh} hold, which rewrites as~\eqref{eq:FP_HJ}--\eqref{eq:FP_cons} thanks to~\eqref{eq:FP_dE}. 
By the way, the uniqueness of the discrete Kantorovitch potential $\bphi^n$ 
for all $n \geq 1$ is provided by Theorem~\ref{thm:existence}.
\end{proof}

Lemma~\ref{lem:rho_pos} allows to define 
two functions $\rho_{\Tt,\tau}$ and $\phi_{\Tt,\tau}$ by setting
\[
\rho_{\Tt,\tau}(\x,t) = \rho_K^n, \quad \phi_{\Tt,\tau}(\x,t) = \phi_K^n \quad \text{if}\; (\x,t) \in K\times(t^{n-1},t^n].
\]
It follows from the conservativity of the scheme and definition~\eqref{eq:rhoK0} of $\brho^0$ that 
\[
\int_{\O} \rho_{\Tt,\tau}(\x,t^n) \d\x = \langle \brho^n, \1\rangle_{\Tt} = \langle \brho^0, \1\rangle_{\Tt} = \int_\O \rhoe^0 \d\x>0,
\]
so that $\rho_{\Tt,\tau}(\cdot,t)$ belongs to $\bbP(\O)$ for all $t\in (0,T)$. 

The goal of this section is to prove the following theorem.
\begin{thm}\label{thm:conv}
Assume that $\rhoe^0 \geq \rho_\star$ for some $\rho_\star \in (0,+\infty)$ and that $\Ee(\rhoe^0) < +\infty$, 
and let $\left(\Tt_m, \ov\Sig_m, \left(\x_K\right)_{K\in\Tt_m} \right)_{m\geq 1}$ be a sequence of admissible discretizations of $\O$ 
 such that $h_{\Tt_m}$ and $\tau_m$ tend to $0$ while conditions~\eqref{eq:reg_mesh} hold. 
Then up to a subsequence, $\left(\rho_{\Tt_m,\tau_m}\right)_{m\geq 1}$ tends in $L^1(Q_T)$ towards 
a weak solution $\rhoe \in L^\infty((0,T);L^1(\O)) \cap L^2((0,T);W^{1,1}(\O))$ 
of~\eqref{eq:FokkerPlanck} corresponding to the initial data $\rhoe^0$.
\end{thm}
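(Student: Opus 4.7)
The strategy is the classical three-step convergence program for finite-volume schemes: (i) derive discrete a priori estimates, including crucially a uniform positive lower bound on $\brho^n$ inherited from $\rhoe^0 \geq \rho_\star$; (ii) extract space-time compactness so that $\rho_{\Tt_m,\tau_m} \to \rhoe$ in $L^1(Q_T)$ along a subsequence; (iii) pass to the limit in the discrete system~\eqref{eq:FP_HJ}--\eqref{eq:FP_cons} to identify $\rhoe$ as a weak solution of~\eqref{eq:FokkerPlanck}.

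For step (i), Jensen's inequality applied cell by cell to~\eqref{eq:rhoK0}, together with the Lipschitz regularity of $V$, bounds $\Ee_\Tt(\brho^0)$ uniformly in $m$ by $\Ee(\rhoe^0) + C\|V\|_{W^{1,\infty}}\|\rhoe^0\|_{L^1}$, so iterating Theorem~\ref{thm:LJKO} gives both $\sup_n \Ee_\Tt(\brho^n) \leq C$ and $\sum_{n=1}^{N}\tau\,\Psi_\Tt^*(\brho^n;\bphi^n) \leq C$. The delicate point is the uniform positive lower bound on $\brho^n$. I would apply the discrete maximum principle of Lemma~\ref{lem:HJ} to the HJ relation~\eqref{eq:FP_HJ} to obtain
\[
\min_K\bigl(\log\rho_K^n+V_K\bigr) \leq \phi_K^n \leq \max_K\bigl(\log\rho_K^n+V_K\bigr),
\]
then evaluate the conservation law~\eqref{eq:FP_cons} at a cell $K^\star$ realizing $\min_K \rho_K^n$: combining the upwind structure with the sign of $\phi_{K^\star}^n-\phi_L^n$ inherited from the maximum principle (the quadratic HJ term vanishes in directions of negative $\phi$-difference), the outgoing fluxes from $K^\star$ are controlled so as to yield a discrete Gronwall estimate $\min_K\rho_K^n \geq \min_K\rho_K^{n-1}\,e^{-C\tau}$, hence $\rho_K^n \geq c_\star := \rho_\star e^{-CT} > 0$ uniformly in $m,n,K$. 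I expect this to be the main obstacle, since the lower bound is what rescues the degeneracy of the upwind mobility and turns dissipation into $H^1$ control.

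Once $\rho_K^n\geq c_\star$ is available, the dissipation estimate becomes a genuine discrete $L^2(0,T;H^1)$ bound: since $\rho_\sig^n\geq c_\star$,
\[
c_\star\sum_{n=1}^N\tau\sum_{\sig=K|L}a_\sig(\phi_K^n-\phi_L^n)^2 \leq 2\sum_{n=1}^N\tau\,\Psi_\Tt^*(\brho^n;\bphi^n)\leq C.
\]
Inserting this back into~\eqref{eq:FP_HJ} shows that the quadratic correction is $O(\tau)$ in an $\ell^1$-in-time sense, so $\phi_K^n = \log\rho_K^n + V_K$ up to a small residual; combined with the lower bound $c_\star\leq\rho_K^n$ (on which $\log$ is Lipschitz) and the smoothness of $V$, the discrete $H^1$ estimate transfers to $\rho_{\Tt,\tau}$ itself. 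Discrete Rellich--Kolmogorov compactness~\cite{EGH00} handles space compactness; time compactness comes from~\eqref{eq:FP_cons} viewed as a discrete $H^{-1}$ control of the time increment, which together with the $H^1$ estimate yields a discrete Aubin--Lions extraction $\rho_{\Tt_m,\tau_m}\to\rhoe$ in $L^1(Q_T)$ and a.e., with $\rhoe\in L^\infty(0,T;L^1(\O))\cap L^2(0,T;W^{1,1}(\O))$. The upwind values $\rho_\sig^n$ converge to the same limit thanks to the lower bound.

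For step (iii), given $\varphi\in C^\infty_c(\ov\O\times[0,T))$ with $\varphi_K^n:=\varphi(\x_K,t^n)$, multiply~\eqref{eq:FP_cons} by $\varphi_K^{n-1}$ and sum over $K\in\Tt$ and $n=1,\dots,N$. The discrete time derivative contribution, after a summation by parts in time, converges to $-\int_0^T\!\!\int_\O\rhoe\,\p_t\varphi\,\d\x\,\d t - \int_\O\rhoe^0\varphi(\cdot,0)\,\d\x$ by the $L^1$ convergence of $\rho_{\Tt,\tau}$. The flux contribution
\[
\sum_{n=1}^N\tau\sum_{\sig=K|L}a_\sig\rho_\sig^n(\phi_K^n-\phi_L^n)(\varphi_K^{n-1}-\varphi_L^{n-1})
\]
is treated by substituting $\phi_K^n-\phi_L^n=(\log\rho_K^n-\log\rho_L^n)+(V_K-V_L)$ modulo an $O(\tau)$ remainder controlled by the summed dissipation, and by invoking the standard TPFA consistency on admissible meshes satisfying~\eqref{eq:reg_mesh}~\cite{EGH00}; its limit is $\int_0^T\!\!\int_\O(\grad\rhoe+\rhoe\grad V)\cdot\grad\varphi\,\d\x\,\d t$, closing the weak formulation. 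Apart from the lower-bound step, each ingredient follows standard TPFA convergence machinery once adapted to the variational/upwind structure established in Section~\ref{sec:FV}.
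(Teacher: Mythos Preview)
Your three-step skeleton matches the paper's, but two of the steps contain genuine gaps.

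\textbf{Lower bound.} You propose to evaluate~\eqref{eq:FP_cons} at $K^\star = \operatorname{argmin}_K \rho_K^n$ and read off the sign of $\phi_{K^\star}^n-\phi_L^n$ ``from the maximum principle''. That sign information is not available: the maximum principle only gives $\min_K(\log\rho_K^n+V_K)\le\phi_K^n\le\max_K(\log\rho_K^n+V_K)$, and minimality of $\rho_{K^\star}^n$ says nothing about $\phi_{K^\star}^n-\phi_L^n$ once $V$ is nonconstant and the quadratic HJ term is present. The paper instead picks $K_\star=\operatorname{argmin}_K\phi_K^n$. At that cell all $(\phi_{K_\star}^n-\phi_L^n)^+$ vanish, so~\eqref{eq:FP_HJ} collapses to $\phi_{K_\star}^n=\log\rho_{K_\star}^n+V_{K_\star}$, and the upwind fluxes in~\eqref{eq:FP_cons} are all incoming, giving $\rho_{K_\star}^n\ge\rho_{K_\star}^{n-1}$. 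This yields the \emph{monotone} bound $\min_K(\log\rho_K^n+V_K)\ge\min_K(\log\rho_K^{n-1}+V_K)$ (Lemma~\ref{lem:inf_phi}), hence $\rho_K^n\ge\alpha:=\rho_\star e^{-2\|V\|_\infty}$ with no Gronwall loss.

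\textbf{Compactness.} Your claim that ``the discrete $H^1$ estimate transfers to $\rho_{\Tt,\tau}$ itself'' because $\log$ is Lipschitz on $[c_\star,\infty)$ is the more serious problem: there is no $L^\infty$ upper bound on $\rho_K^n$, so you cannot convert discrete $H^1$ control of $\log\rho$ (or of $\phi$) into discrete $H^1$ control of $\rho$. The paper never attempts this. Instead it (a) gets equi-integrability of $\rho_{\Tt,\tau}$ from the entropy bound and separately weak $L^p$ compactness of $\log\rho_{\Tt,\tau}$ from the lower bound; (b) proves uniform space-translate compactness of $\log\rho_{\Tt,\tau}$ in $L^1$ from~\eqref{eq:FP_HJ} (the $\phi$-part is BV via~\eqref{eq:L2H1_phi_2}, the quadratic residual is $O(\tau)$ in $L^1$); (c) combines this with the time estimate of Lemma~\ref{lem:drho_dt} through the nonlinear Aubin--Simon lemma of~\cite{ACM17} to pass to the limit in the \emph{product} $\rho_{\Tt,\tau}\log\rho_{\Tt,\tau}$; (d) uses Minty's trick to identify the weak limit $\ell$ of $\log\rho_{\Tt,\tau}$ as $\log\rhoe$, and only then recovers strong $L^1$ convergence of $\rho_{\Tt,\tau}$ via Vitali. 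The $L^2(0,T;W^{1,1})$ regularity of $\rhoe$ is obtained \emph{a posteriori} in Lemma~\ref{prop:flux} from $\sqrt{\rhoe}\in L^2(0,T;H^1)$, not from a discrete gradient bound on $\rho$.

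\textbf{Identification.} Your substitution $\phi_K^n-\phi_L^n=(\log\rho_K^n-\log\rho_L^n)+(V_K-V_L)+O(\tau)$ leads to flux terms like $\sum a_\sig\rho_\sig^n(\log\rho_K^n-\log\rho_L^n)(\varphi_K-\varphi_L)$, whose limit again requires a weak--strong product argument you have not supplied. The paper avoids this by working with the inflated gradient $\bG_{\Sig,\tau}$ of $\phi_{\Tt,\tau}$: it is bounded in $L^2(Q_T)^d$ by~\eqref{eq:L2H1_phi_2}, its weak limit is $\grad(\log\rhoe+V)$ by consistency of inflated gradients~\cite{CHLP03,EG03}, and the flux $\bF_{\Sig,\tau}=-\rho_{\Sig,\tau}\bG_{\Sig,\tau}$ converges as a weak--strong product once $\rho_{\Sig,\tau}\to\rhoe$ strongly (Lemma~\ref{lem:conv_rho_Sig}).
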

The proof is based on compactness arguments. At first in Section~\ref{ssec:apriori}, 
we derive some a priori estimates on the discrete solution. These estimates will be used to obtain 
some compactness on $\rho_{\Tt_m,\tau_m}$ and $\phi_{\Tt_m,\tau_m}$ in Section~\ref{ssec:compact}. 
Finally, we identify the limit value as a weak solution in Section~\ref{ssec:identify}.

\subsection{Some a priori estimates}\label{ssec:apriori}

First, let us show that if the continuous initial energy $\Ee(\rhoe^0)$ is bounded, then so does its discrete counterpart $\Ee_\Tt(\brho^0)$. 
\begin{lem}\label{lem:Ee_Tt0}
Given $\rhoe^0 \in \bbP(\O)$ such that $\Ee(\rhoe^0)<+\infty$, and let $\brho^0$ be defined by~\eqref{eq:rhoK0}, then 
there exists $\ctel{cte:L2H1_phi_1}$ depending only on $\O$, $V$ and $\rhoe^0$ (but not on $\Tt$) such that 
$\Ee_\Tt(\brho^n) \leq \cter{cte:L2H1_phi_1}$ for all $n \geq 0$.
\end{lem}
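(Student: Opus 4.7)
The plan is to reduce everything to a bound on the initial discrete energy $\Ee_\Tt(\brho^0)$ by using the energy dissipation already established in Theorem~\ref{thm:LJKO}, and then to control $\Ee_\Tt(\brho^0)$ uniformly in $\Tt$ by comparing term by term with the continuous energy $\Ee(\rhoe^0)$.

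First I would iterate the one-step inequality~\eqref{eq:NRJ_0} (which holds for Fokker--Planck even though $\Ee_\Tt \notin C^1(\R_+^\Tt)$, since \eqref{eq:NRJ_0} is derived by choosing $\brho = \brho^{n-1}$ as a competitor in the minimization, an argument that never uses differentiability of $\Ee_\Tt$). This gives $\Ee_\Tt(\brho^n) \leq \Ee_\Tt(\brho^0)$ for every $n\geq 1$, so the whole lemma reduces to finding a mesh-independent bound on $\Ee_\Tt(\brho^0)$.

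Next I expand $\Ee_\Tt(\brho^0) = \sum_{K\in\Tt} m_K \bigl[\rho_K^0 \log\rho_K^0 - \rho_K^0\bigr] + \sum_{K\in\Tt} m_K \rho_K^0 V_K + \sum_{K\in\Tt} m_K e^{-V_K}$ and bound each piece separately. For the first sum, the key tool is Jensen's inequality applied to the convex function $f(s) = s\log s - s$: since $\rho_K^0 = \tfrac{1}{m_K}\int_K \rhoe^0 \d\x$, one has $m_K f(\rho_K^0) \leq \int_K f(\rhoe^0)\d\x$, so summing over $K$ yields $\sum_K m_K f(\rho_K^0) \leq \int_\O f(\rhoe^0)\d\x$, which is finite because $\Ee(\rhoe^0)<+\infty$ together with $V \in L^\infty(\O)$ forces $\rhoe^0\log\rhoe^0 \in L^1(\O)$. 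For the potential term, I use $|V_K|\leq \|V\|_{L^\infty(\O)}$ (finite since $V\in C^2(\ov\O)$) to obtain $\sum_K m_K \rho_K^0 V_K \leq \|V\|_{L^\infty(\O)} \|\rhoe^0\|_{L^1(\O)}$. The last sum is bounded trivially by $|\O|\, e^{\|V\|_{L^\infty(\O)}}$.

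I do not expect any real obstacle here; the only subtle point is checking that the integrability conditions implicitly required to apply Jensen (namely $\rhoe^0\log\rhoe^0 \in L^1$) are already guaranteed by the assumption $\Ee(\rhoe^0)<+\infty$ combined with $V\in L^\infty(\O)$ and $\rhoe^0\in L^1(\O)$. Collecting the three bounds produces a constant $\cter{cte:L2H1_phi_1}$ depending only on $\O$, $V$ and $\rhoe^0$, and then the iterated dissipation inequality propagates the bound to every $n\geq 0$.
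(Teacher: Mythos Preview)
Your proof is correct and follows the same approach as the paper: reduce to bounding $\Ee_\Tt(\brho^0)$ via the energy-decay inequality~\eqref{eq:NRJ_0}, split into the three terms (entropy, potential, normalizing), and use Jensen for the entropy part. The only minor difference is that the paper bounds the two potential-related sums by comparing them with the continuous integrals $\int_\O \rhoe^0 V$ and $\int_\O e^{-V}$ via the mean value theorem and Lipschitz continuity of $V$ (yielding the sharper $\Ee_\Tt(\brho^0)\leq \Ee(\rhoe^0)+C$), whereas you use the simpler crude $L^\infty$ bounds on $V$ --- both are perfectly sufficient for the lemma as stated.
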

\begin{proof}
It follows from~\eqref{eq:NRJ_0} that $\Ee_\Tt(\brho^n) \leq \Ee_\Tt(\brho^0)$ for all $n\geq 1$. 
Rewriting $\Ee_\Tt(\brho^0)$ as 
\be\label{eq:T123}
\Ee_\Tt(\brho^0) = T_1 + T_2 + T_3  
\ee
with 
\[
T_1  =  \sum_{K\in\Tt} m_K [\rho_K^0 \log \rho_K^0 - \rho_K^0] , \quad
T_2  =  \sum_{K\in\Tt} m_K \rho_K^0 V_K, 
\quad \text{and}\quad T_3= \sum_{K\in\Tt} m_K e^{-V_K}, 
\]
we deduce from the definition~\eqref{eq:rhoK0} of $\brho^0$ and Jensen's inequality that 
\be\label{eq:T1}
T_1 \leq \int_\O [\rhoe^0 \log\rhoe^0 - \rhoe^0] \d\x. 
\ee
Since $V$ is continuous, there exists $\wt \x_K \in K$ such that $\int_K e^{-V}\d\x = m_K e^{-V(\wt \x_K)}$. 
Therefore, 
\be\label{eq:T3}
T_3 = \int_\O e^{-V} \d\x + \sum_{K\in\Tt} m_K [e^{-V(\x_K)} - e^{-V(\wt \x_K)}] \leq  
\int_\O e^{-V} \d\x + e^{\|V^-\|_\infty} \|\grad V\|_\infty {\rm diam}(\O). 
\ee
Similarly, it follows from the mean value theorem that there exists $\check \x_K \in K$ such that 
$
m_K V(\check \x_K) \rho_K^0 = \int_K \rhoe^0 V \d\x. 
$
Hence, 
\be\label{eq:T2}
T_2 = \int_\O \rhoe^0 V \d\x + \sum_{K\in\Tt}m_K \rho_K^0 [V(\x_K)-V(\check \x_K)] \leq \int_\O \rhoe^0 V \d\x
+ \|\grad V\|_\infty {\rm diam}(\O) \int_\O \rhoe^0 \d\x.
\ee
Combining~\eqref{eq:T1}--\eqref{eq:T2} in~\eqref{eq:T123} shows that $\Ee_\Tt(\brho^0) \leq \Ee(\rhoe^0) + C$ 
for some $C$ depending only on $V$, $\O$ and $\rhoe^0$. 
\end{proof}

Our next lemma shows that if $\rhoe^0$ is bounded away from $0$, then so does $\rho_{\Tt,\tau}$.
\begin{lem}\label{lem:inf_phi}
Using the convention $\log(0) = - \infty$, one has 
\[
\min_{K\in\Tt} \left[\log(\rho_K^n) + V_K\right] \geq \min_{K\in\Tt} \left[\log(\rho_K^{n-1}) + V_K\right], \qquad \forall n \geq 1. 
\]
In particular, if $\rhoe^0\geq \rho_\star$ for some $\rho_\star \in (0,+\infty)$, then there exists $\alpha>0$ depending 
only on $V$ and $\rho_\star$ (but not on $\Tt,\tau$ and $n$) such that $\brho^n \geq \alpha \1$ for all $n \geq 1$.
\end{lem}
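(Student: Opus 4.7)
The plan is to establish the monotonicity of $n \mapsto \min_K [\log(\rho_K^n) + V_K]$ via a discrete maximum-principle argument applied to the coupled system~\eqref{eq:FP_HJ}--\eqref{eq:FP_cons}. The key observation is that, although we want to control $\log(\rho_K^n) + V_K$, the right cell to start from is the one minimizing the Kantorovich potential $\bphi^n$, not the chemical potential itself.

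More precisely, denote $m^{n-1} = \min_{K\in\Tt}[\log(\rho_K^{n-1}) + V_K]$ and pick $K^\star\in\Tt$ realizing $\phi^n_{K^\star} = \min_{K\in\Tt} \phi_K^n$. Then $\phi_{K^\star}^n \leq \phi_L^n$ for every neighbor $L\in\Nn_{K^\star}$, so that $(\phi_{K^\star}^n - \phi_L^n)^+ = 0$ for every face $\sig \in \Sig_{K^\star}$. Substituting into the Hamilton--Jacobi relation~\eqref{eq:FP_HJ} at $K^\star$ collapses the quadratic sum and yields
\[
\phi_{K^\star}^n = \log(\rho_{K^\star}^n) + V_{K^\star}.
\]

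Next I would exploit the continuity equation~\eqref{eq:FP_cons} at the same cell $K^\star$. Since $\phi_{K^\star}^n \leq \phi_L^n$ for every neighbor, every term $a_\sig \rho_\sig^n (\phi_{K^\star}^n - \phi_L^n)$ is nonpositive (regardless of the upwind choice, which is actually immaterial here because the factor vanishes at ties). Therefore
\[
(\rho_{K^\star}^n - \rho_{K^\star}^{n-1}) m_{K^\star} = -\tau \sum_{\sig = K^\star|L \in \Sig_{K^\star}} a_\sig \rho_\sig^n (\phi_{K^\star}^n - \phi_L^n) \geq 0,
\]
so $\rho_{K^\star}^n \geq \rho_{K^\star}^{n-1}$, and consequently $\phi_{K^\star}^n = \log(\rho_{K^\star}^n) + V_{K^\star} \geq \log(\rho_{K^\star}^{n-1}) + V_{K^\star} \geq m^{n-1}$. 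Finally, for an arbitrary cell $K \in \Tt$, the nonnegativity of the quadratic term in~\eqref{eq:FP_HJ} combined with the minimality of $\phi_{K^\star}^n$ gives
\[
\log(\rho_K^n) + V_K \geq \phi_K^n \geq \phi_{K^\star}^n \geq m^{n-1},
\]
which is the desired inequality.

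For the second part, from the definition~\eqref{eq:rhoK0} and the hypothesis $\rhoe^0 \geq \rho_\star$, one immediately has $\rho_K^0 \geq \rho_\star$ for all $K$, so that $m^0 \geq \log(\rho_\star) - \|V\|_{L^\infty(\O)}$. Iterating the monotonicity just established, $m^n \geq m^0$ for every $n \geq 1$, and then for each $K\in\Tt$,
\[
\rho_K^n \geq e^{m^n - V_K} \geq \rho_\star \, e^{-2\|V\|_{L^\infty(\O)}} =: \alpha,
\]
which depends only on $\rho_\star$ and $V$. I do not anticipate any serious obstacle: the only delicate point is choosing to extremize $\bphi^n$ rather than $\log\brho^n + \bV$, which is needed so that the quadratic $((\phi_{K^\star}^n - \phi_L^n)^+)^2$ in~\eqref{eq:FP_HJ} vanishes and the upwind mobility in~\eqref{eq:FP_cons} has the favorable sign.
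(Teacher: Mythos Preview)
Your proof is correct and follows essentially the same approach as the paper: pick the cell minimizing $\bphi^n$, use~\eqref{eq:FP_HJ} to identify $\phi_{K^\star}^n = \log(\rho_{K^\star}^n)+V_{K^\star}$ there, use~\eqref{eq:FP_cons} to get $\rho_{K^\star}^n\geq\rho_{K^\star}^{n-1}$, and chain the inequalities. The only cosmetic difference is that the paper writes $\alpha=\rho_\star e^{-\|V^+\|_\infty-\|V^-\|_\infty}$ whereas you take $\alpha=\rho_\star e^{-2\|V\|_\infty}$, which is a slightly smaller but equally valid constant.
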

\begin{proof}
It follows directly from~\eqref{eq:FP_HJ} that 
$
\log(\rho_K^n) + V_K \geq \phi_K^n
$
for all $K\in\Tt$.
Let $K_\star \in \Tt$ be such that $\phi_{K_\star}^n \leq \phi_K^n$ for all $K\in\Tt$, then the conservation equation~\eqref{eq:FP_cons} 
ensures that $\rho_{K_\star}^n \geq \rho_{K_\star}^{n-1}$. On the other hand, since 
\[\sum_{\sig =K_\star|L\in \Sig_{K_\star}} a_\sig \left( (\phi_{K_\star}^n - \phi_L^n)^+ \right)^2 =0,\]
the discrete HJ equation \eqref{eq:FP_HJ} provides that 
\[
\phi_{K^\star}^n =  \log(\rho_{K_\star}^n) + V_{K_\star} = \min_{K\in\Tt} \left[\log(\rho_K^n) + V_K\right]\geq \log(\rho_{K_\star}^{n-1}) + V_{K_\star} \geq \min_{K\in\Tt} \left[\log(\rho_K^{n-1}) + V_K\right].
\]
Assume now that $\rhoe^0 \geq \rho_\star$, then for all $K\in\Tt$ and all $n\geq 0$, 
\[
\log(\rho_K^n) \geq \min_{L\in\Tt}[ \log(\rho_L^0) + V_L]- V_K \geq \min_{L\in\Tt}\log(\rho_L^0) - 2 \|V\|_\infty \geq \log(\rho_\star) - \|V^+\|_\infty - \|V^-\|_\infty.
\]
Therefore, we obtain the desired inequality with $\alpha = \rho_\star e^{- \|V^+\|_\infty - \|V^-\|_\infty}$.
\end{proof}

Our third lemma deals with some estimates on the discrete gradient of the discrete Kantorovitch potentials $\left(\bphi^n\right)_n$.
\begin{lem}\label{lem:L2H1_phi}
Let $(\brho^n, \bphi^n)$ be the iterated solution to~\eqref{LJKOh}, then
\be\label{eq:L2H1_phi_1}
 \sum_{n = 1}^N \tau \sum_{\sig=K|L\in\Sig} a_\sig \rho_\sig^n (\phi_K^n - \phi_L^n)^2 \leq  \cter{cte:L2H1_phi_1}. 
\ee
Moreover, if $\rhoe^0 \geq \rho_\star \in (0,+\infty)$, then there exists $\ctel{cte:L2H1_phi_2}$ depending on $\O$, $V$ and $\rhoe^0)$ 
such that 
\be\label{eq:L2H1_phi_2}
\sum_{n = 1}^N \tau \sum_{\sig=K|L\in\Sig} a_\sig  (\phi_K^n - \phi_L^n)^2 \leq \cter{cte:L2H1_phi_2}. 
\ee
\end{lem}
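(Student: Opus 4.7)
\medskip
\noindent\textbf{Proof plan for Lemma~\ref{lem:L2H1_phi}.}
The strategy is to identify the quantity to be bounded as (twice) the dual dissipation potential $\Psi_\Tt^*(\brho^n;\bphi^n)$, and then to exploit the energy/energy-dissipation inequality~\eqref{eq:NRJ_0} by telescoping and using that the discrete Boltzmann energy is non-negative and that $\Ee_\Tt(\brho^0)$ is controlled by Lemma~\ref{lem:Ee_Tt0}.

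First, I would rewrite the conservation law~\eqref{eq:FP_cons} as $(\rho_K^{n-1}-\rho_K^n)m_K=\tau\sum_{\sig=K|L\in\Sig_K}a_\sig\rho_\sig^n(\phi_K^n-\phi_L^n)$. Comparing with~\eqref{eq:Kanto_disc}, this identifies $\tau\bphi^n$ as a discrete Kantorovitch potential associated with $\bh=\brho^{n-1}-\brho^n$ for the mobility $\brho^n$. Lemma~\ref{lem:Legendre} then yields
\[
\frac1\tau\Psi_\Tt(\brho^n;\brho^{n-1}-\brho^n)
=\tau\,\Psi_\Tt^*(\brho^n;\bphi^n)
=\frac\tau2\sum_{\sig=K|L\in\Sig}a_\sig\rho_\sig^n(\phi_K^n-\phi_L^n)^2.
\]
Plugging this into the energy–dissipation estimate~\eqref{eq:NRJ_0} gives
\[
\frac\tau2\sum_{\sig=K|L\in\Sig}a_\sig\rho_\sig^n(\phi_K^n-\phi_L^n)^2\le\Ee_\Tt(\brho^{n-1})-\Ee_\Tt(\brho^n),
\]
and summing from $n=1$ to $N$ telescopes the right-hand side into $\Ee_\Tt(\brho^0)-\Ee_\Tt(\brho^N)$.

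To close the bound I need $\Ee_\Tt(\brho^0)\le\cter{cte:L2H1_phi_1}$, which is exactly Lemma~\ref{lem:Ee_Tt0}, together with $\Ee_\Tt(\brho^N)\ge0$. The lower bound is just the pointwise Klein-type inequality $x\log(x/y)-x+y\ge0$ for $x,y\ge0$ (applied cellwise with $x=\rho_K^N$ and $y=e^{-V_K}$), so no extra work is needed there. This proves~\eqref{eq:L2H1_phi_1}.

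For the second estimate, I would remove the mobility weight $\rho_\sig^n$ by the uniform positivity provided by Lemma~\ref{lem:inf_phi}: under $\rhoe^0\ge\rho_\star>0$ one has $\rho_K^n\ge\alpha>0$ for every $K$ and $n$, hence $\rho_\sig^n\ge\alpha$ for every internal face. Dividing~\eqref{eq:L2H1_phi_1} by $\alpha$ yields~\eqref{eq:L2H1_phi_2} with $\cter{cte:L2H1_phi_2}=\cter{cte:L2H1_phi_1}/\alpha$.

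The whole argument is essentially bookkeeping once the duality identity of Lemma~\ref{lem:Legendre} is invoked; there is no real obstacle, only the mild point of checking that $\Ee_\Tt$ is bounded below (which is immediate from the relative entropy inequality) and that the positivity $\brho^n>\0$ of Lemma~\ref{lem:rho_pos} is quantitative via Lemma~\ref{lem:inf_phi} in order to transfer~\eqref{eq:L2H1_phi_1} into the unweighted bound~\eqref{eq:L2H1_phi_2}.
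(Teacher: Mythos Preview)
Your proposal is correct and follows essentially the same route as the paper: sum the energy/energy-dissipation inequality~\eqref{eq:NRJ_0}, identify the dissipation term with the weighted Dirichlet sum via Lemma~\ref{lem:Legendre}, bound the telescoped right-hand side by Lemma~\ref{lem:Ee_Tt0} and the nonnegativity of $\Ee_\Tt$, then strip the weight using Lemma~\ref{lem:inf_phi}. Your write-up is in fact slightly more careful than the paper's about the factor of $2$ and about why $\Ee_\Tt(\brho^N)\ge0$.
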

\begin{proof}
Since $\Ee_\Tt(\brho)\geq 0$ for all $\brho \in \bbP_\Tt$, summing~\eqref{eq:NRJ_0} over $n\in\{1,\dots, N\}$ yields 
\[
\sum_{n = 1}^N \frac1\tau \Psi_\Tt(\brho^n;  \brho^{n-1}-\brho^n) \leq \Ee_\Tt(\brho^0).
\]
Thanks to~\eqref{eq:=Dissip}, the left-hand side rewrites 
\[
\sum_{n = 1}^N \frac1\tau \Psi_\Tt(\brho^n; \brho^{n-1} -\brho^n) = \sum_{n = 1}^N \tau \sum_{\sig=K|L\in\Sig} a_\sig \rho_\sig^n (\phi_K^n - \phi_L^n)^2, 
\]
so that it only remains to use Lemma~\ref{lem:Ee_Tt0} to recover~\eqref{eq:L2H1_phi_1}.

Finally, if $\rhoe^0$ is bounded from below by some $\rho_\star>0$, then Lemma~\ref{lem:inf_phi} shows that $\rho_K^n \geq \alpha$ 
for some $\alpha$ depending only on $\rho_\star$ and $V$. Therefore, since $\rho_\sig^n$ is either equal to $\rho_K^n$ or to $\rho_L^n$ 
for $\sig = K|L \in \Sig$, then~\eqref{eq:L2H1_phi_2} holds with $\cter{cte:L2H1_phi_2} = \frac{\cter{cte:L2H1_phi_1}}\alpha$.
\end{proof}

The discrete solution $\rho_{\Tt,\tau}$ is piecewise constant on the cells. To study the convergence of the scheme, we also need a
second reconstruction $\rho_{\Sig,\tau}$  of the density corresponding to the edge mobilities. It is defined by 
\be\label{eq:rho_Sig_def}
\rho_{\Sig,\tau}(\x,t) = \begin{cases}
\rho_\sig^n & \text{if}\; (\x,t) \in \Delta_\sig \times (t^{n-1},t^n], \quad \sig \in \Sig, \\
\rho_K^n & \text{if}\; (\x,t) \in K \setminus \left( \bigcup_{\sig\in\Sig_K} \Delta_\sig \right)  \times (t^{n-1},t^n], \quad  K \in \Tt. 
\end{cases}
\ee
\begin{lem}\label{lem:rho_Sig}
There exists $\ctel{cte:rho_Sig}$ depending only on $\zeta$ and $\rhoe^0$ such that 
\be\label{eq:rho_Sig}
\int_\O \rho_{\Sig,\tau}(\x,t) \d\x \leq   \cter{cte:rho_Sig}, \qquad \forall t > 0. 
\ee
Moreover, there exists $\ctel{cte:H_Sig}$ depending only on $\zeta, V$ and $\rhoe^0$ such that 
\be\label{eq:H_Sig}
\int_\O \rho_{\Sig,\tau}(\x,t) \log \rho_{\Sig,\tau}(\x,t) \d\x \leq   \cter{cte:H_Sig}, \qquad \forall t>0.
\ee
\end{lem}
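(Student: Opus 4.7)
The plan is to compare the edge-based reconstruction $\rho_{\Sig,\tau}$ to the cell-based one $\rho_{\Tt,\tau}$ by exploiting the fact that, by definition \eqref{eq:upwind}, $\rho_\sig^n\in\{\rho_K^n,\rho_L^n\}$ for every $\sig=K|L$, together with the mesh regularity assumption \eqref{eq:reg_3}. Fixing $t\in(t^{n-1},t^n]$, I would start from the decomposition
\[
\int_\O \rho_{\Sig,\tau}(\x,t)\,\d\x = \sum_{K\in\Tt} \rho_K^n m_K + \sum_{K\in\Tt}\sum_{\substack{\sig\in\Sig_K\\\sig=K|L}} \bigl(\rho_\sig^n - \rho_K^n\bigr)\, m_{\Delta_{K,\sig}},
\]
where $m_{\Delta_{K,\sig}}$ denotes the measure of the half-diamond on the $K$ side of $\sig$, so that $m_{\Delta_\sig}=m_{\Delta_{K,\sig}}+m_{\Delta_{L,\sig}}$.

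For \eqref{eq:rho_Sig}, I would bound $\rho_\sig^n - \rho_K^n \leq \rho_L^n$ (as either $\rho_\sig^n=\rho_K^n$ or $\rho_\sig^n=\rho_L^n$), then swap the order of summation to gather, for each cell $L$, the contributions of its incident half-diamonds, each bounded by $m_{\Delta_\sig}$. Applying \eqref{eq:reg_3} yields
\[
\sum_{K\in\Tt}\sum_{\substack{\sig=K|L\in\Sig_K}} \rho_L^n\, m_{\Delta_{K,\sig}} \;\leq\; \sum_{L\in\Tt} \rho_L^n \sum_{\sig\in\Sig_L} m_{\Delta_\sig} \;\leq\; \zeta \sum_{L\in\Tt} \rho_L^n\, m_L,
\]
and mass conservation $\sum_K \rho_K^n m_K = \|\rhoe^0\|_{L^1(\O)}$ (implied by $\brho^n\in\bbP_\Tt$) closes the bound with $\cter{cte:rho_Sig} = (1+\zeta)\|\rhoe^0\|_{L^1(\O)}$.

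The bound \eqref{eq:H_Sig} is analogous but must accommodate the sign of $h(\rho):=\rho\log\rho$; this is the step I expect to be slightly delicate. The key observation is that $h(\rho)\geq -1/e$, so $h(\rho)+1/e\geq 0$. Hence, when $\rho_\sig^n\in\{\rho_K^n,\rho_L^n\}$,
\[
h(\rho_\sig^n) + 1/e \;\leq\; \bigl(h(\rho_K^n)+1/e\bigr) + \bigl(h(\rho_L^n)+1/e\bigr),
\]
which provides $h(\rho_\sig^n)-h(\rho_K^n)\leq h(\rho_L^n)+1/e$, a bound on a nonnegative quantity. The same reorganization as above together with \eqref{eq:reg_3} then yields
\[
\int_\O \rho_{\Sig,\tau}\log\rho_{\Sig,\tau}\,\d\x \;\leq\; (1+\zeta)\sum_{K\in\Tt} m_K\,\rho_K^n\log\rho_K^n + \zeta\,|\O|/e.
\]
The first term is controlled by the energy: from the definition of $\Ee_\Tt$,
\[
\sum_{K\in\Tt} m_K\,\rho_K^n\log\rho_K^n \;=\; \Ee_\Tt(\brho^n) - \sum_{K\in\Tt} m_K\bigl(\rho_K^n V_K - \rho_K^n + e^{-V_K}\bigr),
\]
and Lemma~\ref{lem:Ee_Tt0} together with $\|\brho^n\|_{L^1}=\|\rhoe^0\|_{L^1}$ and $V\in L^\infty(\O)$ gives a uniform bound. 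This produces $\cter{cte:H_Sig}$ depending only on $\zeta$, $V$ and $\rhoe^0$.
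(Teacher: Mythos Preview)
Your proof is correct and follows the same strategy as the paper: bound $\rho_\sig^n$ (resp.\ $\rho_\sig^n\log\rho_\sig^n$) by the sum of the two neighbouring cell values, invoke the mesh regularity~\eqref{eq:reg_3}, and close with mass conservation (resp.\ the energy bound of Lemma~\ref{lem:Ee_Tt0}). Your half-diamond decomposition is a cosmetic variant of the paper's direct splitting into diamonds and their complement, and your $1/e$-shift to cope with the sign of $\rho\log\rho$ is in fact more careful than the paper, which handles this step with the phrase ``Reproducing the above calculations''.
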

\begin{proof}
Since $t\mapsto  \rho_{\Sig,\tau}(\cdot,t)$ is piecewise constant, it  suffices to check that the above properties at each $t^n$, $1\leq n \leq N$. 
In view of the definition of $\rho_{\Sig,\tau}$, one has 
\[
\int_\O \rho_{\Sig,\tau}(\x,t^n) \d\x \leq  \sum_{K\in\Tt} \sum_{\sig\in\Sig_K\cap\Sig_{\rm ext}} \rho_K^n m_K +
\sum_{\sig \in \Sig} \rho_\sig^n m_{\Delta_\sig}. 
\]
The first term can easily be overestimated by $\int_\O \rho_{\Tt,\tau}(\x,t^n) \d\x = \int_\O \rhoe^0 \d\x$. 
Since $\rho_\sig^n \leq \rho_K^n + \rho_L^n$, the second term in the above expression can be overestimated by 
\[
\sum_{\sig \in \Sig} \rho_\sig^n m_{\Delta_\sig} \leq \sum_{K\in\Tt}\rho_K^n \left( \sum_{\sig \in \Sig_K} m_{\Delta_\sig} \right).
\]
Using the regularity property of the mesh~\eqref{eq:reg_3},  
we obtain that 
\[
\sum_{\sig \in \Sig} \rho_\sig^n m_{\Delta_\sig} \leq \zeta \int_\O \rhoe^0 \d\x, 
\]
so that \eqref{eq:rho_Sig} holds with $\cter{cte:rho_Sig}= (1+\zeta)\int_\O \rhoe^0\d\x$.

Reproducing the above calculations, one gets that 
\begin{align*}
\int_\O \rho_{\Sig,\tau}(\x,t) \log \rho_{\Sig,\tau}(\x,t) \d\x \leq&\;(1+\zeta) \int_\O \rho_{\Tt,\tau}(\x,t) \log \rho_{\Tt,\tau}(\x,t) \d\x\\
= &\;(1+\zeta)\left(\Ee_\Tt(\brho^n) + \sum_{K\in\Tt} m_K [ \rho_K^n(1-V_K) - e^{-V_K}] \right).  
\end{align*}
Since $\Ee_\Tt(\brho^n)\leq \Ee_\Tt(\brho^0) \leq \cter{cte:L2H1_phi_1}$ and since $V$ is uniformly bounded, we obtain
that \eqref{eq:H_Sig} holds with $\cter{cte:H_Sig} = (1+\zeta) \left(\cter{cte:L2H1_phi_1} + \|(1-V)^+\|_\infty\right)$.
\end{proof}

The last lemma of this section can be thought as a discrete $\left(L^\infty((0,T);W^{1,\infty}(\O))\right)'$ estimate on $\p_t \rho_{\Tt,\tau}$.
This estimate will be used to apply a discrete nonlinear Aubin-Simon lemma~\cite{ACM17} in the next section.

\begin{lem}\label{lem:drho_dt}
Let $\varphi \in C^\infty_c(Q_T)$, then define $\varphi_K^n = \frac1{m_K}\int_K \varphi(\x,t^n)\d\x$ for all $K\in\Tt$.
There exists $\ctel{cte:drho_dt}$ depending only on $\zeta, T,\rhoe^0, d$,  such that 
\[
\sum_{n=1}^N \sum_{K\in\Tt} m_K (\rho_K^n - \rho_K^{n-1}) \varphi_K \leq \cter{cte:drho_dt} \|\grad \varphi \|_{L^\infty(Q_T)}.
\]
\end{lem}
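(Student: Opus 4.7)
The plan is to use the discrete conservation equation \eqref{eq:FP_cons} to turn the time-difference sum into a sum of edge fluxes, and then to control the resulting quantity by Cauchy--Schwarz using the energy-dissipation estimate of Lemma~\ref{lem:L2H1_phi}.

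First, I would multiply \eqref{eq:FP_cons} by $\varphi_K^n$ and sum over $K\in\Tt$. Reorganizing the double sum $\sum_K \varphi_K^n \sum_{\sig \in \Sig_K}$ into a sum over faces $\sig = K|L \in \Sig$, and using the antisymmetry $\phi_K^n-\phi_L^n = -(\phi_L^n-\phi_K^n)$ (the discrete integration by parts), I obtain
\[
\sum_{K\in\Tt} m_K (\rho_K^n-\rho_K^{n-1})\varphi_K^n = -\tau \sum_{\substack{\sig\in\Sig\\\sig=K|L}} a_\sig \rho_\sig^n (\phi_K^n-\phi_L^n)(\varphi_K^n-\varphi_L^n).
\]
Summing over $n \in \{1,\dots,N\}$ and applying the discrete Cauchy--Schwarz inequality, the left-hand side of the lemma is bounded by
\[
\left( \sum_{n=1}^N \tau \sum_\sig a_\sig \rho_\sig^n (\phi_K^n-\phi_L^n)^2 \right)^{1/2}
\left( \sum_{n=1}^N \tau \sum_\sig a_\sig \rho_\sig^n (\varphi_K^n-\varphi_L^n)^2 \right)^{1/2}.
\]
The first factor is controlled by $\sqrt{\cter{cte:L2H1_phi_1}}$ thanks to estimate~\eqref{eq:L2H1_phi_1}.

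The main work is then to bound the second factor by $C\|\grad\varphi\|_{L^\infty(Q_T)}$. Since $\varphi\in C^\infty_c(Q_T)$, by the mean value theorem there exist $\x_K^n \in K$ and $\x_L^n \in L$ with $\varphi_K^n = \varphi(\x_K^n,t^n)$ and $\varphi_L^n = \varphi(\x_L^n,t^n)$, so using $\diam(K) \leq h_K$, $\diam(L) \leq h_L$, and the mesh regularity \eqref{eq:reg_1} giving $h_K,h_L \leq \zeta d_\sig$, one gets $|\varphi_K^n-\varphi_L^n| \leq (2\zeta+1) d_\sig \|\grad\varphi\|_{L^\infty(Q_T)}$. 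Together with $a_\sig d_\sig^2 = m_\sig d_\sig = d\,m_{\Delta_\sig}$, this yields
\[
\sum_{\sig=K|L} a_\sig \rho_\sig^n (\varphi_K^n-\varphi_L^n)^2 \leq (2\zeta+1)^2 d \|\grad\varphi\|_{L^\infty(Q_T)}^2 \sum_\sig m_{\Delta_\sig}\rho_\sig^n.
\]
By definition~\eqref{eq:rho_Sig_def} of $\rho_{\Sig,\tau}$ and Lemma~\ref{lem:rho_Sig}, the right-hand side is bounded by $(2\zeta+1)^2 d\,\cter{cte:rho_Sig} \|\grad\varphi\|_{L^\infty(Q_T)}^2$. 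Summing the factor $\tau$ over $n$ gives a further factor $T$, and the result follows with
\[
\cter{cte:drho_dt} = (2\zeta+1)\sqrt{T d\,\cter{cte:rho_Sig}\,\cter{cte:L2H1_phi_1}}.
\]

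The only mildly delicate point is the estimate on $|\varphi_K^n-\varphi_L^n|$: one needs to combine the smoothness of $\varphi$ with the mesh regularity \eqref{eq:reg_1} to convert the rough bound involving $h_K+h_L+d_\sig$ into a bound of order $d_\sig$, so that the geometric identity $m_\sig d_\sig = d\, m_{\Delta_\sig}$ can be exploited to match the mass-type bound \eqref{eq:rho_Sig} from Lemma~\ref{lem:rho_Sig}.
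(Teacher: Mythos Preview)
Your proof is correct and follows essentially the same approach as the paper: multiply the discrete continuity equation by $\varphi_K^n$, sum and reorganize as an edge sum, apply Cauchy--Schwarz, bound the first factor by \eqref{eq:L2H1_phi_1}, and bound the second factor by combining the Lipschitz estimate $|\varphi_K^n-\varphi_L^n|\lesssim d_\sig\|\grad\varphi\|_\infty$ with the $L^1$ bound on $\rho_{\Sig,\tau}$ from Lemma~\ref{lem:rho_Sig}. The only cosmetic difference is in the constant: the paper routes the estimate of $|\wt\x_K-\wt\x_L|$ through the cell centers $\x_K,\x_L$ (using both \eqref{eq:reg_1} and \eqref{eq:reg_2}) to obtain $(1+2\zeta(1+\zeta))d_\sig$, whereas you go directly through a point on the shared face and need only \eqref{eq:reg_1}, which is slightly more economical.
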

\begin{proof}
Multiplying~\eqref{eq:FP_cons} by $\varphi_K^n$ and summing over $K\in\Tt$ and $n\in\{1,\dots, N\}$ yields
\[
A := \sum_{n=1}^N \sum_{K\in\Tt} m_K (\rho_K^n - \rho_K^{n-1}) \varphi_K = - \sum_{n=1}^N \tau 
\sum_{\sig = K|L \in \Sig} a_\sig \rho_\sig^n (\phi_K^n - \phi_L^n) (\varphi_K^n - \varphi_L^n).
\]
Applying Cauchy-Schwarz inequality on the right-hand side then provides
\be\label{eq:A^2}
A^2 \leq \left(\sum_{n=1}^N \tau \sum_{\sig = K|L \in \Sig} a_\sig \rho_\sig^n (\phi_K^n - \phi_L^n)^2 \right)
\left(\sum_{n=1}^N \tau \sum_{\sig = K|L \in \Sig} a_\sig \rho_\sig^n (\varphi_K^n - \varphi_L^n)^2 \right).
\ee
The first term in the right-hand side is bounded thanks to Lemma~\ref{lem:L2H1_phi}. On the other hand, 
the regularity of $\varphi$ ensures that there exists $\wt \x_K \in K$ such that $\varphi(\x_K,t^n) = \varphi_K^n$ for all $K\in\Tt$.
Thanks to the regularity assumptions~\eqref{eq:reg_1}--\eqref{eq:reg_2} on the mesh, there holds
\[
|\varphi_K^n - \varphi_L^n| \leq \|\grad \varphi\|_{\infty}|\wt\x_K - \wt\x_L| \leq  (1+2\zeta(1+\zeta)) \|\grad \varphi\|_{\infty} d_\sig, \qquad \sig = K|L.
\]
Hence, the second term of the right-hand side in~\eqref{eq:A^2} can be overestimated by 
\begin{align*}
\sum_{n=1}^N \tau \sum_{\sig = K|L \in \Sig} a_\sig \rho_\sig^n (\varphi_K^n - \varphi_L^n)^2 
\leq&\; (1+2\zeta(1+\zeta))^2 \|\grad \varphi\|^2_\infty \sum_{n=1}^N \tau \sum_{\sig = K|L \in \Sig} m_\sig d_\sig \rho_\sig^n \\
\leq&\; (1+2\zeta(1+\zeta))^2 d \|\grad \varphi\|^2_\infty \iint_{Q_T}\rho_{\Sig,\tau}\d\x\d t \\
\leq&\; (1+2\zeta(1+\zeta))^2  \cter{cte:rho_Sig} T d \|\grad \varphi\|_\infty^2, 
\end{align*}
the last inequality being a consequence of Lemma~\ref{lem:rho_Sig}.
Combining all this material in~\eqref{eq:A^2} shows the desired estimate with 
$\cter{cte:drho_dt}= (1+2\zeta(1+\zeta)) \sqrt{\cter{cte:L2H1_phi_1} \cter{cte:rho_Sig} T d}$.
\end{proof}
		
\subsection{Compactness of the approximate solution}\label{ssec:compact}

The goal of this section is to show enough compactness in order to be able to pass to the limit $m\to\infty$.
For the sake of readability, we remove the subscript $m$ unless necessary. 

Owing to Lemma~\ref{lem:Ee_Tt0}, one has $\Ee_\Tt(\brho^n) \leq \cter{cte:L2H1_phi_1}$ for all $n\in\{1,\dots,N\}$. 
Proceeding as in the proof of Lemma~\ref{lem:rho_Sig}, this allows to show that 
\be\label{eq:rhologrho_Tt}
\int_\O \rho_{\Tt,\tau}(\x,t) \log \rho_{\Tt,\tau}(\x,t) \d\x \leq \cter{cte:rhologrho}, \qquad \forall t \in (0,T]
\ee
for some $\ctel{cte:rhologrho}$ depending only on $\rhoe^0$, $\zeta$ and $V$.
Combining de La Vallée Poussin's theorem with Dunford-Pettis' one \cite[Ch. XI, Theorem 3.6]{Visintin96}, 
there exists $\rhoe \in L^\infty((0,T);L^1(\O))$ such that, up to a subsequence, 
\be\label{eq:L1_weak}
\text{$\rho_{\Tt_m,\tau_m}$ tends to $\rhoe$ 
weakly in $L^1(Q_T)$ as $m$ tends to $+\infty$.}
\ee
Since $\rho\mapsto \rho\log\rho$ is convex, $f\mapsto\iint_{Q_T}f\log f\d\x\d t$ is l.s.c. for the weak convergence 
in $L^1(Q_T)$ (see for instance~\cite[Corollary 3.9]{Brezis11}), 
so that~\eqref{eq:rhologrho_Tt} yields 
\be\label{eq:rhologrho}
\iint_{Q_T} \rhoe\log\rhoe \d\x\d t \leq  \cter{cte:rhologrho} T.
\ee
Moreover, since $\rho_{\Tt,\tau} \geq \alpha$ thanks to Lemma~\ref{lem:inf_phi}, then $\rhoe \geq \alpha$ too.

Our goal is to show that $\rhoe$ is the unique weak solution 
to the Fokker-Planck equation~\eqref{eq:FokkerPlanck} corresponding to the initial data $\rhoe^0$. 
Even though the continuous problem is linear, \eqref{eq:L1_weak} is not enough to pass to the limit in our nonlinear scheme.
Refined compactness have to be derived in this section so that one can identify $\rhoe$ as the solution to 
\eqref{eq:FokkerPlanck} in the next section. To show enhanced compactness (and most of all the 
consistency of the scheme in the next section), we have to assume that the initial data is bounded away from $0$. 

\begin{prop}\label{prop:compact}
Assume that $\rhoe^0 \geq \rho_\star \in (0,+\infty)$, then, up to a subsequence, 
\begin{eqnarray}
\label{eq:conv_rho_Tt}
&\rho_{\Tt_m,\tau_m} \underset{m\to\infty}\longrightarrow \rhoe \quad&\text{strongly in $L^1(Q_T)$},\\
\label{eq:conv_log_Tt}
&\log\rho_{\Tt_m,\tau_m} \underset{m\to\infty}\longrightarrow \log\rhoe \quad&\text{strongly in $L^1(Q_T)$},\\
\label{eq:conv_phi_Tt}
&\phi_{\Tt_m,\tau_m} \underset{m\to\infty}\longrightarrow \log\rhoe + V \quad&\text{strongly in $L^1(Q_T)$}.
\end{eqnarray}
\end{prop}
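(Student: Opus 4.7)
The plan is to first obtain strong $L^1(Q_T)$ convergence of $\rho_{\Tt_m,\tau_m}$ via a discrete nonlinear Aubin-Simon compactness argument in the spirit of~\cite{ACM17}, and then to deduce the other two convergences. The Aubin-Simon lemma requires (i) a weak time-derivative control, which is provided by Lemma~\ref{lem:drho_dt}, and (ii) a discrete $L^2((0,T);H^1_\Tt)$-type bound on a strictly monotone transformation of $\rho_{\Tt,\tau}$. For (ii), the idea is to transfer the discrete $H^1_\Tt$ bound~\eqref{eq:L2H1_phi_2} on $\phi_{\Tt,\tau}$ (available thanks to the uniform lower bound $\brho^n \geq \alpha \1$ from Lemma~\ref{lem:inf_phi}) onto $\log\rho_{\Tt,\tau} + V_\Tt$ via the discrete HJ equation~\eqref{eq:FP_HJ}. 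Setting $\delta_K^n := \log \rho_K^n + V_K - \phi_K^n \geq 0$, \eqref{eq:FP_HJ} reads $\delta_K^n = \tfrac{\tau}{2m_K}\sum_{\sig'=K|L'\in\Sig_K} a_{\sig'}\big((\phi_K^n - \phi_{L'}^n)^+\big)^2$, and summing over $K$ then over $n$ with weight $\tau$ yields
\[
\iint_{Q_T}\delta_{\Tt,\tau}\,\d\x\,\d t \;=\; \tfrac{\tau}{2} \sum_{n=1}^N \tau \sum_{\sig=K|L\in\Sig} a_\sig(\phi_K^n - \phi_L^n)^2 \;\leq\; \tfrac{\tau}{2}\cter{cte:L2H1_phi_2},
\]
which tends to $0$. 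From this, combined with the decomposition $\log\rho_K^n - \log\rho_L^n = (\phi_K^n - \phi_L^n) + (V_L - V_K) + (\delta_K^n - \delta_L^n)$ and the smoothness of $V$, a discrete $L^2H^1_\Tt$ bound on $\log\rho_{\Tt,\tau} + V_\Tt$ should follow.

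Once strong $L^1(Q_T)$ convergence of $\rho_{\Tt_m,\tau_m}$ to $\rhoe$ is secured, passing to a further subsequence gives a.e. convergence. Since $\rho_{\Tt_m,\tau_m} \geq \alpha > 0$, continuity of $\log$ on $[\alpha,+\infty)$ yields a.e. convergence of $\log\rho_{\Tt_m,\tau_m}$ to $\log\rhoe$; combined with the domination $|\log \rho_{\Tt,\tau}| \leq |\log\alpha| + \rho_{\Tt,\tau}$, the uniform entropy bound~\eqref{eq:rhologrho_Tt}, and de La Vall\'ee-Poussin, Vitali's convergence theorem delivers strong $L^1(Q_T)$ convergence. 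Finally, decomposing $\phi_{\Tt,\tau} - (\log \rhoe + V) = (\log\rho_{\Tt,\tau} - \log \rhoe) + (V_\Tt - V) - \delta_{\Tt,\tau}$, with $V_\Tt \to V$ uniformly and $\delta_{\Tt,\tau} \to 0$ in $L^1(Q_T)$, proves $\phi_{\Tt_m,\tau_m} \to \log\rhoe + V$ strongly in $L^1(Q_T)$.

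The main difficulty lies in step~(ii): even though $\delta_{\Tt,\tau}$ is small in $L^1(Q_T)$, the increments $\delta_K^n - \delta_L^n$ that appear in the decomposition of $\log\rho_K^n - \log\rho_L^n$ are not automatically controlled in the weighted discrete $\ell^2$ norm with weights $\tau a_\sig$. One route is to exploit the sharper weighted estimate~\eqref{eq:L2H1_phi_1} together with the fact that in the Fokker-Planck setting the upwind value $\rho_\sig^n$ selects (up to the vanishing defect $\delta$) the larger of $\rho_K^n,\rho_L^n$, converting dissipation into a genuine gradient control of $\log\rho + V$ without passing through $\delta_K^n - \delta_L^n$; another is to invoke a version of the Aubin-Simon lemma tolerating an additive $o(1)$ perturbation on the discrete gradient bound, since the defect from a genuine $L^2H^1_\Tt$ estimate on $\log\rho + V$ is of order $\tau$. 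Once this technical point is resolved, the rest of the argument is routine.
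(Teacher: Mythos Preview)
Your proposal correctly isolates the key defect $\delta_K^n = r_K^n := \tfrac{\tau}{2m_K}\sum_{\sig'\in\Sig_K} a_{\sig'}\big((\phi_K^n-\phi_{L'}^n)^+\big)^2$ and its $L^1$ smallness, and the final deductions (\eqref{eq:conv_log_Tt}, \eqref{eq:conv_phi_Tt} from \eqref{eq:conv_rho_Tt}) are fine. But the core compactness step has a genuine gap, and neither of your proposed routes closes it. Controlling $\sum_n\tau\sum_\sig a_\sig(\delta_K^n-\delta_L^n)^2$ would require a bound on something like $\sum_\sig a_\sig(r_K^n)^2$, which involves \emph{fourth} powers of $\phi$-differences with weights $\tau^2 a_\sig/m_K^2$; nothing in the a~priori estimates gives this. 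Your Route~1 does not help: the weighted estimate \eqref{eq:L2H1_phi_1} is coarser than \eqref{eq:L2H1_phi_2} here (you lose the factor $\rho_\sig^n\ge\alpha$), and the upwind-selects-the-larger observation does not turn dissipation into a gradient control of $\log\rho$ without precisely the $\delta$-increments you cannot bound. Route~2 would require a nonstandard variant of discrete Aubin--Simon; it is not in~\cite{ACM17}.

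The paper's resolution is to abandon the $L^2_tH^1_\Tt$ target altogether. What \cite[Proposition~3.8]{ACM17} actually needs, besides the time control of Lemma~\ref{lem:drho_dt}, is only uniform equicontinuity of $\log\rho_{\Tt,\tau}$ under space translations in $L^1(Q_T)$. Writing $\log\rho_{\Tt,\tau}=\phi_{\Tt,\tau}+V_\Tt-r_{\Tt,\tau}$, this follows because $r_{\Tt,\tau}\to 0$ in $L^1(Q_T)$ and $V_\Tt\to V$ in $L^1(\O)$ (hence both are translation-equicontinuous by Riesz--Fr\'echet--Kolmogorov), while $\phi_{\Tt,\tau}$ is bounded in $L^1((0,T);BV(\O))$ by Cauchy--Schwarz on \eqref{eq:L2H1_phi_2}. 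This yields only that the \emph{product} $\rho_{\Tt,\tau}\log\rho_{\Tt,\tau}$ converges to the product of the weak limits $\rhoe\,\ell$; one then identifies $\ell=\log\rhoe$ via Minty's trick using the monotonicity of $\log$, and upgrades to a.e.\ (hence strong $L^1$ by Vitali) convergence through $c_m:=(\rho_{\Tt_m,\tau_m}-\rhoe)(\log\rho_{\Tt_m,\tau_m}-\log\rhoe)\ge 0$. In short: replace your step~(ii) by an $L^1$ translation estimate on $\log\rho$, and insert the Minty/monotonicity argument between product convergence and strong convergence.
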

\begin{proof}
Our proof of~\eqref{eq:conv_rho_Tt}--\eqref{eq:conv_log_Tt} relies on ideas introduced in~\cite{Moussa16} that 
were adapted to the discrete setting in~\cite{ACM17}.
Define the two convex and increasing conjugated functions defined on $\R_+$:
\[\Upsilon: x \mapsto e^x - x - 1\quad \text{and} \quad\Upsilon^*:y\mapsto (1+y)\log(1+y) - y,\]
then the following  inequality holds for any measurable functions $f,g:Q_T \to \R$:
\be\label{eq:Young}
\iint_{Q_T}|fg|\d\x\d t\leq \iint_{Q_T}\Upsilon(|f|)\d\x\d t +  \iint_{Q_T}\Upsilon^*(|g|)\d\x\d t.
\ee
Now, notice that since $\rho_{\Tt,\tau}$ is bounded from below thanks to Lemma~\ref{lem:inf_phi} and bounded in $L^1(Q_T)$, then 
$\log\rho_{\Tt,\tau}$ is bounded in $L^p(Q_T)$ for all $p\in[1,\infty)$ and $\Upsilon(|\log(\rho_{\Tt,\tau})|)$ is bounded in $L^1(Q_T)$. 
As a consequence, there exists $\ell \in L^\infty((0,T);L^p(\O))$ 
such that 
\be\label{eq:tralala}
\log\rho_{\Tt_m,\tau_m} \underset{m\to\infty}\longrightarrow \ell \quad\text{weakly in $L^1(Q_T)$.}
\ee
Since $f \mapsto \iint_{Q_T}\Upsilon(|f|)$ is convex thus l.s.c. for the weak convergence, 
we infer that $\Upsilon(|\ell|)$ belongs to $L^1(Q_T)$. Moreover, in view of~\eqref{eq:rhologrho}, $\Upsilon^*(\rhoe)$ belongs also to $L^1(Q_T)$.
Therefore, thanks to~\eqref{eq:Young}, the function $\rhoe\ell$ is in $L^1(Q_T)$. 

Define the quantities
	\[
	r_K^n = \frac{\tau}{2 m_K} a_\sig \sum_{\sigma \in \Sigma_{K}} \big( (\phi_K^n-\phi_L^n)^+ \big)^2 \geq 0, \, \forall K \in \Tt, \; \forall n\in\{1,\dots, N\}, 
	\]
and by $r_{\Tt,\tau} \in L^1(Q_T)$ the function defined 
\[
r_{\Tt,\tau}(\x,t) = 
r_K^n \quad \text{if}\; (\x,t) \in K\times(t^{n-1},t^n], 
\]
Thanks to Lemma~\ref{lem:L2H1_phi}, ${\|r_{\Tt,\tau}\|}_{L^1(Q_T)}\le \frac12 \cter{cte:L2H1_phi_2}\tau$.
As a consequence, $r_{\Tt_m,\tau_m}$ tends to $0$ in $L^1(Q_T)$ as $m$ tends to $+\infty$. 

Let $\bxi\in\R^d$ be arbitrary, we denote by $\O_\bxi = \{\x\in\O\; |\; \x+\bxi\in\O\}$. Then using~\eqref{eq:FP_HJ} and the triangle inequality, 
we obtain that for all $m\geq 1$, there holds
\[
\int_{0}^{T} \int_{\Omega_{\bxi}} \left| \log \rho_{\Tt_m, \tau_m}(\x+\bxi, t) - \log\rho_{\Tt_m,\tau_m}(\x,t)\right|\d\x\d t \leq A_{1,m}(\bxi) + A_{2,m}(\bxi) + A_{3,m}(\bxi), 
\]
where, denoting by  
$V_{\Tt}(\x) = V_K$ if $\x \in K$, 
we have set
\begin{align*}
A_{1,m}(\bxi) =&\; \int_{0}^{T} \int_{\Omega_{\bxi}} | r_{\Tt_m,\tau_m}(\x+\bxi,t)-r_{\Tt_m,\tau_m}(\x,t)| \d\x\d t,\\
A_{2,m}(\bxi) =&\; \int_{0}^{T} \int_{\Omega_{\bxi}} |\phi_{\Tt_m,\tau_m}(\x+\bxi,t)-\phi_{\Tt_m,\tau_m}(\x,t)| \d \x \d t, \\
A_{3,m}(\bxi) =&\; T\int_{\Omega_{\bxi}} |V_{\Tt_m}(\x+\bxi)-V_{\Tt_m}(\x)| \d\x.
\end{align*}
Since $\left(r_{\Tt_m,\tau_m}\right)_{m\geq1}$ and $\left(V_{\Tt_m}\right)_{m\geq1}$ are compact in $L^1(Q_T)$ and $L^1(\O)$ respectively, 
it follows from the Riesz-Frechet-Kolmogorov theorem (see for instance \cite[Exercise 4.34]{Brezis11}) that there exists 
 $\omega\in C(\R_+;\R_+)$ with $\omega(0) = 0$ such that   
\be\label{eq:A1+A3}
A_{1,m}(\bxi) + A_{3,m}(\bxi) \le \omega(|\bxi|), \qquad \forall \bxi\in\R^d,\;\forall m \geq 0.
\ee
On the other hand, the function $\phi_{\Tt,\tau}$ belongs to $L^1((0,T);BV(\O))$ and the integral in time of its total variation in space 
can be estimated as follows:
\begin{align*}
\iint_{Q_T}| \grad \phi_{\Tt_m,\tau_m}| = &\;\sum_{n=1}^N \tau \sum_{\sig =K|L\in \Sig} m_\sig |\phi_K^n - \phi_L^n| \\
\leq &\;\left(d|\O|T  \sum_{n=1}^N \tau \sum_{\sig =K|L\in \Sig} m_\sig (\phi_K^n - \phi_L^n)^2 \right)^{1/2} \leq \cter{cte:BV}.
\end{align*}
with  $\ctel{cte:BV}=\sqrt{d|\O|T \cter{cte:L2H1_phi_2}}$.
This implies in particular that $A_{2,m}(\bxi) \le \cter{cte:BV}|\bxi|$ for all $m\geq 1$.
Combining this estimate with~\eqref{eq:A1+A3} in~\eqref{eq:FP_HJ} yields
\be\label{eq:compact_x}
\sup_{m\geq 1} \int_{0}^{T} \int_{\Omega_{\xi}} |\log \rho_{\Tt_m,\tau_m}(\x+\bxi,t)-\log \rho_{\Tt_m,\tau_m}(\x,t)| \d\x\d t \underset{|\bxi|\to 0}\longrightarrow 0.
\ee
The combination of~\eqref{eq:compact_x} with Lemma~\ref{lem:drho_dt} is exactly what one needs to reproduce the proof of~\cite[Proposition 3.8]{ACM17}, 
which shows that the product of the weakly convergent sequences ${(\rho_{\Tt_m,\tau_m})}_m$ and ${(\log\rho_{\Tt_m,\tau_m})}_{m}$ 
converges towards the product of their weak limits:
\be\label{eq:Moussa}
\iint_{Q_T}\rho_{\Tt_m,\tau_m} \log\rho_{\Tt_m,\tau_m} \varphi \d\x\d t \underset{m\to\infty}\longrightarrow 
\iint_{Q_T}\rhoe\ell \varphi \d\x\d t, \qquad \forall \varphi \in C^\infty_c(Q_T).
\ee
Let us now identify $\ell$ as $\log(\rhoe)$ thanks to Minty's trick. Let $\k>0$ and $\varphi \in C^\infty_c(Q_T; \R_+)$ be arbitrary, then 
thanks to~\eqref{eq:Moussa}, 
\[
0 \leq \iint_{Q_T}\left(\rho_{\Tt_m,\tau_m} - \k\right)\left(\log\rho_{\Tt_m,\tau_m} - \log\k\right)\varphi \d\x \d t
\underset{m\to\infty}\longrightarrow  \iint_{Q_T}\left(\rhoe - \k\right)\left(\ell - \log\k\right)\varphi \d\x \d t.
\]
As a consequence, $\left(\rhoe - \k\right)\left(\ell - \log\k\right) \geq 0$ a.e. in $Q_T$ for all $\k>0$, which holds if and only if $\ell = \log\rhoe$.
To finalize the proof of~\eqref{eq:conv_rho_Tt}--\eqref{eq:conv_log_Tt}, define 
\[
c_m = (\rho_{\Tt_m,\tau_m} - \rhoe)(\log\rho_{\Tt_m,\tau_m} - \log\rhoe) \in L^1(Q_T;\R_+), \qquad \forall m \geq 1. 
\]
Then \eqref{eq:Moussa} implies that 
\[
\iint_{Q_T}c_m \varphi \d\x \d t \underset{m\to\infty}\longrightarrow 0, \qquad  \forall \varphi \in C^\infty_c(Q_T),\;\varphi\geq 0.
\]
As a consequence, $c_m$ tends to $0$ almost everywhere in $Q_T$, which implies the $\rho_{\Tt_m,\tau_m}$ 
tends almost everywhere towards $\rhoe$ (up to a subsequence). 
Then~\eqref{eq:conv_rho_Tt}--\eqref{eq:conv_log_Tt} follow from Vitali's convergence theorem (see for instance~\cite[Chap. XI, Theorem 3.9]{Visintin96}).

Finally, one has $\phi_{\Tt,\tau} = \log\rho_{\Tt,\tau} + V_\Tt - r_{\Tt,\tau}$. In view of the above discussion, 
the right-hand side converges strongly in $L^1(Q_T)$ up to a subsequence towards $\log\rhoe + V$, then so does the left-hand side. 
This provides~\eqref{eq:conv_phi_Tt} and concludes the proof of Proposition~\ref{prop:compact}.
\end{proof}

Next lemma shows that $\rho_{\Sig,\tau}$ shares the same limit $\rhoe$ as $\rho_{\Tt,\tau}$. 
\begin{lem}\label{lem:conv_rho_Sig}
Assume that  $\rhoe^0 \geq \rho_\star \in (0,+\infty)$, then 
\[
\left\|\rho_{\Sig_m,\tau_m} - \rho_{\Tt_m,\tau_m}\right\|_{L^1(Q_T)} \underset{m\to\infty}\longrightarrow 0. 
\]
\end{lem}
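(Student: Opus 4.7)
The two reconstructions $\rho_{\Sigma_m,\tau_m}$ and $\rho_{\mathcal{T}_m,\tau_m}$ coincide on $K\setminus\bigcup_{\sigma\in\Sigma_K}\Delta_\sigma$ for every cell $K$, and on each diamond $\Delta_\sigma$ both functions take values in $\{\rho_K^n,\rho_L^n\}$. A direct pointwise inspection therefore yields
\[
\|\rho_{\Sigma_m,\tau_m}-\rho_{\mathcal{T}_m,\tau_m}\|_{L^1(Q_T)}\leq \sum_{n=1}^{N_m}\tau_m\sum_{\sigma=K|L\in\Sigma_m} m_{\Delta_\sigma}\,|\rho_K^n-\rho_L^n|,
\]
so the task reduces to extracting an $o(1)$ bound from the right-hand side as $m\to\infty$.

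My plan is to factorize $|\rho_K-\rho_L|=(\sqrt{\rho_K}+\sqrt{\rho_L})|\sqrt{\rho_K}-\sqrt{\rho_L}|$ and apply Cauchy--Schwarz against the joint measure $\tau_m\sum_n\sum_\sigma m_{\Delta_\sigma}(\cdot)$. The resulting ``large'' factor $\sum_n\tau_m\sum_\sigma m_{\Delta_\sigma}(\sqrt{\rho_K^n}+\sqrt{\rho_L^n})^2\leq 4\iint_{Q_T}\rho_{\Sigma_m,\tau_m}\,\d\x\,\d t$ is uniformly bounded via Lemma~\ref{lem:rho_Sig}. The ``small'' factor $\sum_n\tau_m\sum_\sigma m_{\Delta_\sigma}(\sqrt{\rho_K^n}-\sqrt{\rho_L^n})^2$, using the identity $m_{\Delta_\sigma}=d_\sigma^2 a_\sigma/d$ together with $d_\sigma\leq \zeta h_{\mathcal{T}_m}$ from \eqref{eq:reg_1}, is no greater than $(\zeta h_{\mathcal{T}_m})^2/d$ times the discrete Fisher information $\mathcal{F}_m:=\sum_n\tau_m\sum_\sigma a_\sigma(\sqrt{\rho_K^n}-\sqrt{\rho_L^n})^2$. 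The entire estimate then reduces to proving that $\mathcal{F}_m\leq C$ uniformly in $m$.

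For this last bound I would invoke the elementary inequality $(\sqrt{a}-\sqrt{b})^2\leq \tfrac14(a-b)(\log a-\log b)$ (obtained by Cauchy--Schwarz applied to $\sqrt{a}-\sqrt{b}=\int_b^a \d x/(2\sqrt{x})$) to majorize $\mathcal{F}_m$ by a constant times $\sum_n\tau_m\sum_\sigma a_\sigma(\rho_K^n-\rho_L^n)(u_K^n-u_L^n)$ with $u=\log\rho$. The HJ identity~\eqref{eq:FP_HJ} then splits $u_K-u_L=(\phi_K-\phi_L)+(r_K-r_L)-(V_K-V_L)$; the $V$-piece is $O(1)$ by Lipschitz continuity of $V$, the $r$-piece vanishes since $\|r_{\mathcal{T},\tau}\|_{L^1(Q_T)}=O(\tau_m)$ (as computed in the proof of Lemma~\ref{lem:L2H1_phi}), and the main $\phi$-piece is attacked by a further Cauchy--Schwarz pairing $\sqrt{a_\sigma\rho_\sigma^n}(\phi_K^n-\phi_L^n)$ (controlled in the $\ell^2$-sense by Lemma~\ref{lem:L2H1_phi}) against a complementary factor involving $(\rho_K^n-\rho_L^n)/\sqrt{\rho_\sigma^n}$, finally closed by absorption using the positivity lower bound $\rho\geq\alpha>0$ of Lemma~\ref{lem:inf_phi}. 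The conclusion $\|\rho_{\Sigma_m,\tau_m}-\rho_{\mathcal{T}_m,\tau_m}\|_{L^1(Q_T)}=O(h_{\mathcal{T}_m})\to 0$ follows. The main obstacle is precisely this Fisher bound: because $\rho_{\mathcal{T},\tau}$ is only known to lie in $L^\infty(0,T;L\log L(\Omega))$, and because Lemma~\ref{lem:L2H1_phi} weighs the discrete gradient of $\phi$ by the \emph{upstream} mobility $\rho_\sigma$ rather than $\max(\rho_K,\rho_L)$, the Cauchy--Schwarz has to be arranged carefully so as to expose only quantities already controlled by the available estimates and to permit self-absorption of the remaining contribution.
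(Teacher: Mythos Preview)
Your initial reduction and the Cauchy--Schwarz splitting into a ``large'' factor and a ``small'' Fisher-type factor are fine. The gap is in the claimed uniform bound $\mathcal{F}_m\leq C$. For the $\phi$-piece, your Cauchy--Schwarz leaves the complementary factor $\sum_n\tau\sum_\sigma a_\sigma(\rho_K^n-\rho_L^n)^2/\rho_\sigma^n$; using $\rho_\sigma^n\geq\alpha$ reduces this to $\alpha^{-1}\sum a_\sigma(\rho_K^n-\rho_L^n)^2=(\alpha)^{-1}\sum a_\sigma(\sqrt{\rho_K^n}+\sqrt{\rho_L^n})^2(\sqrt{\rho_K^n}-\sqrt{\rho_L^n})^2$, and absorbing this into $\mathcal{F}_m$ would require a uniform upper bound on $(\sqrt{\rho_K^n}+\sqrt{\rho_L^n})^2$, i.e.\ an $L^\infty$ bound on $\rho$, which is nowhere available (only $L\log L$ is). The upstream weight $\rho_\sigma^n$ in Lemma~\ref{lem:L2H1_phi} need not be $\max(\rho_K^n,\rho_L^n)$, so you cannot trade it for the sum $\rho_K^n+\rho_L^n$ either. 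The $r$-piece is also not handled: the smallness $\|r_{\mathcal{T},\tau}\|_{L^1(Q_T)}=O(\tau)$ carries the cell weights $m_K$, whereas the quantity you must bound is $\sum_n\tau\sum_\sigma a_\sigma(\rho_K^n-\rho_L^n)(r_K^n-r_L^n)$ with the transmissivities $a_\sigma=m_\sigma/d_\sigma$; the extra $1/d_\sigma$ is not compensated, so this term is \emph{not} $o(1)$ in general (it behaves at best like $\tau/h^2$ after crude bounds).

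The paper avoids a Fisher-information estimate altogether. It keeps the diamond weights $m_{\Delta_\sigma}$ (which already contain the needed factor $d_\sigma$) and works with $\log\rho$ rather than $\rho$: from the HJ identity $\log\rho_K^n=\phi_K^n+r_K^n-V_K$ one gets
\[
\|\log\rho_{\Sigma,\tau}-\log\rho_{\mathcal{T},\tau}\|_{L^1(Q_T)}\leq \sum_n\tau\sum_\sigma m_{\Delta_\sigma}\bigl(|\phi_K^n-\phi_L^n|+|r_K^n-r_L^n|+|V_K-V_L|\bigr),
\]
and each term is shown to be $o(1)$ using only $m_{\Delta_\sigma}\leq \zeta h_\mathcal{T} m_\sigma/d$, Lemma~\ref{lem:L2H1_phi}, the $L^1$-smallness of $r$ together with~\eqref{eq:reg_3}, and Lipschitz continuity of $V$. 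This gives a.e.\ convergence of $\rho_{\Sigma,\tau}-\rho_{\mathcal{T},\tau}$ to $0$ (via the lower bound $\rho\geq\alpha$), and strong $L^1$ convergence then follows from Vitali's theorem, the uniform integrability being supplied by the $L\log L$ bound of Lemma~\ref{lem:rho_Sig}.
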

\begin{proof}
Thanks to Lemma~\ref{lem:rho_Sig}, it follows from the de La Vallée-Poussin and Dunford Pettis theorems that 
$\left(\rho_{\Sig_m,\tau_m}\right)_{m\geq1}$ is relatively compact for the weak topology of $L^1(Q_T)$.
Combining this with~\eqref{eq:L1_weak}, we infer that, up to a subsequence, $\left(\rho_{\Sig_m,\tau_m} - \rho_{\Tt_m,\tau_m}\right)_{m\geq1}$ 
converges towards some $w$ weakly in $L^1(Q_T)$. Thanks to Vitali's convergence theorem, it suffices to show that 
from any subsequence of $\left(\rho_{\Sig_m,\tau_m} - \rho_{\Tt_m,\tau_m}\right)_{m\geq1}$, one can extract a subsequence that
 tends to $0$ a.e. in $Q_T$ (so that the whole sequence converges towards $w=0$), or equivalently 
 \be\label{eq:poipoi}
 \left\|\log\rho_{\Sig_m,\tau_m} - \log\rho_{\Tt_m,\tau_m}\right\|_{L^1(Q_T)} \underset{m\to\infty}\longrightarrow 0, 
 \ee
 since both 
 $\left(\rho_{\Sig_m,\tau_m}\right)_{m\geq 1}$ and 
$\left(\rho_{\Tt_m,\tau_m}\right)_{m\geq 1}$ are bounded away from $0$ thanks to Lemma~\ref{lem:inf_phi}.
Bearing in mind the definition~\eqref{eq:rho_Sig_def} of $\rho_{\Sig_m,\tau_m}$, and 
one has 
\[
 \left\|\log\rho_{\Sig,\tau} - \log\rho_{\Tt,\tau}\right\|_{L^1(Q_T)} 
 \leq  
 \sum_{n=1}^N \tau  \sum_{\sig=K|L \in \Sig}m_{\Delta_\sig} |\log\rho_K^n - \log\rho_L^n|.
 \]
 Using~\eqref{eq:FP_HJ} and the triangle inequality, one gets that 
 \[
  \left\|\log\rho_{\Sig,\tau} - \log\rho_{\Tt,\tau}\right\|_{L^1(Q_T)}  \leq R_1 + R_2 + T R_3,
 \]
 with 
\[
 R_1 = \sum_{n=1}^N \tau  \sum_{\sig=K|L \in \Sig}m_{\Delta_\sig} |\phi_K^n - \phi_L^n|, \qquad
 R_2 = \sum_{n=1}^N \tau  \sum_{\sig=K|L \in \Sig}m_{\Delta_\sig} |r_K^n - r_L^n|, \]
 and 
 \[
 R_3 =  \sum_{\sig=K|L \in \Sig}m_{\Delta_\sig} |V_K - V_L|.
 \]
 Using again that $d m_{\Delta_\sig} = d_\sig m_\sig \leq \zeta h_\Tt m_\sig$ thanks to~\eqref{eq:reg_1}, one has 
 \[
 R_1 \leq \frac{\zeta}d h_\Tt  \sum_{n=1}^N \tau  \sum_{\sig=K|L \in \Sig}m_{\sig} |\phi_K^n - \phi_L^n| \leq \frac{\cter{cte:BV} \zeta}d h_\Tt 
  \underset{m\to\infty}\longrightarrow 0.
 \]
 Since $|r_K^n - r_L^n| \leq r_K^n + r_L^n$, the regularity assumption~\eqref{eq:reg_3} on the mesh implies that 
 \[
 R_2 \leq \sum_{n=1}^N \tau \sum_{K\in\Tt} \sum_{\sig \in \Sig_K} m_{\Delta_\sig} r_K^n \leq \zeta \|r_{\Tt,\tau}\|_{L^1(Q_T)}
 \underset{m\to\infty}\longrightarrow 0.
 \]
Since $V$ is Lipschitz continuous, $ |V_K - V_L| \leq \|\grad V\|_\infty d_\sig \leq \zeta \|\grad V\|_\infty h_\Tt$ for all $\sig = K|L \in\Sig$
thanks to~\eqref{eq:reg_1}. Therefore, 
\[
R_3 \leq  \zeta \|\grad V\|_\infty |\O| h_\Tt  \underset{m\to\infty}\longrightarrow 0, 
\]
so that~\eqref{eq:poipoi} holds, concluding the proof of Lemma~\ref{lem:conv_rho_Sig}.
\end{proof}

\subsection{Convergence towards the unique weak solution}\label{ssec:identify}

Our next lemma is a important step towards the identification of the limit $\rhoe$ as a weak solution to the continuous 
Fokker-Planck equation~\eqref{eq:FokkerPlanck}. Define the vector field $\bF_{\Sig,\tau}: Q_T \to \R^d$ by 
\[
\bF_{\Sig,\tau}(\x,t) = \begin{cases}
d \rho_\sig^n\frac{\phi_K^n - \phi_L^n}{d_\sig} \n_{K\sig} & \text{if}\;(\x,t) \in \Delta_\sig\times(t^{n-1},t^n], \\
0 & \text{otherwise}.
\end{cases}
\]
\begin{lem}\label{prop:flux}
Assume that $\rhoe^0 \geq \rho_\star \in (0,+\infty)$, then, up to a subsequence, 
the vector field $\bF_{\Sig_m,\tau_m}$ converges weakly in $L^1(Q_T)^d$ towards $-\grad \rhoe - \rhoe \grad V$ as 
$m$ tends to $+\infty$. Moreover, $\sqrt\rhoe$ belongs to $L^2((0,T);H^1(\O))$, while $\rhoe$ belongs to $L^2((0,T);W^{1,1}(\O))$.
\end{lem}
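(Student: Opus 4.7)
The plan is to first obtain weak $L^1$ compactness of $\bF_{\Sig,\tau}$ and then identify its limit via the discrete Hamilton--Jacobi equation~\eqref{eq:FP_HJ}. Using Lemma~\ref{lem:L2H1_phi} together with the identity $m_{\Delta_\sig}=m_\sig d_\sig/d$, one rewrites
\[
\iint_{Q_T}\frac{|\bF_{\Sig,\tau}|^2}{\rho_{\Sig,\tau}}\d\x\d t
= d\sum_{n=1}^N\tau\sum_{\sig=K|L}a_\sig\rho_\sig^n(\phi_K^n-\phi_L^n)^2\leq d\,\cter{cte:L2H1_phi_1},
\]
with the convention $0/0=0$. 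Combined with the $L\log L$ bound~\eqref{eq:H_Sig} on $\rho_{\Sig,\tau}$, the Cauchy--Schwarz inequality $\int_E|\bF_{\Sig,\tau}|\leq(\int_E|\bF_{\Sig,\tau}|^2/\rho_{\Sig,\tau})^{1/2}(\int_E\rho_{\Sig,\tau})^{1/2}$ ensures that $(\bF_{\Sig_m,\tau_m})_m$ is bounded and equi-integrable in $L^1(Q_T)^d$. The Dunford--Pettis theorem then yields, up to a subsequence, $\bF_{\Sig_m,\tau_m}\rightharpoonup\bG$ weakly in $L^1(Q_T)^d$.

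Using~\eqref{eq:FP_HJ}, I would split $\bF_{\Sig,\tau}=\bF_{\Sig,\tau}^\rho+\bF_{\Sig,\tau}^V-\bF_{\Sig,\tau}^r$ where the three pieces, restricted to $\Delta_\sig\times(t^{n-1},t^n]$, have normal components $d\rho_\sig^n(\log\rho_K^n-\log\rho_L^n)/d_\sig$, $d\rho_\sig^n(V_K-V_L)/d_\sig$, and $d\rho_\sig^n(r_K^n-r_L^n)/d_\sig$ respectively. Since the inflated discrete gradient of $V$ converges uniformly to $\nabla V$ under~\eqref{eq:reg_mesh} (classical TPFA consistency) and $\rho_{\Sig_m,\tau_m}\to\rhoe$ strongly in $L^1(Q_T)$ by Proposition~\ref{prop:compact} and Lemma~\ref{lem:conv_rho_Sig}, one obtains $\bF_{\Sig_m,\tau_m}^V\to\rhoe\nabla V$ strongly in $L^1(Q_T)^d$. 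For the $r$-piece, a Cauchy--Schwarz estimate analogous to the one above, together with $\|r_{\Tt,\tau}\|_{L^1(Q_T)}=O(\tau)$ (a consequence of Lemma~\ref{lem:L2H1_phi}) and the lower bound $\brho^n\geq\alpha\1$ from Lemma~\ref{lem:inf_phi}, gives $\bF_{\Sig_m,\tau_m}^r\to\0$ in $L^1(Q_T)^d$.

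It remains to identify $\bF_{\Sig,\tau}^\rho$ with $-\nabla\rhoe$. The elementary inequality $(\sqrt{a}-\sqrt{b})^2\leq \max(a,b)(\log a-\log b)^2$, combined with the observation that the upwind rule~\eqref{eq:upwind-1} and~\eqref{eq:FP_HJ} force $\rho_\sig^n\geq(1-O(h_\Tt+\tau))\max(\rho_K^n,\rho_L^n)$, allows to deduce from the dissipation bound the discrete Fisher-type estimate $\sum_n\tau\sum_{\sig=K|L}a_\sig(\sqrt{\rho_K^n}-\sqrt{\rho_L^n})^2\leq C$. A lower semi-continuity argument along the piecewise reconstruction then yields $\sqrt\rhoe\in L^2((0,T);H^1(\O))$, whence $\nabla\rhoe=2\sqrt\rhoe\,\nabla\sqrt\rhoe\in L^2((0,T);L^1(\O))$ thanks to $\rhoe\in L^\infty((0,T);L^1(\O))$. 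Testing $\bF_{\Sig,\tau}^\rho$ against $\bxi\in C_c^\infty(Q_T)^d$ and performing discrete integration by parts transfers the discrete gradient from $\log\rho_\Tt$ onto $\bxi$; the strong convergence $\log\rho_{\Tt_m,\tau_m}\to\log\rhoe$ from Proposition~\ref{prop:compact} then identifies the weak limit of $\bF_{\Sig,\tau}^\rho$ as $-\nabla\rhoe$, concluding the proof.

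The main technical obstacle I expect is this last identification, since the upwind choice of $\rho_\sig^n$ makes $\rho_\sig^n(\log\rho_K^n-\log\rho_L^n)$ differ from a centered discrete gradient of $\rhoe$ by an error that, although asymptotically negligible thanks to the lower bound $\brho^n\geq\alpha\1$ and the refinement of the mesh, has to be controlled uniformly in $m$. Splitting $\rho_\sig^n=\max(\rho_K^n,\rho_L^n)+(\rho_\sig^n-\max)$ and bounding the defect through the $V$ and $r$ contributions to~\eqref{eq:FP_HJ} is the key step that turns the formal computation into a rigorous passage to the limit.
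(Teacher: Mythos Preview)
Your approach differs substantially from the paper's, and it carries two genuine gaps that the paper's route avoids entirely.

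\textbf{What the paper does.} The paper never decomposes $\bF_{\Sig,\tau}$ via the HJ equation. Instead it introduces the inflated discrete gradient $\bG_{\Sig,\tau}$ of $\phi_{\Tt,\tau}$, so that $\bF_{\Sig,\tau}=-\rho_{\Sig,\tau}\bG_{\Sig,\tau}$. Thanks to the lower bound $\brho^n\ge\alpha\1$, the \emph{unweighted} estimate~\eqref{eq:L2H1_phi_2} gives $\|\bG_{\Sig,\tau}\|_{L^2(Q_T)^d}^2\le d\,\cter{cte:L2H1_phi_2}$. Combined with the strong $L^1$ convergence $\phi_{\Tt,\tau}\to\log\rhoe+V$ from Proposition~\ref{prop:compact}, the weak consistency of the inflated gradient \cite{CHLP03,EG03} yields $\bG_{\Sig,\tau}\rightharpoonup\grad(\log\rhoe+V)$ weakly in $L^2$. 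One then writes $\bH_{\Sig,\tau}=\sqrt{\rho_{\Sig,\tau}}\,\bG_{\Sig,\tau}$ and $\bF_{\Sig,\tau}=-\sqrt{\rho_{\Sig,\tau}}\,\bH_{\Sig,\tau}$; since $\sqrt{\rho_{\Sig,\tau}}\to\sqrt\rhoe$ strongly in $L^2$, two successive weak-$\times$-strong product arguments in $L^2$ give both $\sqrt\rhoe\in L^2((0,T);H^1(\O))$ and the identification of the weak $L^1$ limit of $\bF_{\Sig,\tau}$. No discrete integration by parts, no splitting.

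\textbf{Where your argument breaks.} First, the control of $\bF^r_{\Sig,\tau}$ does not follow from ``Cauchy--Schwarz together with $\|r_{\Tt,\tau}\|_{L^1}=O(\tau)$ and $\brho^n\ge\alpha\1$''. The Cauchy--Schwarz inequality you invoke leads to $\|\bF^r\|_{L^1}^2\le d\,\|\rho_{\Sig,\tau}\|_{L^1}\sum_{n}\tau\sum_{\sig=K|L}a_\sig\rho_\sig^n(r_K^n-r_L^n)^2$, and nothing in the available estimates bounds this discrete weighted $H^1$ seminorm of $r$; the $L^1$ smallness of $r$ says nothing about its edge differences, and the lower bound on $\rho$ acts in the wrong direction here. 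Second, the claim ``$\rho_\sig^n\ge(1-O(h_\Tt+\tau))\max(\rho_K^n,\rho_L^n)$'' is not justified: from~\eqref{eq:FP_HJ} one only gets $\log\rho_\sig^n\ge\log\max(\rho_K^n,\rho_L^n)-|V_K-V_L|-|r_K^n-r_L^n|$, and while $|V_K-V_L|=O(h_\Tt)$, the quantity $|r_K^n-r_L^n|$ is not uniformly small pointwise (only in $L^1$), so your Fisher-type estimate does not follow as stated.

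Both difficulties disappear once you work with $\phi$ directly as the paper does: the $r$-term has already been absorbed into the strong $L^1$ convergence of $\phi_{\Tt,\tau}$, and the weak consistency of the inflated gradient replaces all the delicate upwind bookkeeping.
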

\begin{proof}
Let us introduce the inflated discrete gradient $\bG_{\Sig,\tau}$ of $\phi_{\Tt,\tau}$ defined by
\[
\bG_{\Sig,\tau}(\x,t) = \begin{cases}
d \frac{\phi_L^n - \phi_K^n}{d_\sig} \n_{K\sig} & \text{if}\;(\x,t) \in \Delta_\sig\times(t^{n-1},t^n], \\
0 & \text{otherwise}, 
\end{cases}
\]
so that $\bF_{\Sig,\tau} = - \rho_{\Sig,\tau}\bG_{\Sig,\tau}$. Thanks to Lemma~\ref{lem:L2H1_phi}, 
\[
\|\bG_{\Sig,\tau}\|_{L^2(Q_T)^d}^2 = d \sum_{n=1}^N \tau \sum_{\sig=K|L\in\Sig} a_\sig (\phi_K^n - \phi_L^n)^2 \leq d \cter{cte:L2H1_phi_2}, 
\]
thus we know that, up to a subsequence, $\bG_{\Sig,\tau}$ converges weakly towards some $\bG$ in $L^2(Q_T)^d$ as $m$ tends to $+\infty$.
Since $\phi_{\Tt,\tau}$ tends to $\log\rhoe + V$, cf.~\eqref{eq:conv_phi_Tt}, then the weak consistency of the inflated gradient~\cite{CHLP03,EG03} implies that 
$\bG = \grad (\log\rhoe+V)$. 

Define now $\bH_{\Sig,\tau}=\sqrt{\rho_{\Sig,\tau}}\bG_{\Sig,\tau}$, then using again Lemma~\ref{lem:L2H1_phi}, 
\[
\|\bH_{\Sig,\tau}\|_{L^2(Q_T)^d}^2 = d \sum_{n=1}^N \tau \sum_{\sig=K|L\in\Sig} a_\sig \rho_\sig^n (\phi_K^n - \phi_L^n)^2 \leq d \cter{cte:L2H1_phi_1}, 
\]
so that there exists $\bH \in L^2(Q_T)^d$ such that, up to a subsequence, $\bH_{\Sig,\tau}$ tends to $\bH$ weakly in $L^2(Q_T)^d$.
But since $\sqrt{\rho_{\Sig,\tau}}$ converges strongly towards $\sqrt{\rhoe}$ in $L^2(Q_T)$, cf. Lemma~\ref{lem:rho_Sig}, 
and since $\bG_{\Sig,\tau}$ tends weakly towards $\grad (\log\rhoe +V)$ in $L^2(Q_T)^d$, we deduce that 
$\bH_{\Sig,\tau}$ tends weakly in $L^1(Q_T)^d$ towards $\sqrt{\rhoe} \grad (\log\rhoe +V) = 2 \grad \sqrt{\rhoe} + \sqrt{\rhoe}\grad V =\bH$.
In particular, $\sqrt{\rhoe}$ belongs to $L^2((0,T);H^1(\O))$. 
Now, we can pass in the limit $m\to+\infty$ in $\bF_{\Sig,\tau} = - \sqrt{\rho_{\Sig,\tau}}\bH_{\Sig,\tau}$, leading to the desired result. 
\end{proof}

In order to conclude the proof of Theorem~\ref{thm:conv}, it remains to check that 
any limit value $\rhoe$ of the scheme is a solution to the Fokker-Planck equation~\eqref{eq:FokkerPlanck} in the distributional sense. 

\begin{prop}\label{prop:identify}
Let $\rhoe$ be a limit value of $\left(\rho_{\Tt_m,\tau_m}\right)_{m\geq 1}$ as described in Section~\ref{ssec:compact}, then for all 
$\varphi\in C^\infty_c(\ov\O\times[0,T))$, one has 
\be\label{eq:weak_for}
\iint_{Q_T}\rhoe\p_t\varphi\d\x \d t + \int_\O \rhoe^0 \varphi(\cdot,0)\d\x - \iint_{Q_T}(\rhoe \grad V + \grad \rhoe)\cdot \grad \varphi\d\x\d t = 0. 
\ee
\end{prop}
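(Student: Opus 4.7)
The plan is to multiply~\eqref{eq:FP_cons} by a discretization of the test function $\varphi$, perform two discrete integrations by parts (one in time, one in space), and pass to the limit thanks to Lemma~\ref{prop:flux} and the compactness results of Section~\ref{ssec:compact}.

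Fix $\varphi \in C_c^\infty(\ov\O \times [0,T))$ and set $\varphi_K^n = \frac{1}{m_K}\int_K \varphi(\x, t^n)\d\x$; for $m$ large enough one has $\varphi_K^{N_m} = 0$. Multiplying~\eqref{eq:FP_cons} by $\varphi_K^{n-1}$, summing over $K\in\Tt_m$ and $n\in\{1,\dots,N_m\}$, and reorganizing the edge sums via the antisymmetry of the fluxes yields $\mathcal{T}_m + \mathcal{F}_m = 0$, with
\[
\mathcal{T}_m := \sum_{n=1}^{N_m}\sum_{K\in\Tt_m} m_K (\rho_K^n - \rho_K^{n-1})\varphi_K^{n-1}, \quad
\mathcal{F}_m := \sum_{n=1}^{N_m} \tau_m \sum_{\sig = K|L \in \Sig_m} a_\sig \rho_\sig^n(\phi_K^n - \phi_L^n)(\varphi_K^{n-1} - \varphi_L^{n-1}).
\]
An Abel summation in $n$, exploiting $\varphi_K^{N_m}=0$, recasts $\mathcal{T}_m$ as $-\iint_{Q_T}\rho_{\Tt_m,\tau_m}\psi_m \d\x\d t - \sum_K m_K \rho_K^0 \varphi_K^0$, where $\psi_m(\x,t) := (\varphi_K^n - \varphi_K^{n-1})/\tau_m$ on $K\times(t^{n-1},t^n]$. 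Smoothness of $\varphi$ and the mesh regularity~\eqref{eq:reg_mesh} ensure $\psi_m \to \p_t\varphi$ uniformly on $Q_T$; combined with the strong $L^1$ convergence~\eqref{eq:conv_rho_Tt} and the definition~\eqref{eq:rhoK0} of $\brho^0$, this gives $\mathcal{T}_m \to -\iint_{Q_T}\rhoe\,\p_t\varphi\d\x\d t - \int_\O \rhoe^0 \varphi(\cdot,0) \d\x$.

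For the flux term, the TPFA orthogonality $\x_L - \x_K = d_\sig \n_{K\sig}$ together with a Taylor expansion of $\varphi$ gives $\varphi_L^{n-1} - \varphi_K^{n-1} = d_\sig\, \n_{K\sig}\cdot \grad\varphi + O(h_{\Tt_m}^2)$ uniformly on $\Delta_\sig\times(t^{n-1},t^n]$. Using in addition $d\,m_{\Delta_\sig} = d_\sig m_\sig$ and the fact that $\bF_{\Sig_m,\tau_m}$ is parallel to $\n_{K\sig}$ on each diamond, a direct rearrangement yields $\mathcal{F}_m = -\iint_{Q_T} \bF_{\Sig_m, \tau_m} \cdot \grad\varphi \d\x\d t + \mathcal{R}_m$, where $\mathcal{R}_m$ is controlled by Cauchy-Schwarz together with Lemma~\ref{lem:L2H1_phi} and Lemma~\ref{lem:rho_Sig} by $|\mathcal{R}_m| = O(h_{\Tt_m})\to 0$. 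Pairing the weak $L^1$ convergence of $\bF_{\Sig_m,\tau_m}$ toward $-\grad\rhoe - \rhoe\grad V$ (Lemma~\ref{prop:flux}) with the fixed bounded function $\grad\varphi \in L^\infty$, we obtain $\mathcal{F}_m \to \iint_{Q_T}(\grad\rhoe + \rhoe\grad V)\cdot\grad\varphi \d\x\d t$. Combining the limits through $\mathcal{T}_m + \mathcal{F}_m = 0$ gives~\eqref{eq:weak_for}.

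The main obstacle is the flux term identification: since $\bF_{\Sig_m,\tau_m}$ converges only weakly, one cannot pass to the limit directly while keeping the discrete edge increments of $\varphi$. The strategy is to replace the latter by the continuous $\grad\varphi$ up to a vanishing error, thereby reducing to a pairing of a weakly convergent sequence against a fixed smooth function. This reduction relies crucially on the TPFA orthogonality built into the admissible mesh, which forces both the discrete flux and the relevant difference of $\varphi$ to live along the direction $\n_{K\sig}$ on each diamond, so that no information is lost in the projection.
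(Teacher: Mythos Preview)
Your overall strategy is the same as the paper's: multiply the discrete continuity equation by a discretized test function, perform an Abel summation in time, reorganize the edge sum, and compare the flux term with $\iint_{Q_T}\bF_{\Sig_m,\tau_m}\cdot\grad\varphi$ so as to invoke Lemma~\ref{prop:flux}. The treatment of the time term $\mathcal{T}_m$ and the final passage to the limit are fine.

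There is, however, a genuine gap in the flux remainder estimate. You set $\varphi_K^n=\frac{1}{m_K}\int_K\varphi(\cdot,t^n)\,\d\x$ (cell averages) and then claim
\[
\varphi_L^{n-1}-\varphi_K^{n-1}=d_\sig\,\n_{K\sig}\cdot\grad\varphi+O(h_{\Tt_m}^2).
\]
But the TPFA orthogonality $\x_L-\x_K=d_\sig\n_{K\sig}$ concerns the prescribed cell centers $\x_K$, which are \emph{not} in general the centroids $\bar\x_K=\frac{1}{m_K}\int_K\x\,\d\x$ (think of Vorono\"i cells, or of the regularity assumption~\eqref{eq:reg_2} which even allows $\x_K\notin K$). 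A Taylor expansion of the cell average yields $\varphi_K^{n-1}=\varphi(\bar\x_K,t^{n-1})+O(h_K^2)$, hence
\[
\varphi_L^{n-1}-\varphi_K^{n-1}=(\bar\x_L-\bar\x_K)\cdot\grad\varphi+O(h^2),
\]
and $\bar\x_L-\bar\x_K$ differs from $d_\sig\n_{K\sig}=\x_L-\x_K$ only by $O(h)$, not $O(h^2)$. After dividing by $d_\sig\sim h$ this leaves an $O(1)$ consistency defect in the normalized difference quotient, and the Cauchy--Schwarz bound you sketch (via Lemma~\ref{lem:L2H1_phi} and Lemma~\ref{lem:rho_Sig}) no longer forces $\mathcal{R}_m\to0$.

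The fix is immediate and is precisely what the paper does: take $\varphi_K^n=\varphi(\x_K,t^n)$, i.e.\ point values at the TPFA centers. (Note that the paper uses cell averages in Lemma~\ref{lem:drho_dt}, where only the crude bound $|\varphi_K^n-\varphi_L^n|\leq C d_\sig$ is needed, but switches to point values in the proof of Proposition~\ref{prop:identify} for exactly this reason.) With that choice one indeed has
\[
\left|\varphi_K^{n-1}-\varphi_L^{n-1}+\frac{1}{\tau m_{\Delta_\sig}}\int_{t^{n-1}}^{t^n}\!\!\int_{\Delta_\sig} d_\sig\,\grad\varphi\cdot\n_{K\sig}\,\d\x\d t\right|\leq C_\varphi\, d_\sig(\tau+d_\sig),
\]
and your bound $|\mathcal{R}_m|=O(h_{\Tt_m}+\tau_m)\to0$ follows as intended; the rest of the argument then goes through unchanged.
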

\begin{proof}
Given $\varphi\in C^\infty_c(\ov\O\times[0,T))$, we denote by $\varphi_K^n = \varphi(\x_K,t^n)$. Then multipying~\eqref{eq:FP_cons} by $-\varphi_K^{n-1}$ 
and summing over $K\in\Tt$ and $n\in\{1,\dots, N\}$ leads to 
\[
B_1 + B_2 + B_3= 0, 
\]
where we have set 
\[
B_1 = \sum_{n=1}^N \tau \sum_{K\in\Tt} m_K \frac{\varphi_K^n - \varphi_K^{n-1}}{\tau} \rho_K^n,  \qquad 
B_2 = \sum_{K\in\Tt} m_K \varphi_K^0 \rho_K^0, 
\]
and 
\[
B_3 = - \sum_{n=1}^N \tau \sum_{\sig = K|L \in \Sig} a_\sig \rho_\sig^n \left(\phi_K^n - \phi_L^n\right) \left( \varphi_K^{n-1} - \varphi_{L}^{n-1} \right).
\]
Since $\rho_{\Tt,\tau}$ converges in $L^1(Q_T)$ towards $\rhoe$, cf. Proposition~\ref{prop:compact}, and since $\varphi$ is smooth,
\[
B_1 \underset{m\to\infty}\longrightarrow \iint_{Q_T}\rhoe\p_t\varphi\d\x \d t.
\]
It follows from the definition~\eqref{eq:rhoK0} of $\rho_K^0$ that the piecewise constant function 
$\rho_\Tt^0$, defined by $\rho_\Tt^0(\x)=\rho_K^0$ if $\x\in\Tt$, converges in $L^1(\O)$ towards $\rhoe^0$. 
Therefore, since $\varphi$ is smooth, 
\[
B_2 \underset{m\to\infty}\longrightarrow \int_{\O}\rhoe^0\varphi(\cdot,0)\d\x.
\]
Let us define 
\[
B_3' = \iint_{Q_T} \bF_{\Sig,\tau}\cdot \grad \varphi \d\x\d t. 
\]
Then it follows from Lemma~\ref{prop:flux} that 
\[
B_3' \underset{m\to\infty}\longrightarrow - \iint_{Q_T}(\rhoe \grad V + \grad \rhoe)\cdot \grad \varphi\d\x\d t.
\]
To conclude the proof of Proposition~\ref{prop:identify}, it only remains to check that 
\[
\left|B_3 - B_3'\right| \leq \sum_{n=1}^N \tau \sum_{\sig = K|L \in \Sig} a_\sig \rho_\sig^n \left|\phi_K^n - \phi_L^n\right| 
 \left|  \varphi_K^{n-1} - \varphi_L^{n-1} + 
\frac{1}{\tau m_{\Delta_\sig}} \int_{t^{n-1}}^{t^n}\int_{\Delta_\sig} d_\sig \grad \varphi \cdot \n_{KL} \right|\d\x\d t.
\]
Since $\varphi$ is smooth and since $d_\sig \n_{KL} = \x_K - \x_L$ thanks to the orthogonality condition satisfied by the mesh, 
\[
 \left|  \varphi_K^{n-1} - \varphi_L^{n-1} + 
\frac{1}{\tau m_{\Delta_\sig}} \int_{t^{n-1}}^{t^n}\int_{\Delta_\sig} d_\sig \grad \varphi \cdot \n_{KL} \right|\d\x\d t
 \leq C_\varphi d_\sig (\tau+d_\sig)
\]
for some $C_\varphi$ depending only on $\varphi$. 
Therefore, 
\[
\left|B_3 - B_3'\right| \leq C_\varphi  (\tau+d_\sig)  \sum_{n=1}^N \tau \sum_{\sig = K|L \in \Sig} m_\sig \rho_\sig^n \left|\phi_K^n - \phi_L^n\right|.
\]
Applying Cauchy-Schwarz inequality, one gets that 
\[
\left|B_3 - B_3'\right| \leq C_\varphi  (\tau+d_\sig) \cter{cte:L2H1_phi_1} d \left\| \rho_{\Sig,\tau} \right\|_{L^1(Q_T)} \underset{m\to\infty}\longrightarrow 0
\]
thanks to Lemma~\ref{lem:rho_Sig}.
\end{proof}


\section{Numerical results}\label{sec:num}

To check the correctness and reliability of our formulation we performed some numerical tests. Before that, we are going to present some details on the solution of the nonlinear system involved in the scheme. 

\subsection{Newton method}

Due to the explicit formulation of the optimality condition of the saddle point problem (\ref{eq:LJKO_dual}), it appears extremely convient to use a Newton method for their solution. 
Given $\bu^{n-1}=(\bphi^{n-1},\brho^{n-1}) \in \R^{2\Tt}$ solution of the scheme at the time step $n-1$, the Newton method aims at constructing a sequence of approximations of $\bu^n$ as $\bu^{n,k+1} = \bu^{n,k} + \boldsymbol{d}^k$, $\boldsymbol{d}^k = (\boldsymbol{d}_{\bphi}^k,\boldsymbol{d}_{\brho}^k)$ being the Newton direction, solution to the block-structured system of equations
\begin{equation}\label{eq:Newton}
\bJ^{k}  \boldsymbol{d}^k = 
\begin{bmatrix}
\bJ_{\bphi,\bphi}^{k} & \bJ_{\bphi,\brho}^{k}  \\
\bJ_{\brho,\bphi}^{k} & \bJ_{\brho,\brho}^{k} \\
\end{bmatrix}
\begin{bmatrix}
\boldsymbol{d}_{\bphi}^k \\ 
\boldsymbol{d}_{\brho}^k \\
\end{bmatrix}
= \begin{bmatrix}
\bff_{\bphi}^k \\ 
\bff_{\brho}^k \\
\end{bmatrix}.
\end{equation}
In the above linear system,
$\bff_{\bphi}^k$ and $\bff_{\brho}^k$ are the discrete HJ and continuity equations evaluated in $\bu^{n,k}$, and 
$\bJ_{\bphi,\bphi}^k$, $\bJ_{\bphi,\brho}^k$, $\bJ_{\brho,\bphi}^k$ and $\bJ_{\brho,\brho}^k$ are the four blocks of the Hessian matrix $\bJ^k$ of the discrete functional in (\ref{eq:LJKO_dual}) evaluated in $\bu^{n,k}$. The sequence converges to the unique solution $\bu^n$ as soon as the initial guess is sufficiently close to it, which is ensured for a sufficiently small time step by taking $\bu_0^n=\bu^{n-1}$.
The algorithm stops when the $\ell^{\infty}$ norm of the discrete equations is smaller than a prescribed tolerance or if the maximum number of iterations is reached. It is possible to implement an adaptative time stepping: if the Newton method converges in few iterations the time step $\tau$ increases; if it reaches the maximum number of iterations the time step is decreased and the method restarted. Issues could arise if the iterate $\bu^{n,k}$ reaches negative values, especially if the energy is not defined for negative densities. To avoid this problem two possible strategies may be implemented: the iterate may be projected on the set of positive measure by taking $\bu^{n,k} = (\bu^{n,k})^+$; the method may be restarted with a smaller time step.

In case of a local energy functional, as it is the case for the Fokker-Planck and many more examples, the block $\bJ_{\brho,\brho}^k$ is diagonal and therefore straightforward to invert. System (\ref{eq:Newton}) can be rewritten in term of the Schur complement and solved for $\boldsymbol{d}_{\bphi}^k$ as
\begin{equation}\label{eq:schur}
\begin{bmatrix}
\bJ_{\bphi,\bphi}^k -    \bJ_{\bphi,\brho}^k\, (\bJ_{\brho,\brho}^k)^{-1} \,\bJ_{\brho,\bphi}^k
\end{bmatrix}
\boldsymbol{d}_{\bphi}^k = \bff_{\bphi}^k - \bJ_{\bphi,\brho}^k \, (\bJ_{\brho,\brho}^k)^{-1} \, \bff_{\brho}^k,
\end{equation}
while $\boldsymbol{d}_{\brho}^k = (\bJ_{\brho,\brho}^k)^{-1} \, (\bff_{\brho}^k - \bJ_{\brho,\bphi}^k \, \boldsymbol{d}_{\bphi}^k )$.
\begin{prop}
The Schur complement $\bS^k = \bJ_{\bphi,\bphi}^k -  \bJ_{\bphi,\brho}^k \, (\bJ_{\brho,\brho}^k)^{-1} \, \bJ_{\brho,\bphi}^k$ is symmetric and negative definite.
\end{prop}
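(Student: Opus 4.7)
The plan is to work directly from the convex-concave structure of the saddle point problem~\eqref{eq:LJKO_dual}, viewing $\bJ^k$ as the Hessian at the iterate $\bu^{n,k}$ of the Lagrangian
\[
L(\brho,\bphi) := \langle \brho^{n-1}-\brho,\bphi\rangle_\Tt - \frac{\tau}{2}\sum_{\sig=K|L\in\Sig} a_\sig \rho_\sig (\phi_K-\phi_L)^2 + \Ee_\Tt(\brho).
\]
Symmetry of $\bS^k$ is then immediate: since $L$ is $C^2$ near $\bu^{n,k}$, Schwarz's theorem gives $(\bJ_{\brho,\brho}^k)^T=\bJ_{\brho,\brho}^k$, $(\bJ_{\bphi,\bphi}^k)^T=\bJ_{\bphi,\bphi}^k$ and $(\bJ_{\brho,\bphi}^k)^T=\bJ_{\bphi,\brho}^k$, so a direct transposition of the defining expression of $\bS^k$ returns $\bS^k$.

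For the sign, I would observe that $L$ is strictly convex in $\brho$ (because $\Ee_\Tt$ is strictly convex, while the remaining $\brho$-dependent pieces are linear in $\brho$) and concave in $\bphi$ (an affine term plus the non-positive quadratic form $-\frac{\tau}{2}\sum a_\sig \rho_\sig(\phi_K-\phi_L)^2$, concave since $\rho_\sig\geq 0$). Hence $\bJ_{\brho,\brho}^k$ is symmetric positive definite (so $\bS^k$ is well-defined) and $\bJ_{\bphi,\bphi}^k$ is symmetric negative semidefinite. The standard identity
\[
\bd_\bphi^T \bS^k \bd_\bphi = \bd_\bphi^T \bJ_{\bphi,\bphi}^k \bd_\bphi - (\bJ_{\brho,\bphi}^k \bd_\bphi)^T (\bJ_{\brho,\brho}^k)^{-1} (\bJ_{\brho,\bphi}^k \bd_\bphi), \qquad \forall \bd_\bphi \in \R^\Tt,
\]
then exhibits $\bS^k$ as a sum of two non-positive quadratic forms, yielding $\bS^k\leq 0$. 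Conceptually, this is the Hessian at $\bphi^{n,k}$ of the concave reduced functional $F(\bphi):=\inf_\brho L(\brho,\bphi)$ obtained by eliminating $\brho$.

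Strict definiteness is the interesting step. The equality $\bd_\bphi^T\bS^k\bd_\bphi=0$ would require simultaneously $\bJ_{\brho,\bphi}^k\bd_\bphi=0$ and $\bd_\bphi^T\bJ_{\bphi,\bphi}^k\bd_\bphi=0$. The second condition forces $\bd_\bphi$ to be locally constant across every edge of positive mobility $\rho_\sig^{n,k}>0$ (it lies in the kernel of the weighted graph Laplacian $-\bJ_{\bphi,\bphi}^k/\tau$). Isolating in $L$ the linear cross term $-\langle\brho,\bphi\rangle_\Tt$ from the upwind quadratic, a short calculation yields, with $\Nn_J^\uparrow := \{L\in\Nn_J:\phi_J^{n,k}>\phi_L^{n,k}\}$,
\[
(\bJ_{\brho,\bphi}^k \bd_\bphi)_J = -m_J (\bd_\bphi)_J + \tau\sum_{L\in\Nn_J^\uparrow} a_{JL}(\phi_J^{n,k}-\phi_L^{n,k})\bigl[(\bd_\bphi)_L-(\bd_\bphi)_J\bigr].
\]
For any cell $J$ with $\rho_J^{n,k}>0$, the upwind rule makes every edge $\sig=J|L$ with $L\in\Nn_J^\uparrow$ carry mobility $\rho_\sig^{n,k}=\rho_J^{n,k}>0$, so the bracket already vanishes by the Laplacian kernel analysis; the equation $(\bJ_{\brho,\bphi}^k\bd_\bphi)_J=0$ then collapses to $m_J(\bd_\bphi)_J=0$ and forces $(\bd_\bphi)_J=0$.

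The main obstacle is to close the argument on the vacuum set $\{J:\rho_J^{n,k}=0\}$, where the Laplacian kernel does not constrain $\bd_\bphi$ across zero-mobility edges and the bracket above can fail to vanish. One would handle this by propagating the conclusion $\bd_\bphi=0$ iteratively from the positive-density cells into the vacuum region via the diagonal mass contribution $-\bM=-\mathrm{diag}(m_K)$ in $\bJ_{\brho,\bphi}^k$, which is not affected by mobility degeneracy. In the regime in which the Newton scheme is meaningfully applied -- either strictly inside the positive cone $\brho^{n,k}>\0$ (forced, for instance, by Lemma~\ref{lem:rho_pos} in the Fokker-Planck setting), or after the safeguard projection described just before the proposition -- this degeneracy does not arise and the previous step alone suffices to conclude that $\bS^k$ is symmetric negative definite.
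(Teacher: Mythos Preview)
Your symmetry and semi-definiteness arguments coincide with the paper's. The divergence is in how strict definiteness is obtained. The paper does not analyse the kernel of $\bS^k$ at all: once $\bJ_{\brho,\brho}^k>0$ and $\bJ_{\bphi,\bphi}^k\leq 0$, it simply shows that the cross block $\bJ_{\bphi,\brho}^k=(\bJ_{\brho,\bphi}^k)^T$ is \emph{invertible}, so that $\bJ_{\bphi,\brho}^k(\bJ_{\brho,\brho}^k)^{-1}\bJ_{\brho,\bphi}^k$ is strictly positive definite and $\bS^k<0$ follows immediately, with no positivity assumption on $\brho^{n,k}$ and no case analysis on the vacuum set. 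The invertibility is obtained by writing $\bJ_{\bphi,\brho}^k=\bM+\bA^k$, where $\bM=\mathrm{diag}(m_K)$ and $\bA^k$ is a Z-matrix whose columns sum to zero; hence $\bJ_{\bphi,\brho}^k$ is a column M-matrix.

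You are in fact one line away from this argument and do not need your detour through $\bd_\bphi^T\bJ_{\bphi,\bphi}^k\bd_\bphi=0$. Your own formula for $(\bJ_{\brho,\bphi}^k\bd_\bphi)_J$ shows that $-\bJ_{\brho,\bphi}^k$ has diagonal entries $m_J+\tau\sum_{L\in\Nn_J^\uparrow}a_{JL}(\phi_J^{n,k}-\phi_L^{n,k})$, non-positive off-diagonal entries, and row sums equal to $m_J>0$; it is therefore strictly row diagonally dominant and invertible, regardless of whether $\brho^{n,k}$ vanishes on some cells. This single observation replaces your Laplacian-kernel step, closes the vacuum gap you flag, and makes the appeal to the ``regime in which the Newton scheme is meaningfully applied'' unnecessary. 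As written, your proof is complete only under the extra hypothesis $\brho^{n,k}>\0$, whereas the paper establishes the proposition for all $\brho^{n,k}\geq\0$.
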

\begin{proof}
$\bS^k$ is symmetric since $\bJ_{\bphi,\bphi}^k$ and $\bJ_{\brho,\brho}^k$ are, while $\bJ_{\bphi,\brho}^k=(\bJ_{\brho,\bphi}^k)^T$. The matrix 
$\bJ_{\brho,\brho}^k$ is positive definite since the problem is strictly convex, whereas $\bJ_{\bphi,\bphi}^k$ is negative definite 
if $\rho_K^{n,k} > 0 , \forall K \in \Tt$, since the problem is strictly concave, but it is semi-negative definite if the density vanishes somewhere. Therefore, it is sufficient to show that the matrix  $\bJ_{\bphi,\brho}^k = (\bJ_{\brho,\bphi}^k)^T= \bM + \bA^k$ is invertible. $\bM$ is a diagonal matrix such that $(\bM)_{K,K}=m_K$, whereas
\[
(\bA^k)_{K,K} = \tau \sum_{\sigma = K|L \in \Sigma_K} a_{\sigma} (\phi_K^{n,k}-\phi_L^{n,k})^+ \geq 0,
\]
and, for $L\neq K$,
\[
(\bA^k)_{K,L} =
-\tau a_{\sigma} (\phi_L^{n,k}-\phi_K^n)^+ \leq 0 \quad \text{if}\; \sig = K|L, \qquad (\bA^k)_{K,L} = 0 \quad \text{otherwise}.
\]
Therefore the columns of $\bA^k$ sum up to $0$, so that  $(\bJ_{\bphi,\brho}^k)$ is a column M-matrix \cite{Fuhrmann01} and thus invertible.
\end{proof}
In case the matrix $\bJ_{\brho,\brho}^k$ is simple to invert it is then possible to decrease the computational complexity of the solution of system \eqref{eq:Newton}. Moreover, it is possible to exploit for the solution of system \eqref{eq:schur} symmetric solvers, which are computationally more efficient than non-symmetric ones.

\subsection{Fokker-Planck equation}

We first tackle the gradient flow of the Fokker-Planck energy, namely \cref{eq:FokkerPlanck}. In \cref{sec:conv} we showed the $L^1$ convergence of the scheme. Consider the specific potential $\rho V(\mathbf{x})=-\rho gx$: for this case it is possible to design an analytical solution and test the convergence of the scheme. Consider the domain $\Omega=[0,1]^2$, the time interval $[0,0.25]$ and the following analytical solution of the Fokker-Planck equation (built from a one-dimensional one):
\[
\rho(x,y,t) = \exp(-\alpha t+\frac{g}{2}x)(\pi \cos(\pi x)+\frac{g}{2}sin(\pi x))+\pi \exp(g(x-\frac{1}{2})),
\]
where $\alpha=\pi^2+\frac{g^2}{4}$. On the domain $\Omega=[0,1]^2$, the function $\rho(x,y,t)$ is positive and satisfies the mixed boundary conditions $(\nabla \rho + \rho \nabla V) \cdot \n |_{\partial\Omega} = 0$. We want to exploit the knowledge of this exact solution to compute the error we commit in the spatial and time integration. Consider a sequence of meshes $\left(\Tt_m, \ov \Sigma_m, {(\x_K)}_{K\in\Tt_m}\right)$ with decreasing mesh size $h_{\mathcal{T}_m}$ and a sequence of decreasing time steps $\tau_m$ such that $\frac{h_{\mathcal{T}_{m+1}}}{h_{\mathcal{T}_m}}=\frac{\tau_{m+1}}{\tau_m}$. In particular, we used a sequence of Delaunay triangular meshes such that the mesh size halves at each step, obtained subdividing at each step each triangle into four using the edges midpoints. Three subsequent partitioning of the domain are shown in figure \ref{fig:mesh}.
\begin{figure}
	\centering
	\subfloat{\includegraphics[width=0.33\textwidth]{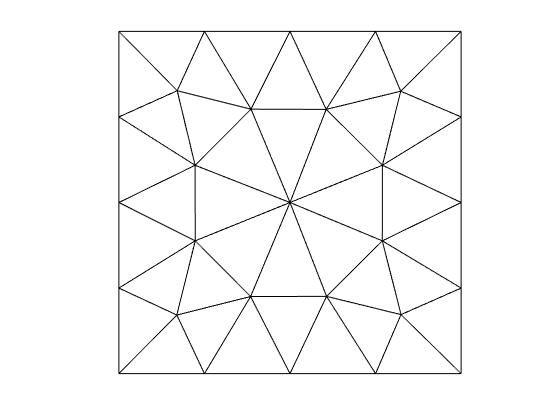}}
	\subfloat{\includegraphics[width=0.33\textwidth]{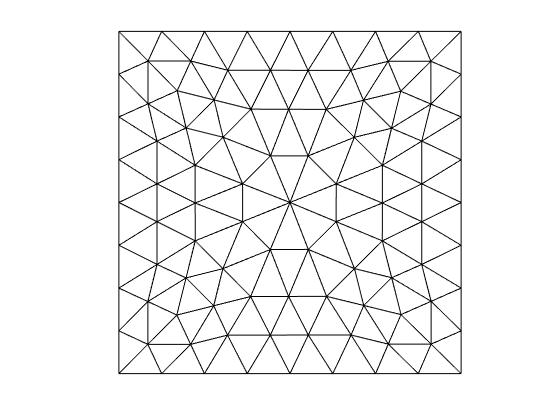}}
	\subfloat{\includegraphics[width=0.33\textwidth]{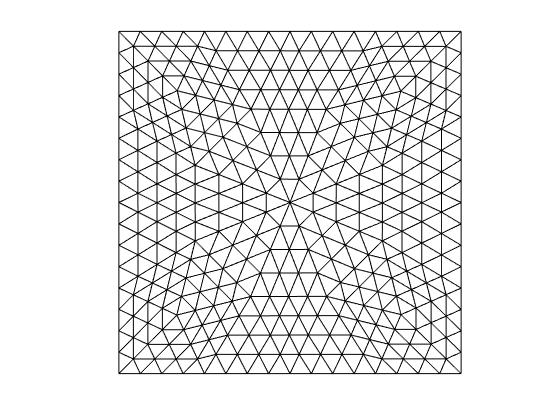}}
	\caption{Sequence of regular triangular meshes.}
	\label{fig:mesh}
\end{figure}
Let us introduce the following mesh-dependent errors:
\[
\begin{gathered}
\epsilon_1^n = \sum_{K \in \Tt_m} |\rho_K^n-\rho(\x_K,n \tau)| m_K, \quad \rightarrow \quad \text{discrete $L^1$ error}\\
\epsilon_{L^{\infty}} = \max_{n} (\epsilon^1_n), \quad \rightarrow \quad \text{discrete $L^{\infty}((0,T);L^1(\Omega))$ error}, \\
\epsilon_{L^1} = \sum_{n} \tau \, \epsilon_1^n, \quad \rightarrow \quad \text{discrete $L^1((0,T);L^1(\Omega))$ error},
\end{gathered}
\]
where $\rho(\x_K,n \tau_m)$ is the value in the cell center of the triangle $K$ of the analytical solution at time $n \tau_m$, $n$ running from 0 to the total number of time steps $N_m$. The upstream Finite Volume scheme with implicit Euler discretization of the temporal derivative is known to exhibit order one of convergence applied to this problem, both in time and space. This means that the $L^{\infty}((0,T);L^1(\Omega))$ and $L^1((0,T);L^1(\Omega))$ errors halve whenever $h_{\Tt}$ and $\tau$ halve. We want to inspect whether scheme (\ref{LJKOh}) recovers the same behavior.

For the sequence of meshes and time steps, for $m$ going from one to the total number of meshes, we computed the solution to the linear Fokker-Planck equations and the errors, using both the Finite Volume scheme and scheme (\ref{LJKOh}). The results are shown in Table \ref{tab:errors00}. For each mesh size and time step $m$, it is represented the error together with the rate with respect to the previous one. Scheme (\ref{LJKOh}) exhibits the same order of convergence of the FV scheme. It is noticeable that the rate of convergence of scheme (\ref{LJKOh}) senses a big drop and then recovers order one, especially in the $L^{\infty}((0,T);L^1(\Omega))$ error. This is due to the fact that the initial condition $\rho(\x_K,0)$ is too close to zero, and in particular equal to zero on the set ${1}\times [0,1]$, and scheme (\ref{LJKOh}) tends to be repulsed away from zero due to the singularity of the gradient of the first variation of the energy. In Table \ref{tab:errors05} we repeated the convergence test for the time interval $[0.05,0.25]$: the convergence profile sensibly improves.
\begin{table}
	\centering
	\caption{Time-space convergence for the two schemes. Integration on the time step $[0,0.25].$}
	\label{tab:errors00}
	\begin{tabular}{cccccccccc}
		\toprule
		& & \multicolumn{4}{c}{FV} & \multicolumn{4}{c}{LJKO} \\
		\hline
		h & dt & $\epsilon_{L^{\infty}}$ & $r$ & $\epsilon_{L^1}$ & $r$ & $\epsilon_{L^{\infty}}$ & $r$ & $\epsilon_{L^1}$ & $r$ \\
		\midrule
		0.2986 &   0.0500 &   0.1634 &  /  &  0.0350   & /   &  0.1463    &     /  &  0.0334   &      / \\
		0.1493  &  0.0250 &  0.0856  &  0.932   &  0.0176  &  0.997   & 0.0651  & 1.169  &  0.0145  &  1.120 \\
		0.0747  &  0.0125 &   0.0434 &   0.979 &   0.0087   & 1.015   &  0.0449 &   0.535 &   0.0066  &  1.134 \\
		0.0373  &  0.0063 &   0.0218 &   0.996 &   0.0043   & 1.009 &  0.0297 &   0.598  &  0.0033 &   1.007 \\
		0.0187  &  0.0031 &   0.0109  &  0.999  & 0.0022  &  1.004  &  0.0174 &   0.770  &  0.0017 &   0.943 \\
		0.0093 & 0.0016  &  0.0054 &   1.000  &  0.0011  &  1.001  & 0.0095  &  0.870  &  0.0009  &  0.947 \\
		\bottomrule
	\end{tabular}
\end{table}
\begin{table}
	\centering
	\caption{Time-space convergence for scheme (\ref{LJKOh}). Integration on the time step $[0.5,0.25].$}
	\label{tab:errors05}
	\begin{tabular}{cccccc}
		\toprule
		& & \multicolumn{4}{c}{LJKO} \\
		\hline
		h & dt & $\epsilon_{L^{\infty}}$ & $r$ & $\epsilon_{L^1}$ & $r$ \\
		\midrule
		0.2986  &  0.0500 &   0.1186  &       /   & 0.0216    &     / \\
    		0.1493  &  0.0250 &   0.0618  &  0.9411  &  0.0109  &  0.9857 \\
   		0.0747  &  0.0125 &   0.0307  &  1.0110  &  0.0053  &  1.0311 \\
    		0.0373  &  0.0063 &   0.0152  &  1.0116  &  0.0026  &  1.0213 \\
    		0.0187  &  0.0031 &   0.0076  &  1.0078 &   0.0013  &  1.0119 \\
   		0.0093  &  0.0016 &   0.0038  &  1.0042 &   0.0006  &  1.0062 \\
		\bottomrule
	\end{tabular}
\end{table}

To further investigate and compare the behavior of scheme (\ref{LJKOh}) with the FV, we computed also the energy decay along the trajectory. We call dissipation the difference $\mathcal{E}(\rho)-\mathcal{E}(\rho^{\infty})$, where $\rho^{\infty}$ is the final equilibrium condition, the long time behavior. Since we are discretizing a gradient flow, its dissipation is a useful criteria to assess the goodness of the scheme.
The long time value of the energy is equal to:
\[
\begin{aligned}
\mathcal{E}(\lim_{t\rightarrow \infty} \rho) &=  \int_{\Omega} \lim_{t\rightarrow \infty} (\rho \log \rho - \rho g x) \d\x \\
&= \exp(\frac{g}{2})(\frac{\pi\log(\pi)}{g}+\frac{\pi}{2}-\frac{\pi}{g})+\exp(-\frac{g}{2})(-\frac{\pi \log(\pi)}{g}-\frac{\pi}{2}+\frac{\pi}{g}).
\end{aligned}
\]
It is possible to define the equilibrium solution also on the discrete dynamics on the grid. Namely, the equilibrium solution for the dynamic defined on the grid is
\[
\rho_K^{\infty} = M\exp(-V_K), \,\, V_K = V(\x_K),
\]
as it can be easily checked to be the unique minimizer of the discrete energy $\mathcal{E}_h= \sum_{K\in \mathcal{T}} E(\rho_K) m_K$ subject to the constraint of the conservation of the mass,
\[
\begin{gathered}
\frac{\partial }{\partial \rho_K} \big( \mathcal{E}_h+\lambda \sum_{K\in \mathcal{T}} (\rho_K -\rho_K^0)m_K \big)|_{\rho_K^{\infty}} = \big( \log \rho_K^{\infty} + 1 + V_K + \lambda \big) m_K = 0, \quad \forall K \in \mathcal{T} \\
\implies \quad \rho_K^{\infty} = \exp (-(1+\lambda)-V_K) = M\exp(-V_K), \quad \forall K \in \mathcal{T} ,
\end{gathered}
\]
with $\lambda$ lagrange multiplier associated with the constraint. $M$ is the constant that makes $(\rho_K^{\infty})_{K\in \mathcal{T}}$ have the same total mass:
\[
M = \frac{\sum_{K\in \mathcal{T}_h}\rho_K^0 m_K}{\sum_{K\in \mathcal{T}_h} \exp^{-V_K} m_K}.
\]
It is immediate to observe that this is indeed the equilibrium solution in the FV scheme, since with such density the potential in \eqref{eq:FV} is constant:
\[
\phi_K = \frac{\delta \mathcal{E}_h(\rho)}{\delta \rho_K}|_{\rho_K^{\infty}} = \log \rho_K^{\infty} +1 +V_K = \log M -V_K +1 +V_K = \log M +1, \quad \forall K \in \mathcal{T}.
\]
For the scheme (\ref{LJKOh}) instead, as it appears clear from \Cref{lem:Legendre}, whenever $\rho_K^n=\rho_K^{n-1}, \forall K\in \mathcal{T}$, as it is the case for an equilibrium solution, the potential is constant. From the potential equation one gets again
\[
\phi_K = \frac{\delta \mathcal{E}_h(\rho)}{\delta \rho_K}|_{\rho_K^{\infty}} = \log M +1, \forall K \in \mathcal{T}.
\]

In \Cref{fig:dissipation} it is represented the semilog plot of the dissipation of the system, computed for the scheme (\ref{LJKOh}) and the FV one, $\mathcal{E}^h(\rho_K)-\mathcal{E}^h(\rho_K^{\infty})$, and the real solution, $\mathcal{E}(\rho)-\mathcal{E}(\rho^{\infty})$.
In \Cref{fig:test1} it is noticeable that scheme (\ref{LJKOh}) scheme dissipates the energy faster than the FV one, being indeed a bit more diffusive. This is an expected behavior since the scheme is built to maximize the decrease of the energy and this is actually one of the main strength of the approach. 
In \Cref{fig:test2}, one can see that the two dissipations tends to the real one when finer mesh and smaller time step are used, for both schemes, despite the fact that (\ref{LJKOh}) still dissipates faster. In the end, in \Cref{fig:test3} it is remarkable that for a very small time step the dissipation of the two schemes tends to coincide, as it is expected. For the time parameter going to zero the two schemes coincide. 

\begin{figure}
	\centering
	\subfloat[][$T=3, \tau =0.01, h=0.1493$. \emph{Dissipation over the time interval $[0,T]$ and detail.}]
	{\includegraphics[width=0.5\textwidth]{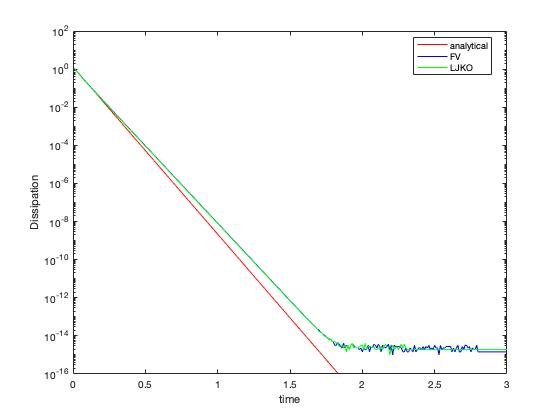}\includegraphics[width=0.5\textwidth]{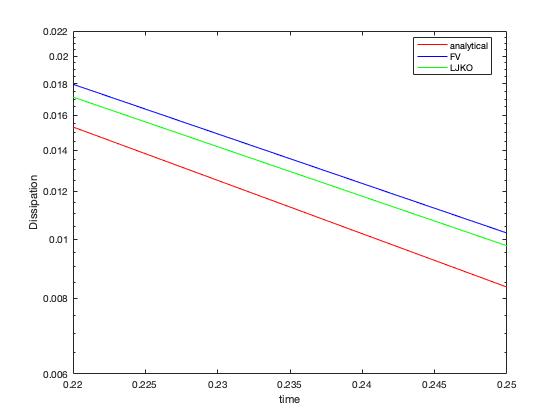}\label{fig:test1}} \\
	\subfloat[][$T=3, \tau =0.0063, h=0.0373$. \emph{Dissipation over the time interval $[0,T]$ and detail.}]
	{\includegraphics[width=0.5\textwidth]{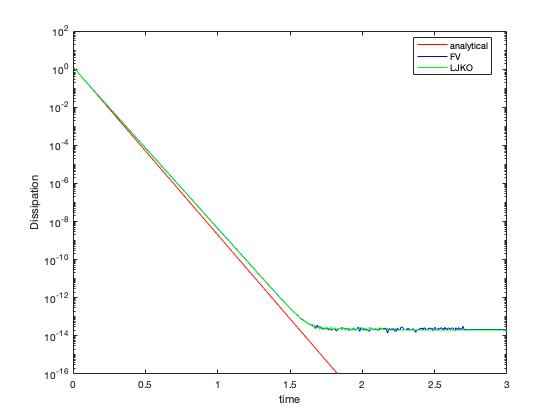}\includegraphics[width=0.5\textwidth]{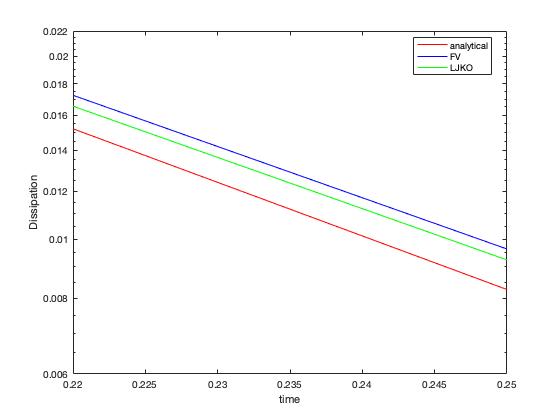}\label{fig:test2}} \\
	\subfloat[][$T=3, \tau =0.0001, h=0.1493$. \emph{Dissipation over the time interval $[0,T]$ and detail.}]
	{\includegraphics[width=0.5\textwidth]{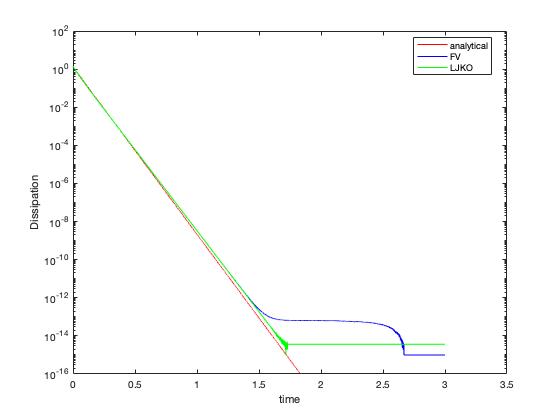}\includegraphics[width=0.5\textwidth]{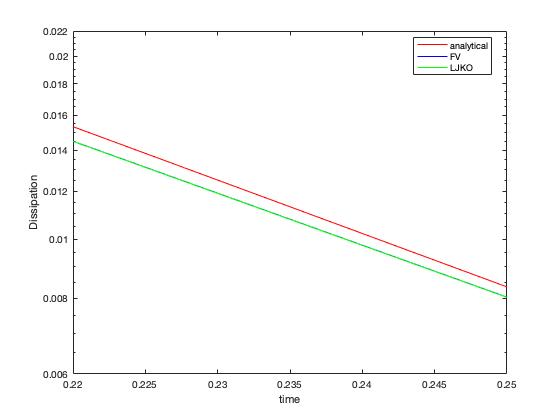}\label{fig:test3}}
	\caption{Comparison of the dissipation of the system computed with the two numerical schemes and in the real case. Semi-logarithmic plot.}
	\label{fig:dissipation}
\end{figure}

\subsection{Porous medium equation}

 Consider again the domain $\Omega=[0,1]^2$. The porous medium equation,
\[
\partial_t \rho = \Delta \rho^m + \div (\rho \grad V),
\]
has been proven in \cite{Otto01} to be a gradient flow in Wasserstein space with respect to the energy
\[
\mathcal{E}(\rho) = \int_{\O} \frac{1}{m-1} \rho^m + \rho V,
\]
for a given $m$ strictly greater than one. Our aim is to show that scheme (\ref{LJKOh}) works regardless of the positivity assumption on the initial measure $\rho_0$. For this reason, we use a confining potential $V(\x) = \frac{1}{2} || \x-0.5 ||^2_2$. The equilibrium solution of the gradient flow should then be the Barenblatt profile $\rho^{\infty}(\x) = \big( \frac{m-1}{2m} \max(1-||\x||^2) \big)^{\frac{1}{m-1}}$.

In \Cref{fig:porous_medium} the evolution of an initial density with compact support is shown for the case $m=4$. As expected, the solution converges towards the Barenblatt profile.

\begin{figure}
	\centering
	\graphicspath{{Figures/porous_medium/}}
	\subfloat[][t=0]{\includegraphics[width=0.5\textwidth]{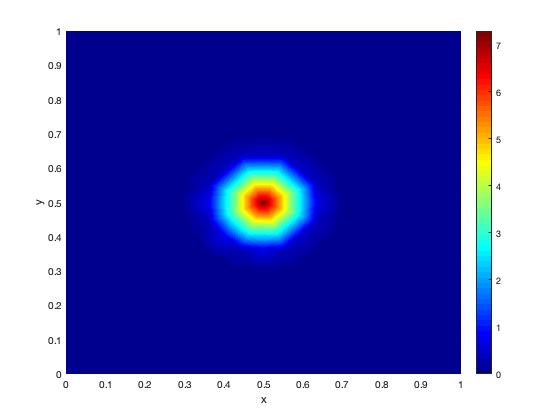}}
	\subfloat[][t=0.001]{\includegraphics[width=0.5\textwidth]{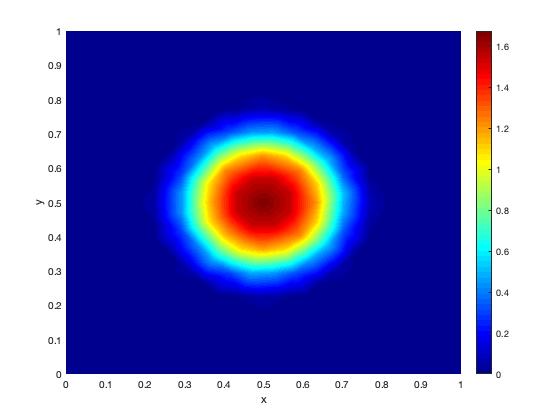}} \\
	\subfloat[][t=0.01]{\includegraphics[width=0.5\textwidth]{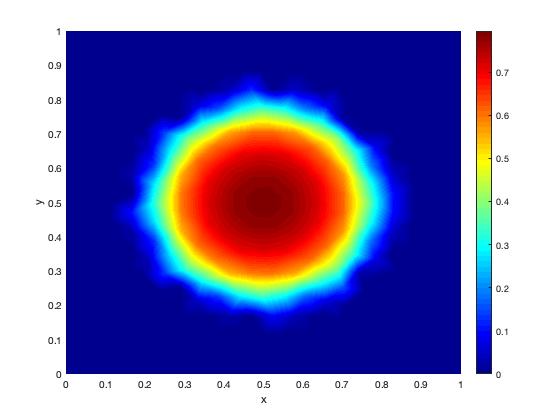}}
	\subfloat[][t=1]{\includegraphics[width=0.5\textwidth]{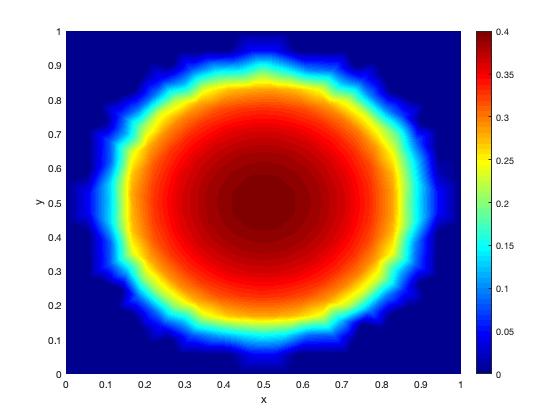}} \\
	\caption{Evolution of an initial density close to a dirac. In each picture the scaling is different for the sake of the representation.}
	\label{fig:porous_medium}
\end{figure}

\subsection{Salinity intrusion problem}

We want to show now that scheme (\ref{LJKOh}) can be used for the solutions of systems of equations of the type of \cref{eq:pde}.
We consider the problem of salinity intrusion in an unconfined aquifer. Under the assumption that the two fluids, the fresh and the salty water, are immiscible and the domains occupied by each fluid are separated by a sharp interface, the problem can be modeled via the system of equations
\be\label{eq:intrusion}
\begin{cases}
\partial_t f - \div (\nu f \grad(f+g+b)) = 0 \quad \text{in} \, \O \times (0,T) \\
\partial_t g - \div (g \grad(\nu f+g+b)) = 0 \quad \text{in} \, \O \times (0,T) \\
\end{cases}
\ee
completed with the no-flux boundary conditions
\[
\grad f \cdot \n = \grad g \cdot \n = 0 \quad \text{on} \,  \partial \O \times (0,T),
\]
and initial conditions $f(t=0) = f_0, g(t=0) = g_0$, with $f_0,g_0 \in L^{\infty}(\O), f_0,g_0 \ge 0$. The quantities $f$, $g$, and $b$ represent respectively the thickness of the 
fresh layer of water, the thickness of the salty water layer and the height of the bedrock. Therefore the quantity $b+g$ represents the height of the sharp interface 
separating the two fluids. The parameter $\nu = \frac{\rho_f}{\rho_s}$ is the ratio between the constant mass density of the fresh and salt water. 
\Cref{eq:intrusion} has been proven in \cite{LM13_Muskat} to be a Wasserstein gradient flow with respect to the energy
\[
\mathcal{E}(f,g) = \int_{\O} \Big( \frac{\nu}{2} (f+g+b)^2 + \frac{1-\nu}{2} (g+b)^2 \Big) \d \x.
\]

\begin{figure}
	\centering
	\subfloat[][t=0]{\includegraphics[width=0.5\textwidth]{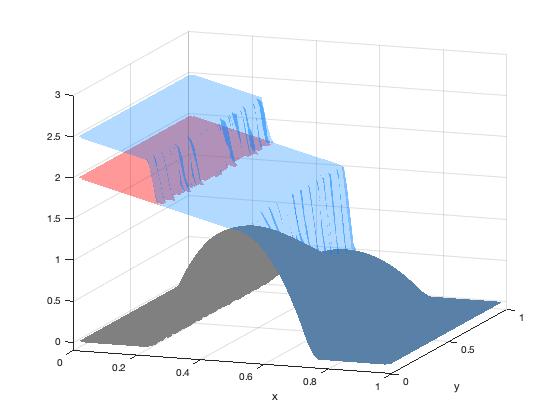}}
	\subfloat[][t=0.1]{\includegraphics[width=0.5\textwidth]{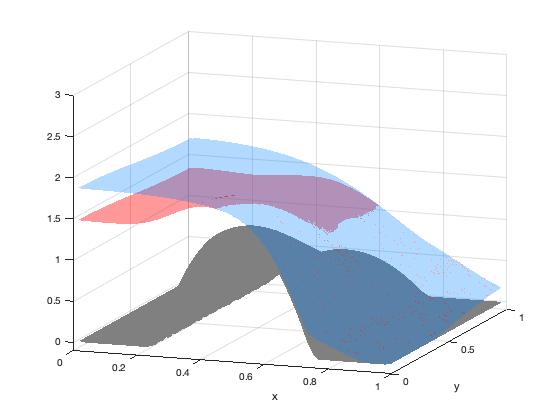}} \\
	\subfloat[][t=0.5]{\includegraphics[width=0.5\textwidth]{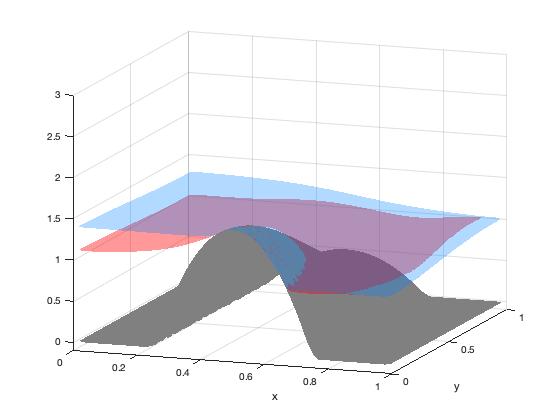}}
	\subfloat[][t=10]{\includegraphics[width=0.5\textwidth]{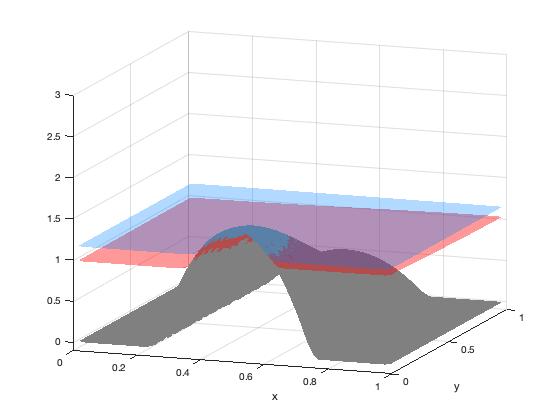}} \\
	\caption{Evolution of the two interfaces of salt (red) and fresh (blue) water.}
	\label{fig:salinity_intrusion}
\end{figure}
\Cref{fig:salinity_intrusion} represents the bedrock $b$ and the evolution of the surfaces of salt water, $b+g$, and of fresh water, $b+g+f$ (see~\cite{Ahmed_intrusion} for a full description of the test case).
Also this case is not covered from the theoretical analysis we performed on the convergence of the scheme but still scheme (\ref{LJKOh}) works. As already said, from numerical evidences the scheme works under much more general and mild hypotheses. \\


{\bf Acknowledgements.} 
CC acknowledges the support of the Labex CEMPI (ANR-11-LABX-0007-01).
GT acknowledges that this project has received funding
from the European Union’s Horizon 2020 research and innovation
programme under the Marie Skłodowska-Curie grant agreement No 754362.
We also thanks Guillaume Carlier and Quentin M\'erigot for fruitful discussions.
\begin{center}
\includegraphics[width=0.2\textwidth]{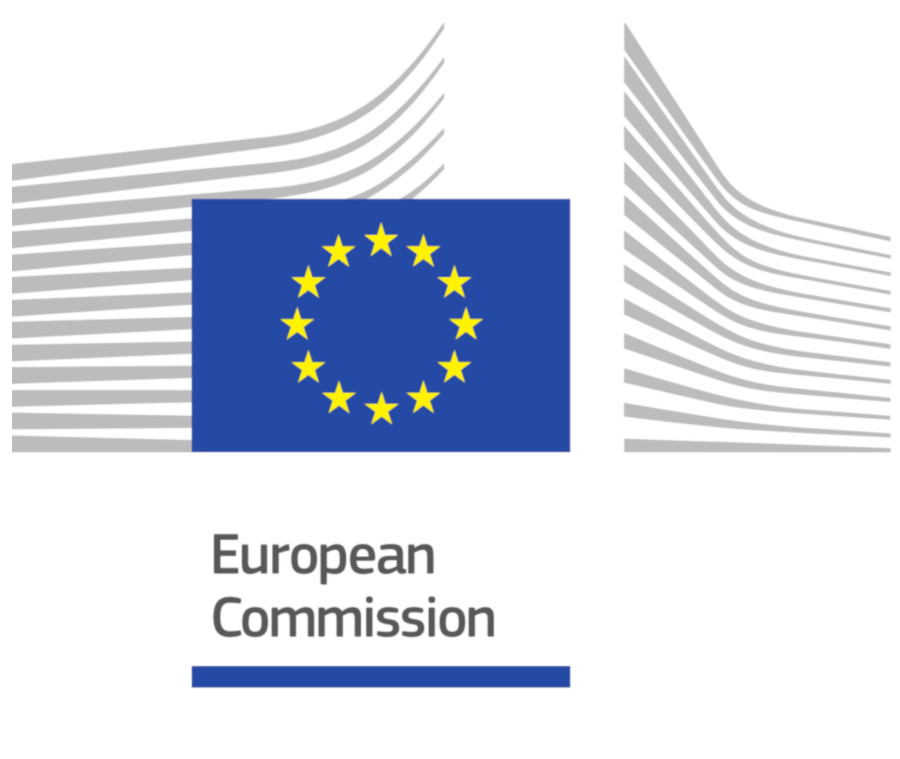}
\end{center}


\end{document}